\newcommand{\N}{\mathbb{N}}
\newcommand{\Q}{\mathbb{Q}}
\newcommand{\R}{\mathbb{R}}
\newcommand{\C}{\mathbb{C}}
\newcommand{\E}{\mathbb{E}}
\newcommand{\rme}{{\rm e}}
\newcommand{\rmi}{{\rm i}}
\newcommand{\rmC}{{\rm C}}
\newcommand{\rmD}{{\rm D}}
\newcommand{\1}{\mathds{1}}
\newcommand{\Fc}{\mathcal{F}}
\newcommand{\Gc}{\mathcal{G}}
\newcommand{\Mc}{\mathcal{M}}
\newcommand{\Kc}{\mathcal{K}}
\newcommand{\Uc}{\mathcal{U}}
\newcommand{\Mcons}{\Mc_{\rm c}}
\newcommand{\vect}{{\rm vect}}
\renewcommand{\mid}{~\middle | ~}
\newcommand{\eps}{\varepsilon}
\newcommand{\mycdot}{\cdot}
\newcommand{\cv}[2][]{\underset{#2}{\overset{#1}{\longrightarrow}}}
\newcommand\quotient[2]{
        \mathchoice
            {
                \text{\raise1ex\hbox{$#1$}\Big/\lower1ex\hbox{$#2$}}%
            }
            {
                #1\,/\,#2
            }
            {
                #1\,/\,#2
            }
            {
                #1\,/\,#2
            }
}
\newcommand{\cadlag}{cadlag }
\renewcommand{\P}{\mathbb{P}}
\newcommand{\Pcal}{\mathcal{P}}
\newcommand{\D}{\mathbb{D}}
\newcommand{\Tbb}{\mathbb{T}}
\newcommand{\loc}{\text{loc}}
\newcommand{\Dloc}{\D_\loc}
\renewcommand{\d}{{\rm d}}
\newcommand{\Pbf}{\mathbf{P}}
\newcommand{\Ebf}{\mathbf{E}}
\newcommand{\Qbf}{\mathbf{Q}}
\newcommand{\Rbf}{\mathbf{R}}
\renewcommand{\vect}{{\rm \text{span}}}
\newcommand{\law}{\mathscr{L}}
\newcounter{myhypo}
\newtcolorbox{hypo}[1]{
  breakable,
  enhanced,
  top=0pt,
  bottom=0pt,
  nobeforeafter,
  colback=white,
  boxrule=0pt,
  arc=0pt,
  right=40pt,
  left=20pt,
  outer arc=0pt,
  overlay={
    \node[inner sep=0pt,anchor=east] 
    at (frame.east) 
    {(#1)};
  },
}
\newcommand{\pushright}[1]{\ifmeasuring@#1\else\omit\hfill$\displaystyle#1$\fi\ignorespaces}
\newcommand{\pushleft}[1]{\ifmeasuring@#1\else\omit$\displaystyle#1$\hfill\fi\ignorespaces}
\theoremstyle{plain}
\newtheorem{theorem}{Theorem}[section]
\newtheorem{lemma}[theorem]{Lemma}
\newtheorem{proposition}[theorem]{Proposition}
\theoremstyle{definition}
\newtheorem{definition}[theorem]{Definition}
\newtheorem{remark-base}[theorem]{Remark}
\theoremstyle{remark}
\newenvironment{remark}{\pushQED{\qed}\begin{remark-base}}{\popQED\end{remark-base}}
\begin{document}
\title{Locally Feller processes and  martingale local problems}
\author{Mihai Gradinaru}
\author{Tristan Haugomat}
\affil{\small Institut de Recherche Math{\'e}matique de Rennes, Universit{\'e} de Rennes 1,\\Campus de Beaulieu, 35042 Rennes Cedex, France\\
\texttt{\small \{Mihai.Gradinaru,Tristan.Haugomat\}@univ-rennes1.fr}}
\date{
}
\maketitle

{\small\noindent {\bf Abstract:}~This paper is devoted to the study of a 
certain type of martingale problems associated to general operators corresponding 
to processes which have finite lifetime. We analyse several properties and 
in particular the weak convergence of sequences of solutions for an appropriate Skorokhod topology setting. 
We point out the Feller-type features of the associated solutions to this type of 
martingale problem. 
Then localisation theorems for well-posed martingale problems or for corresponding 
generators are proved.}\\
{\small\noindent{\bf Key words:}~martingale problem, Feller processes, weak convergence of probability measures, Skorokhod topology, generators, localisation}\\
{\small\noindent{\bf MSC2010 Subject Classification:}~Primary~60J25; Secondary~60G44, 60J35, 60B10, 60J75, 47D07}


\section{Introduction}

The theory of Lévy-type processes stays an active domain of research during the last two decades.
Heuristically, a Lévy-type process $X$ with symbol $q:\R^d\times\R^d\to\C$ 
is a Markov process which behaves locally like a Lévy process with characteristic 
exponent $q(a,\cdot)$, in a neighbourhood of each point $a\in\R^d$.
One associates to a Lévy-type process the pseudo-differential operator $L$ given 
by, for $f\in\rmC_c^{\infty}(\R^d)$,
\begin{equation*}
Lf(a):=-\int_{\R^d}{\rme}^{\rmi a\mycdot\alpha}
q(a,\alpha){\widehat f}(\alpha)\d\alpha,\quad
\mbox{where}\quad
{\widehat f}(\alpha):=(2\pi)^{-d}\int_{\R^d}\rme ^{-\rmi a\mycdot\alpha}f(a)\d a.
\end{equation*}

Does a sequence $X^{(n)}$ of Lévy-type processes, having symbols $q_n$,
converges toward some process, when the sequence of symbols $q_n$ converges 
to a symbol $q$? What about a sequence $X^{(n)}$, corresponding to
operators $L_n$, when the sequence of operators  converges to an operator $L$?
What could be the appropriate setting when one wants to approximate a Lévy-type processes 
by a family of discrete Markov chains? 
This is the kind of question which naturally appears when we study the Lévy-type processes.

It was a very useful observation that a unified manner to tackle a lot of questions 
about large classes of processes is the martingale problem approach (see, for instance, 
Stroock \cite{St75} for Lévy-type processes, Stroock and Varadhan \cite{SV06} for diffusion processes,  
Kurtz \cite{Ku11} for Lévy-driven stochastic differential equations...). 
Often,  convergence results are obtained under technical restrictions: for instance, when
the closure of $L$ is the generator of a Feller process (see Kallenberg \cite{Ka02}  Thm. 19.25, p. 385, Thm. 19.28, p. 387 or 
B\"ottcher, Schilling and Wang \cite{BSW13}, Theorem 7.6 p. 172). In a number of situations 
the cited condition is not satisfied. 

In the present paper we describe a general method which should be the main tool to 
tackle these difficulties and, even, should  relax some of these technical restrictions. 
We analyse sequences of martingale problems associated to large class 
of  operators acting on continuous functions and we look to Feller-type features of the associated of solutions. 

In order to be more precise, let us point out that the local Skorokhod 
topology on a locally compact Hausdorff space $S$
constitutes a good setting when one needs to consider explosions in finite time
 (see \cite{GH017}). Heuristically, we modify the global Skorokhod topology, on the space 
 of \cadlag paths, by localising with respect to space variable, in order to include the eventual explosions. 
 The definition of a martingale local problem 
follows in a natural way: we need to stop the martingale when it exits from compact sets.  
Similarly, a stochastic process is locally Feller if, for any compact set of $S$,  it coincides with a Feller process before it exits from the compact set. 
Let us note that an useful tool allowing to make the connection between local and global 
objects (Skorokhod topology, martingale, infinitesimal generator or Feller processes) is the time change transformation. Likewise, one has stability of all these local notions under the time change. 

We study the existence and the uniqueness of solutions for martingale local problems 
and we illustrate their locally Feller-type features (see Theorem \ref{thmDefLFF}). Then we deduce a 
description of the generator of a locally Feller family of probabilities by using a martingale  (see Theorem \ref{thmGenDesc} below). Furthermore we characterise  the convergence of a sequence of locally Feller processes in terms of convergence of 
operators, provided that the sequence of martingale local problems are well-posed (see Theorem \ref{thmCvgLocFel} below) and without supposing that the closure of the limit 
operator is an infinitesimal generator. 
We also consider the localisation question (as described in Ethier and Kurtz \cite{EK86}, \S 4.6, pp. 216-221) and we give 
answers in terms of martingale local problem or in terms of generator (Theorems \ref{thmLocMP} and \ref{thmLocGen}). We stress that a Feller process is locally Feller, 
hence our results, in particular the convergence theorems apply to Feller processes. In Theorem \ref{SecondCharFeller} we give a characterisation of Feller property in terms of locally Feller property plus an additional condition.

Our results should be useful in several situations, for instance, to analyse the convergence 
of a Markov chain toward a Lévy-type process under general conditions (improving the results, for instance, Thm 11.2.3 from  Stroock and Varadhan \cite{SV06} p. 272, Thm. 19.28 from Kallenberg \cite{Ka02}, p. 387 or from B\"otcher and Schnurr \cite{BS11}). We develop some of these applications (as the Euler scheme of approximation for Lévy-type process
or the convergence of Sinai's random walk toward the Brox diffusion) in a separate work \cite{GH217}. 
The method which we develop should apply for other situations. 
In a work in progress, we try to apply a similar method for 
some singular stochastic differential equations driven by  $\alpha$-stable processes other than 
Brownian motion.

The present paper is organised as follows:  in the next section we recall some notations and results obtained in our 
previous paper \cite{GH017} on the local Skorokhod topology on spaces of \cadlag functions, tightness and time change 
transformation. Section 3 is devoted to the study of the martingale local problem : properties, tightness and convergence, 
but also the existence of solutions. The most important results are 
presented in Section 4. In \S 4.1 and \S 4.2 we give the definitions and point out characterisations of a 
locally Feller family and its connection with a Feller family, essentially in terms of  martingale local problems. 
We also provide two corrections of a  result by van Casteren 
\cite{vC92}. In \S 4.3 we give a generator description of a locally Feller family and 
we characterise the convergence of a sequence of locally Feller families. \S 4.4 contains the 
localisation procedure for martingale problems and generators.  We collect in the Appendix the most part of technical proofs. 

\section{Preliminary notations and results}

We recall here some notations and results concerning the local Skorokhod topology, the tightness criterion and a time change transformation  
which will be useful to state and prove our main results. Complete statements and proofs are described in a entirely dedicated paper \cite{GH017}. 

Let $S$ be a locally compact Hausdorff space with countable base. The space $S$ could be endowed with a metric and so it is a Polish space. 
Take $\Delta\not\in S$, and we will denote by $S^\Delta\supset S$ the one-point compactification of $S$, if $S$ is not compact, 
or the topological sum $S\sqcup\{\Delta\}$, if $S$ is compact  (so $\Delta$ is an isolated point).  Denote $\rmC(S):=\rmC(S,\R)$, resp. $\rmC(S^\Delta):=\rmC(S^\Delta,\R)$,  the set of real continuous functions on $S$, resp. on $S^\Delta$.
If $\rmC_0(S)$ denotes the set of functions $f\in\rmC(S)$ vanishing in $\Delta$, we will identify
\[
\rmC_0(S)=\left\{f\in\rmC(S^\Delta)\mid f(\Delta)=0\right\}.
\]
We endow the set $\rmC(S)$ with the topology of uniform convergence on compact sets and  $\rmC_0(S)$ with the topology of uniform convergence.

The fact that a subset $A$ is compactly embedded in an open subset $U\subset S$ will be denoted $A\Subset U$.
If  $x\in (S^\Delta)^{\R_+}$ we denote 
\[
\xi(x):=\inf\{t\geq 0\,|\,\{x_s\}_{s\leq t}\not\Subset S\}.
\]

Firstly, we introduce the set of \cadlag paths with values in $S^\Delta$,
\begin{equation*}
\D(S^\Delta):=\left\{x\in(S^\Delta)^{\R_+}\mid
\begin{array}{l}\forall t\geq 0,\;x_t=\lim_{s\downarrow t}x_s,\;\mbox{ and }\; \\
\forall t>0,\;x_{t-}:=\lim_{s\uparrow t}x_s~\text{ exists in  }S^\Delta\end{array}\right\}\,.
\end{equation*}
endowed with the global Skorokhod topology (see, for instance, Chap. 3 in \cite{EK86}, pp. 116-147) which is Polish. 
A sequence $(x^k)_k$ in $\D(S^\Delta)$ converges
to $x$ for the latter topology if and only if there exists a sequence $(\lambda^k)_k$ of increasing homeomorphisms on $\R_+$ such that
\[
\forall t\geq 0,\quad\lim_{k\to\infty}\sup_{s\leq t}d(x_s,x^k_{\lambda^k_s})=0
\quad
\mbox{and}\quad
\lim_{k\to\infty}\sup_{s\leq t}|\lambda^k_s -s|=0.
\]
The  global Skorokhod topology does not depend on the arbitrary  metric $d$ on $S^\Delta$, but only on the topology on $S$. 

Secondly, we proceed with the definition of a set of exploding \cadlag paths  
\begin{equation*}
\Dloc(S):=\left\{ x\in(S^\Delta)^{\R_+}\mid \begin{array}{l}
\forall t\geq\xi(x),~ x_t=\Delta,\\
\forall t\geq 0,~ x_t=\lim_{s\downarrow t}x_s,\\
\forall t>0\text{ s.t. }\{x_s\}_{s<t}\Subset S,~ x_{t-}:=\lim_{s\uparrow t}x_s\text{ exists}
\end{array}\right\},
\end{equation*}
endowed with the local Skorokhod topology (see Theorem 2.6 in\cite{GH017}) 
which is Polish. Similarly, a sequence $(x^k)_{k\in\N}$ in $\Dloc(S)$
converges to $x$ for the local Skorokhod topology if and only if there exists a sequence  $(\lambda^k)_k$ 
of increasing homeomorphisms on $\R_+$ satisfying 
\[
\forall t\geq 0\mbox{ s.t. }\{x_s\}_{s<t}\Subset S,\quad \lim_{k\to\infty}\sup_{s\leq t}d(x_s,x^k_{\lambda^k_s})=0
\quad\mbox{and}\quad
\lim_{k\to\infty}\sup_{s\leq t}|\lambda^k_s -s|=0.
\]
Once again, the local Skorokhod topology does not depend on the arbitrary  metric $d$ on $S^\Delta$, but only on the topology on $S$.

We will always denote by $X$ the canonical process on $\D(S^\Delta)$ or on $\Dloc(S)$, 
without danger of confusion.  We endow each of $\D(S^\Delta)$ and $\Dloc(S)$ with the 
Borel 
$\sigma$-algebra $\Fc:=\sigma(X_s,~0\leq s <\infty)$ and a filtration $\Fc_t:=\sigma(X_s,~0\leq s \leq t)$. 
We will always omit the argument $X$ for the explosion time $\xi(X)$ of the  canonical process. It is clear 
that $\xi$ is a stopping time. Furthermore, if $U\subset S$ is an open subset, 
\begin{equation}\label{eqtauU}
\tau^U:=\inf\left\{t\geq 0\mid X_{t-}\not\in U\text{ or }X_t\not\in U\right\}\wedge\xi
\end{equation}
is a stopping time.

In \cite{GH017} we state and prove the following version of the Aldous criterion of tightness:  
let $(\Pbf_n)_n$ be a sequence of probability measures on $\Dloc(S)$. If for all $t\geq 0$, $\eps>0$, and open subset $U\Subset S$, 
we have:
\begin{equation}\label{hypropAldous}
\limsup_{n\to\infty}\sup_{\substack{\tau_1\leq\tau_2\\ \tau_2\leq (\tau_1+\delta)\wedge t\wedge\tau^U}}\Pbf_n\big(d(X_{\tau_1},X_{\tau_2})\geq\eps\big)\cv{\delta\to 0}0,
\end{equation}
then $\{\Pcal_n\}_n$ is tight for the local Skorokhod topology. Here $d$ is an arbitrary metric on $S^\Delta$ and the supremum is taken on all stopping times $\tau_i$.

There are several ways to localise processes, for instance one can stop when they leave a large compact set. Nevertheless this method does not preserve the  convergence and we need to adapt this procedure in order to 
recover continuity. Let us describe our time change transformation. Since \eqref{eqtauU}, we can write
\[
\tau^{^{\{g\neq 0\}}}(x):=\inf\left\{t\geq 0\mid g(x_{t-})\wedge g(x_t)=0\right\}\wedge\xi(x).
\]

Let $g\in\rmC(S,\R_+)$ be. For any $x\in\Dloc(S)$ and $t\in\R_+$ we denote
\begin{align}\label{taug}
\tau_t^g(x):=\inf\left\{s\geq 0\mid s\geq\tau^{\{g\neq 0\}}\text{ or } \int_0^s\frac{\d u}{g(x_u)}\geq t\right\}.
\end{align}
We define a time change transformation, which is $\Fc$-measurable,
\[\begin{array}{cccc}
g\mycdot X:&\Dloc(S)&\rightarrow & \Dloc(S)\\
&x&\mapsto & g\mycdot x,
\end{array}\]
as follows:  for $t\in\R_+$
\begin{align}\label{eqdefTC}
(g\mycdot X)_t:=\left\{\begin{array}{ll}
X_{\tau^{^{\{g\neq 0\}}}-} & \text{if }\tau_t^g=\tau^{^{\{g\neq 0\}}},~X_{\tau^{^{\{g\neq 0\}}}-}\text{ exists and belongs to }\{g=0\},\\
X_{\tau_t^g} & \text{otherwise}.
\end{array}\right.
\end{align}
For any $\Pbf\in\Pcal(\Dloc(S))$, we also define $g\mycdot\Pbf$ the pushforward of $\Pbf$ by $x\mapsto g\mycdot x$.
Let us stress that, $\tau_t^g$ is a stopping time (see Corollary 2.3 in \cite{GH017}).
The time of explosion of $g\mycdot X$ is given by 
\begin{equation*}
\xi(g\mycdot X) = \left\{\begin{array}{ll}
\infty & \text{if }\tau^{^{\{g\neq 0\}}}<\xi\text{ or }X_{\xi-}\text{ exists and belongs to }\{g=0\},\\
\int_0^\xi\frac{\d u}{g(x_u)} & \text{otherwise}.
\end{array}\right.
\end{equation*}
It is not difficult to see, using the definition of the time change \eqref{eqdefTC}, that
\begin{equation}\label{eqProp2TC}
\forall g_1,g_2\in\rmC(S,\R_+),~\forall x\in \Dloc(S),\quad g_1\mycdot(g_2\mycdot x)= (g_1g_2)\mycdot x.
\end{equation}

In \cite{GH017}) Proposition 3.8, a connection  between $\Dloc(S)$ and $\D(S^\Delta)$
was given. We recall here this result because it will employed several times.
 \begin{proposition}[Connection  between $\Dloc(S)$ and $\D(S^\Delta)$]\label{propEqLocGlo}
Let $\widetilde{S}$ be an arbitrary locally compact Hausdorff space with countable base and consider
\[\begin{array}{cccc}
\Pbf:&\widetilde{S}&\to&\Pcal(\Dloc(S))\\
&a&\mapsto&\Pbf_a
\end{array}\]
a weakly continuous mapping for the local Skorokhod topology. Then for any open subset $U$ of $S$, there exists 
$g\in\rmC(S,\R_+)$ such that $\{g\not =0\}=U$, for all $a\in\widetilde{S}$
\begin{equation*}
g\mycdot\Pbf_a\left(0<\xi<\infty\Rightarrow X_{\xi-}\text{ exists in }U\right)=1,
\end{equation*}
and the application
\begin{equation*}
\begin{array}{cccc}
g\mycdot\Pbf:&\widetilde{S}&\to&\Pcal(\{0<\xi<\infty\Rightarrow X_{\xi-}\text{ exists in }U\})\\
&a&\mapsto&g\mycdot\Pbf_a
\end{array}
\end{equation*}
is weakly continuous for the global Skorokhod topology from $\D(S^\Delta)$.
\end{proposition}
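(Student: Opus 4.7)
The plan is to decouple the two assertions of the proposition: the support property (that $g \mycdot \Pbf_a$ concentrates on paths with $X_{\xi-}$ existing in $U$ whenever $0 < \xi < \infty$) should follow from a careful choice of $g$ exploiting tightness of $\{\Pbf_a\}_a$, while the weak continuity for the global Skorokhod topology will then follow from the already-known continuity of the time-change transformation on $\Dloc(S)$ (from \cite{GH017}) combined with this support property.

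For the construction, first fix any $h \in \rmC_0(S,[0,1])$ with $\{h \neq 0\} = U$, for instance using a metric on $S^\Delta$ and a Urysohn-type function. Since $\widetilde{S}$ is $\sigma$-compact (locally compact and second countable), write $\widetilde{S} = \bigcup_n \widetilde{K}_n$ with $\widetilde{K}_n$ compact and increasing; weak continuity of $\Pbf$ then makes each subfamily $\{\Pbf_a\}_{a \in \widetilde{K}_n}$ tight in $\Dloc(S)$. Exhaust $U$ by relatively compact open sets $U_1 \Subset U_2 \Subset \cdots$ with $\bigcup_k U_k = U$; tightness together with the Aldous-type criterion \eqref{hypropAldous} yields, for each $n,k$, a time $T_{n,k}$ with $\sup_{a \in \widetilde{K}_n} \Pbf_a(\tau^{U_k} \leq T_{n,k},\, \xi > T_{n,k}) < 2^{-(n+k)}$. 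Then set $g = \phi \cdot h$, with $\phi \in \rmC(S,(0,1])$ equal to $1$ on $U_1$ and bounded above by $\delta_k$ on $U \setminus U_k$; the scales $\delta_k \downarrow 0$ are tuned by a diagonal extraction and a Borel--Cantelli argument so that on the \emph{bad event} $B := \{0 < \xi < \infty,\, X_{\xi-} \not\in U\}$ the path visits every shell $U_{k+1}\setminus U_k$ and escapes towards $\Delta$ or oscillates, while the slow decay of $g$ forces $\int_0^\xi \d u/g(X_u) = +\infty$, $\Pbf_a$-a.s., uniformly in $a \in \widetilde{S}$.

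With such $g$, the explicit formula for $\xi(g \mycdot X)$ recalled before \eqref{eqProp2TC} shows that $\xi(g \mycdot X) < \infty$ can only occur in the \emph{otherwise} branch of \eqref{eqdefTC}, in which $(g \mycdot X)_{\xi(g \mycdot X)-} = X_{\xi-}$; by construction this left limit exists and belongs to $U$, giving the support property. For weak continuity, continuity of the time-change transformation (from \cite{GH017}) gives that $a \mapsto g \mycdot \Pbf_a$ is weakly continuous for the local Skorokhod topology on $\Dloc(S)$; the support property concentrates all measures $g \mycdot \Pbf_a$ on the subset of $\Dloc(S)$ where the local and global Skorokhod topologies induce the same notion of convergence (no oscillation at the explosion time), so weak continuity transfers to the global Skorokhod topology inherited from $\D(S^\Delta)$. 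I expect the main difficulty to lie in the quantitative construction of $\phi$: the scales $\delta_k$ must decay fast enough to kill the bad event uniformly over the countable exhaustion $\{\widetilde{K}_n\}$, while $\phi$ must stay continuous with $\{\phi > 0\} = U$, so both the diagonal extraction and a careful regularisation of the piecewise bounds into a single continuous function are the delicate points.
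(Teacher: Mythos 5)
You should first be aware that this paper does not actually prove Proposition \ref{propEqLocGlo}: it is quoted verbatim from the companion paper \cite{GH017} (Proposition 3.8 there), so there is no in-paper argument to compare against; judged on its own, your outline has the right general shape (tune the decay of $g$ along an exhaustion of $U$, using tightness of $\{\Pbf_a\}_{a\in\widetilde{K}_n}$, so that explosions with bad left limits are stretched to infinite time), but two of its key steps fail as stated. First, the estimate you claim to extract from tightness, $\sup_{a\in\widetilde{K}_n}\Pbf_a\big(\tau^{U_k}\le T_{n,k},\ \xi>T_{n,k}\big)<2^{-(n+k)}$, is simply false in general and is not what the Aldous-type criterion \eqref{hypropAldous} gives: if some $\Pbf_a$ charges a constant path sitting at a point of $S\setminus U_k$ that never explodes, this probability equals $1$ for every $T$. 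Tightness controls the probability that the path crosses a spatial shell (moves a prescribed distance) within a short time $\delta$, uniformly over stopping times; it says nothing about exiting $U_k$ before a fixed time while surviving. The calibration of the scales $\delta_k$, and hence the Borel--Cantelli step forcing $\int_0^\xi \d u/g(X_u)=\infty$ on bad paths simultaneously for all $a$, has to be rebuilt on such crossing-time estimates. Note also that the bad event is not $\{0<\xi<\infty,\ X_{\xi-}\notin U\}$: if $X_{\xi-}$ exists in $S\setminus U=\{g=0\}$ the time-changed path is frozen and never explodes, so the only dangerous paths are those with $\tau^{\{g\neq 0\}}=\xi$, $\int_0^\xi \d u/g(X_u)<\infty$, and $X_{\xi-}$ nonexistent or equal to $\Delta$; by the definition of $\Dloc(S)$ these necessarily leave every compact of $S$, which is precisely what the crossing estimates must exploit.

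Second, the transfer step is not sound. The set $\{0<\xi<\infty\Rightarrow X_{\xi-}\text{ exists in }U\}$ is \emph{not} a set on which local and global Skorokhod convergence coincide: take $S=U=\R$, $x_t=0$ for $t<1$ and $x_t=\Delta$ for $t\ge 1$, and $x^k$ equal to $0$ outside $[1,1+1/k)$ and equal to $k$ on $[1,1+1/k)$. All these paths lie in the good set (the $x^k$ never explode, $x$ has left limit $0\in U$), and $x^k\to x$ for the local topology but not for the global one. Hence weak continuity for the global topology cannot be deduced softly from local weak continuity plus the almost-sure support property; one needs a quantitative control, uniform over compacts of $\widetilde{S}$, of the time-changed paths near their explosion time (in effect, tightness of $\{g\mycdot\Pbf_a\}$ in $\D(S^\Delta)$, ruling out short far excursions with return), and this control must be built into the very same choice of $g$. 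This is why the two conclusions cannot be decoupled in the way your plan proposes; as written, both the central estimate and the final transfer are genuine gaps.
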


\section{Martingale local problem}
\subsection{Definition and first properties}
To begin with we recall the optional sampling theorem. 
Its proof can be found in Theorem 2.13 and Remark 2.14. p. 61 from \cite{EK86}.
\begin{theorem}[Optional sampling theorem]\label{thmOptSam}
Let $\left(\Omega,(\Gc_t)_{t\in\R_+},\P\right)$ be a filtered probability space and let $M$ be a \cadlag $(\Gc_t)_t$-martingale, then for all $(\Gc_{t+})_t$-stopping times $\tau$ and $\sigma$, with $\tau$ bounded,
\[
\E\left[M_\tau\mid\Gc_{\sigma+}\right]=M_{\tau\wedge\sigma},\quad\P\text{-almost surely}.
\]
In particular $M$ is a $(\Gc_{t+})_t$-martingale.
\end{theorem}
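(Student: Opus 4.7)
The plan is to reduce the continuous-time statement to the elementary discrete-time optional sampling theorem via a dyadic approximation of both $\tau$ and $\sigma$, then pass to the limit using the cadlag property of $M$ together with uniform integrability. First I would treat the case where $\tau$ and $\sigma$ are $(\Gc_t)_t$-stopping times taking finitely many dyadic values $\{k2^{-n}:0\leq k\leq K\}$; in that discrete setting, the optional sampling identity $\E[M_\tau\mid\Gc_\sigma]=M_{\tau\wedge\sigma}$ follows by iterated use of the one-step martingale property over the finite grid, see e.g. \cite{EK86} Theorem 2.11 p.~59.

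Next I would approximate general $(\Gc_{t+})_t$-stopping times from above by the dyadic rounding
\[
\tau_n:=\frac{\lceil 2^n\tau\rceil}{2^n},\qquad \sigma_n:=\frac{\lceil 2^n\sigma\rceil}{2^n}.
\]
The standard lemma that $\{\tau<t\}\in\Gc_{t+}\iff\{\lceil 2^n\tau\rceil/2^n\leq k/2^n\}\in\Gc_{(k/2^n)+}$ shows that each $\tau_n$ is a $(\Gc_t)_t$-stopping time (values shifted by $2^{-n}$ make the strict inequality compatible with $\Gc_{t+}$), and similarly for $\sigma_n$. The sequences $\tau_n\downarrow\tau$, $\sigma_n\downarrow\sigma$, and $\tau$ bounded implies $\tau_n$ is uniformly bounded by $\|\tau\|_\infty+1$. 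The discrete case then yields
\[
\E[M_{\tau_n}\mid\Gc_{\sigma_n}]=M_{\tau_n\wedge\sigma_n}\quad\text{a.s.}
\]

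The final step is to pass to the limit $n\to\infty$. By the \cadlag property of $M$, $M_{\tau_n}\to M_\tau$ and $M_{\tau_n\wedge\sigma_n}\to M_{\tau\wedge\sigma}$ almost surely (using $\tau_n\downarrow\tau$ and right-continuity). To turn the almost-sure convergence into convergence in $L^1$ so the conditional expectation passes through, I would invoke the uniform integrability of $\{M_{\tau_n}\}_n$: since $\tau_n$ is bounded by a constant $T'$ and $M_{\tau_n}=\E[M_{T'}\mid\Gc_{\tau_n}]$ by the discrete case applied to the pair $(\tau_n,T')$, the family $\{M_{\tau_n}\}$ is dominated in the UI sense by the single integrable variable $M_{T'}$. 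Combined with $\Gc_{\sigma_n}\supset\Gc_{\sigma+}$ and downward martingale convergence (or directly the fact that $\Gc_{\sigma+}=\bigcap_n\Gc_{\sigma_n}$), the identity persists in the limit and yields $\E[M_\tau\mid\Gc_{\sigma+}]=M_{\tau\wedge\sigma}$.

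The main obstacle is the interplay between the completed/right-continuous filtration $\Gc_{t+}$ and the discrete approximations, i.e.\ arranging the rounding so that $\tau_n$ is a genuine $(\Gc_t)_t$-stopping time while $\Gc_{\sigma+}=\bigcap_n\Gc_{\sigma_n}$; once this measurability bookkeeping is settled, the uniform integrability argument is routine. The "in particular" clause then follows immediately by specialising to deterministic times $\sigma=s<t=\tau$.
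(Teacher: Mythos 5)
Your proposal is correct and follows essentially the same route as the proof the paper relies on: the statement is quoted from Ethier and Kurtz (Theorem 2.13 and Remark 2.14, p.~61), whose argument is exactly this dyadic discretisation, discrete optional sampling, uniform integrability via $M_{\tau_n}=\E[M_{T'}\mid\Gc_{\tau_n}]$, and passage to the limit using right-continuity. Only make the rounding precise, e.g.\ $\tau_n:=(\lfloor 2^n\tau\rfloor+1)/2^n$, so that $\{\tau_n\leq k2^{-n}\}=\{\tau<k2^{-n}\}\in\Gc_{k2^{-n}}$ even when $\tau$ takes dyadic values; with the ceiling as written $\tau_n$ is only a $(\Gc_{t+})_t$-stopping time, which is harmless but should be said.
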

\begin{definition}[Martingale local problem]
Let $L$ be a subset of $\rmC_0(S)\times\rmC(S)$.
\begin{enumerate}
\item[a)] The set $\Mc(L)$ of solutions of the martingale local problem associated 
to $L$ is the set of $\Pbf\in\Pcal\left(\Dloc(S)\right)$ such that for all $(f,g)\in L$ and open subset $U\Subset S$:
\[
f(X_{t\wedge\tau^U})-\int_0^{t\wedge\tau^U}g(X_s)\d s\text{ is a }\Pbf\text{-martingale}
\]
with respect to the filtration $(\Fc_t)_t$ or, equivalent, to the filtration  $(\Fc_{t+})_t$. Recall that $\tau^U$ is given by \eqref{eqtauU}. The martingale \textit{local problem} should not be confused with the \textit{local martingale} problem (see Remark \ref{rkBLMP} below for a connection).
\item[b)] We say that there is existence of a solution for the martingale local problem if for any $a\in S$ there exists an element $\Pbf$ in $\Mc(L)$ such that $\Pbf(X_0=a)=1$.
\item[c)] We say that there is uniqueness of the solution for the martingale local problem if for any $a\in S$ there is at most one element $\Pbf$ in $\Mc(L)$ such that $\Pbf(X_0=a)=1$.
\item[d)] The martingale local problem is said well-posed if there is existence and uniqueness of the solution.
\end{enumerate}
\end{definition}
\begin{remark}\label{rkBLMP}
1) By using dominated convergence, for all $L\subset\rmC_0(S)\times\rmC(S)$, $(f,g)\in L\cap\rmC_0(S)\times\rmC_b(S)$ and $\Pbf\in\Mc(L)$, we have that
\[
f(X_t)-\int_0^{t\wedge\xi}g(X_s)\d s\text{ is a }\Pbf\text{-martingale}.
\]
Hence, if $L\subset\rmC_0(S)\times\rmC_b(S)$, the martingale local problem and the classical martingale problem are equivalent.\\
2) It can be proved that, for all $L\subset\rmC_0(S)\times\rmC(S)$, $(f,g)\in L$ and $\Pbf\in\Mc(L)$ such that \[\Pbf\big(\xi<\infty\mbox{ implies }\{X_s\}_{s<\xi}\Subset S\big)=1,\] 
we have
\[
f(X_t)-\int_0^{t\wedge\xi}g(X_s)\d s\text{ is a }\Pbf\text{-local martingale}.
\]
Indeed, it suffices to use the family of stopping times 
\[
\left\{\tau^U\vee\big(T\1_{\{\tau^U\leq T,\tau^U=\xi\}}\big)\mid U\Subset S,\,T\geq 0\right\},
\] 
to obtain the assertion.\\
3) We shall see that the uniqueness or, respectively, the existence of a solution for the martingale local problem when one starts from a fixed point implies the uniqueness or  the existence of a solution for the martingale local problem when one starts with an arbitrary measure (see Proposition \ref{propEUF} below).\\
4) Let $L\subset\rmC_0(S)\times\rmC(S)$ and $\Pbf\in\Mc(L)$ be. If $(f,g)\in L$ and  $U\Subset S$ is an open subset, then, by dominated convergence
\[
\frac{\Ebf\left[f(X_{t\wedge\tau^U})\mid\Fc_0\right]-f(X_0)}{t}=\Ebf\left[\frac{1}{t}\int_0^{t\wedge\tau^U}g(X_s)\d s\mid\Fc_0\right]\cv[\Pbf\text{-a.s.}]{t\to0}g(X_0).
\]
\par\vspace{-1.7\baselineskip}\qedhere
\end{remark}

Let us point out some useful properties concerning the martingale local problem:
\begin{proposition}[Martingale local problem properties]\label{propLMPP}
Let $L$ be a subset of  $\rmC_0(S)\times\rmC(S)$. 
\begin{enumerate}
\item\label{it1PropLMPP} \emph{(Time change)}
Take $h\in\rmC(S,\R_+)$ and denote
\begin{equation}\label{notationhL}
hL:=\left\{(f,hg)\mid (f,g)\in L\right\}.
\end{equation}
Then, for all $\Pbf\in\Mc(L)$,
\begin{equation}
h\mycdot \Pbf\in\Mc(hL).
\end{equation}
\item\label{it2PropLMPP} \emph{(Closer property)}
The closure with respect to $\rmC_0(S)\times\rmC(S)$ satisfies
\begin{equation}
\Mc\left(\overline{\vect (L)}\right)=\Mc(L).
\end{equation}
\item\label{it3PropLMPP} \emph{(Compactness and convexity property)}
Suppose that $\rmD(L)$ is a dense subset of $\rmC_0(S)$, where the domain 
of $L$ is defined by
\[
\rmD(L):=\left\{f\in\rmC_0(S)\mid \exists g\in\rmC(S),~(f,g)\in L\right\}.
\]
Then $\Mc(L)$ is a convex compact set for the local Skorokhod topology.
\item\label{it4PropLMPP} \emph{(Quasi-continuity)}
Suppose that $\rmD(L)$ is a dense subset of $\rmC_0(S)$, then for any $\Pbf\in\Mc(L)$, $\Pbf$ is $(\Fc_{t+})_t$-quasi-continuous. 
More precisely this means that for any $(\Fc_{t+})_t$-stopping times $\tau,\tau_1,\tau_2\ldots$
\begin{equation}
X_{\tau_n}\cv{n\to\infty} X_\tau\quad\Pbf\text{-almost surely on }\left\{\tau_n\cv{n\to\infty}\tau<\infty\right\},
\end{equation}
with the convention $X_\infty:=\Delta$. 

In particular for any $t\geq 0$, $\Pbf(X_{t-}=X_t)=1$,
\[
\Pbf\big(\Dloc(S)\cap\D(S^\Delta)\big)=\Pbf\big(\xi\in(0,\infty)\Rightarrow X_{\xi_-}\text{ exists in }S^\Delta\big)=1,
\]
and for any open subset $U\subset S$, $\Pbf(\tau^U<\infty\Rightarrow X_{\tau^U}\not \in U)=1$.
\end{enumerate}
\end{proposition}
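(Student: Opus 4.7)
The plan is to handle the four items in order, using mainly the optional sampling theorem (Theorem \ref{thmOptSam}) and the structural facts about the local Skorokhod topology recalled in Section 2.

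For item \ref{it1PropLMPP}, I would use the substitution $s=\tau_u^h(x)$, which converts the defining integral: for any $(f,g)\in L$, $U\Subset S$ and $x\in\Dloc(S)$,
\[
f\big(x_{\tau_t^h\wedge\tau^U}\big)-\int_0^{\tau_t^h\wedge\tau^U} g(x_s)\,ds = f\big((h\mycdot x)_{t\wedge\sigma^U}\big)-\int_0^{t\wedge\sigma^U}(hg)\big((h\mycdot x)_u\big)\,du,
\]
where $\sigma^U$ is the exit time of $h\mycdot X$ from $U$. Since $\tau_t^h$ is a bounded stopping time (Corollary 2.3 in \cite{GH017}), Theorem \ref{thmOptSam} applied to the martingale associated with $(f,g)$ at $\tau_s^h\wedge\tau^U\le\tau_t^h\wedge\tau^U$ transfers the martingale property to the image law $h\mycdot\Pbf$. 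For item \ref{it2PropLMPP}, the inclusion $\Mc(\overline{\vect(L)})\subset\Mc(L)$ is trivial; linearity of conditional expectation gives $\Mc(L)\subset\Mc(\vect(L))$, and if $(f_n,g_n)\to(f,g)$ in $\rmC_0(S)\times\rmC(S)$ then $f_n\to f$ uniformly on $S^\Delta$ and, for each $U\Subset S$, $g_n\to g$ uniformly on $\overline{U}$, so the martingale equality for $(f_n,g_n)$ passes to $(f,g)$ by dominated convergence.

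For item \ref{it3PropLMPP}, convexity follows from linearity of expectation. For closedness under weak convergence, the obstruction is that $\tau^U$ is not continuous for the local Skorokhod topology. I would circumvent this via Proposition \ref{propEqLocGlo}: given $\Pbf_n\to\Pbf$ with $\Pbf_n\in\Mc(L)$ and $U\Subset S$, pick $g\in\rmC(S,\R_+)$ with $\{g\ne 0\}=U$ such that $g\mycdot\Pbf_n\to g\mycdot\Pbf$ weakly in $\D(S^\Delta)$ and, on $\{0<\xi<\infty\}$, $X_{\xi-}\in U$. Then $\tau^U$ becomes almost surely continuous and the standard Portmanteau argument (testing against bounded continuous $\Fc_s$-measurable functionals) passes the martingale identity to the limit; item \ref{it1PropLMPP} then translates the result back to $\Pbf$. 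Tightness follows from the Aldous criterion \eqref{hypropAldous}: for $(f,g)\in L$ and stopping times $\tau_1\le\tau_2\le(\tau_1+\delta)\wedge t\wedge\tau^U$, the martingale property yields $\Ebf\big[(f(X_{\tau_2})-f(X_{\tau_1}))^2\big]\le C\delta$ with $C$ depending on $\|f\|_\infty$ and $\sup_{\overline U}|g|$, and density of $\rmD(L)$ in $\rmC_0(S)$ converts this into control of $d(X_{\tau_1},X_{\tau_2})$.

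For item \ref{it4PropLMPP}, fix $(f,g)\in L$, $U\Subset S$, $T\ge 0$ and $(\Fc_{t+})$-stopping times $\tau_n\to\tau<\infty$ with $\tau_n\le\tau$ (the general case follows by splitting). Theorem \ref{thmOptSam} applied to the bounded martingale $M^U_t:=f(X_{t\wedge\tau^U})-\int_0^{t\wedge\tau^U}g(X_s)\,ds$ yields $\Ebf[M^U_{\tau_n\wedge T}]=\Ebf[M^U_{\tau\wedge T}]$; dominated convergence on the integral term and the cadlag property on the $f$ term give $\Ebf[f(X_{(\tau\wedge\tau^U\wedge T)-})]=\Ebf[f(X_{\tau\wedge\tau^U\wedge T})]$, hence $X_{(\tau\wedge\tau^U\wedge T)-}=X_{\tau\wedge\tau^U\wedge T}$ $\Pbf$-a.s.\ since $\rmD(L)$ is dense in $\rmC_0(S)$. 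Letting $U\uparrow S$ and $T\uparrow\infty$ yields $X_{\tau-}=X_\tau$ almost surely on $\{\tau_n\to\tau<\infty\}$, which is the quasi-continuity. The specific consequences ($X_{t-}=X_t$, existence of $X_{\xi-}$ in $S^\Delta$, $X_{\tau^U}\notin U$) follow by specialising to $\tau_n=\tau-1/n$ with $\tau=t$ or $\tau=\xi$, or $\tau_n\uparrow\tau^U$. The main obstacle I anticipate is the closedness step in item \ref{it3PropLMPP}: the discontinuity of $\tau^U$ on $\Dloc(S)$ forces the detour through Proposition \ref{propEqLocGlo}, and one has to check that the chosen time-change is compatible with item \ref{it1PropLMPP} and with the limiting procedure.
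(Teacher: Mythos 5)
Your plan for items \ref{it1PropLMPP} and \ref{it2PropLMPP} is workable (for \ref{it1PropLMPP} you still need the paper's two-step reduction, first $U\Subset\{h\neq 0\}$ then exhaustion by $U_n:=\{a\in U: d(a,\{h=0\})>n^{-1}\}$, because your pathwise identity fails when the time-changed path freezes at a zero of $h$ inside $U$ while the original path jumps at $\tau^{\{h\neq0\}}$; your direct limit argument for \ref{it2PropLMPP} is actually more elementary than the paper's, which deduces it from the semicontinuity statement of Proposition \ref{propUSCLMP}). The genuine gaps are in items \ref{it4PropLMPP} and \ref{it3PropLMPP}. For \ref{it4PropLMPP}, your central deduction is a non sequitur: from $\Ebf[f(X_{(\tau\wedge\tau^U\wedge T)-})]=\Ebf[f(X_{\tau\wedge\tau^U\wedge T})]$ for all $f$ in a dense subset of $\rmC_0(S)$ you only get equality of the \emph{laws} of $X_{(\tau\wedge\tau^U\wedge T)-}$ and $X_{\tau\wedge\tau^U\wedge T}$, not almost sure equality; unconditional first moments cannot see the coupling, and $\rmD(L)$ is not stable under squaring, so the usual second-moment trick is not directly available either. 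This is exactly why the paper does not argue this way: it first proves the quantitative, uniform estimate of Lemma \ref{lmUQC} (bump functions $\widetilde f$ near a point, control of $\Pbf(X_{\tau_1}\in\Kc,\ \tau(\tau_1)\le\tau_2)$ uniformly over stopping times and over $\Mc(L_n)$), and quasi-continuity follows from it. Moreover, the consequence $\Pbf(\Dloc(S)\cap\D(S^\Delta))=1$ does \emph{not} follow by ``specialising'' the quasi-continuity you would have proven: your argument (stopping at $\tau^U$ and letting $U\uparrow S$) only yields quasi-continuity on the event $\{\{X_s\}_{s<\tau}\Subset S\}$, which says nothing on the oscillatory-explosion event $\{\xi<\infty,\ \{X_s\}_{s<\xi}\not\Subset S\}$; excluding infinitely many excursions between a compact $K$ and the complement of $U$ in finite time is the separate Step 2 of the paper's proof, again driven by Lemma \ref{lmUQC}, and it is needed \emph{before} one can take $\tau=\xi$ in the quasi-continuity statement.

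For the closedness part of item \ref{it3PropLMPP}, two steps of your detour are unjustified. First, choosing $g$ with $\{g\neq0\}=U$ makes the time change non-invertible: item \ref{it1PropLMPP} only transports martingale properties forward, and to come back you would need $1/g\in\rmC(S,\R_+)$, which fails when $g$ vanishes; worse, $g\mycdot\Pbf$ loses the value $X_{\tau^U}$ when the pre-exit left limit lies in $\{g=0\}$, so no statement about $g\mycdot\Pbf$ can recover the stopped martingale property under $\Pbf$ at $\tau^U$. Second, the claim that ``$\tau^U$ becomes almost surely continuous'' after the time change is not substantiated, and it is precisely the difficulty the paper's argument is built to avoid: the paper takes $h\in\rmC(S,\R_+^*)$ strictly positive (Proposition \ref{propEqLocGlo} applied with $U=S$), adjusted so that $hg_n,hg\in\rmC_0(S)$, so that by Remark \ref{rkBLMP} the transported problem needs no localisation by $\tau^U$ at all; the limiting functional is then continuous on $\{X_{t-}=X_t\}$, one restricts to the co-countable set $\Tbb$ of times with $h\mycdot\Pbf(X_{t-}=X_t)=1$, extends by right continuity, and inverts with $1/h$ using \eqref{eqProp2TC}. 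Finally, your tightness bound $\Ebf[(f(X_{\tau_2})-f(X_{\tau_1}))^2]\le C\delta$ is not a direct consequence of the martingale property (it would require $f^2\in\rmD(L)$ or an approximation of $f^2$ by elements of $\rmD(L)$ with a limsup argument); the paper obtains the Aldous-criterion hypothesis \eqref{hypropAldous} from Lemma \ref{lmUQC} instead. In short, the missing ingredient throughout is a uniform, stopping-time version of small-time control such as Lemma \ref{lmUQC}; without it, both the quasi-continuity and the compactness arguments as you propose them do not close.
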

The following result tell that the mapping $L\mapsto\Mc(L)$ is somehow upper semi-continuous.
\begin{proposition}\label{propUSCLMP}
Let $L_n,L\subset\rmC_0(S)\times\rmC(S)$ be such that 
\begin{equation}\label{cvgenerateur}
\forall (f,g)\in L,\quad\exists(f_n,g_n)\in L_n,\mbox{ such that }\quad f_n\cv[\rmC_0]{n\to\infty} f,~g_n\cv[\rmC]{n\to\infty}g.
\end{equation}
Then:
\begin{enumerate}
\item\label{it1PropUSCLMP} \emph{(Continuity)}
Let $\Pbf^n,\Pbf\in\Pcal\left(\Dloc(S)\right)$ be such that $\Pbf^n\in\Mc(L_n)$ and suppose that $\{\Pbf^n\}_n$ converges weakly to $\Pbf$ for the local Skorokhod 
topology. Then $\Pbf\in\Mc(L)$.
\item\label{it2PropUSCLMP} \emph{(Tightness)}
Suppose that $\rmD(L)$ is dense in $\rmC_0(S)$, then for any sequence $\Pbf^n\in\Mc(L_n)$,  $\{\Pbf^n\}_n$ is tight for the local Skorokhod topology.
\end{enumerate}
\end{proposition}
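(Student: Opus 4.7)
Plan for (i). The idea is to verify the defining identity of $\Mc(L)$ by Portmanteau. Fix $(f,g)\in L$ with approximations $(f_n,g_n)\in L_n$ as in \eqref{cvgenerateur}, times $0\leq t_1\leq\dots\leq t_k\leq s<t$, bounded continuous $h_1,\dots,h_k:S^\Delta\to\R$, and an open $W\Subset S$. The hypothesis $\Pbf^n\in\Mc(L_n)$ gives
\[
\Ebf^n\Big[\prod_i h_i(X_{t_i\wedge\tau^W})\Big(f_n(X_{t\wedge\tau^W})-f_n(X_{s\wedge\tau^W})-\int_{s\wedge\tau^W}^{t\wedge\tau^W}g_n(X_u)\,\d u\Big)\Big]=0
\]
for every $n$. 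The obstacle is that $\tau^W$ is not a continuous functional on $\Dloc(S)$, so Portmanteau does not apply directly. I would circumvent this by enlarging $W$: pick a proper $\phi\in\rmC(S,\R_+)$ vanishing on $\overline{W}$, set $W_\alpha:=\{\phi<\alpha\}$, and note $W\Subset W_\alpha\Subset S$ for small $\alpha>0$. A Fubini argument --- each path of $\Dloc(S)$ has only countably many discontinuities, each contributing at most two ``bad'' values of $\alpha$ --- shows that $\tau^{W_\alpha}$ is $\Pbf$-a.s.\ continuous for Lebesgue-almost every $\alpha$. For any such $\alpha$, the functional inside the expectation (with $W_\alpha$ replacing $W$) is bounded and $\Pbf$-a.s.\ continuous; combined with the uniform convergences $f_n\to f$ in $\rmC_0(S)$ and $g_n\to g$ uniformly on the compact $\overline{W_\alpha}$, Portmanteau yields the identity under $\Pbf$ with $\tau^{W_\alpha}$ in place of $\tau^W$. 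Finally, $\tau^W\leq\tau^{W_\alpha}$ and the optional sampling Theorem~\ref{thmOptSam} transfer the martingale property down to $\tau^W$.

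Plan for (ii). I would verify the Aldous-type criterion \eqref{hypropAldous}. Fix $\epsilon>0$, $t>0$, and open $U\Subset S$. Density of $\rmD(L)$ in $\rmC_0(S)$ together with compactness of $\overline{U}$ gives finitely many $f_1,\dots,f_K\in\rmD(L)$ and $\eta>0$ such that, for $a,b\in\overline{U}$, $d(a,b)\geq\epsilon$ implies $\max_j|f_j(a)-f_j(b)|\geq\eta$. Pick $(f_j,\tilde g_j)\in L$ and approximations $(f_j^n,g_j^n)\in L_n$; note $\sup_n\|g_j^n\|_{\overline{U}}<\infty$. Then, for stopping times $\tau_1\leq\tau_2\leq(\tau_1+\delta)\wedge t\wedge\tau^U$,
\[
\Pbf^n\big(d(X_{\tau_1},X_{\tau_2})\geq\epsilon\big)\leq\sum_j\Pbf^n\big(|f_j^n(X_{\tau_1})-f_j^n(X_{\tau_2})|\geq\eta/2\big)+o_n(1),
\]
and by the martingale property plus Theorem~\ref{thmOptSam}, $f_j^n(X_{\tau_2})-f_j^n(X_{\tau_1})=(M^{j,n}_{\tau_2}-M^{j,n}_{\tau_1})+\int_{\tau_1}^{\tau_2}g_j^n(X_u)\,\d u$, whose integral part is $O(\delta)$. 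The martingale part is handled by Markov together with the orthogonality identity $\Ebf^n[(M^{j,n}_{\tau_2}-M^{j,n}_{\tau_1})^2]=\Ebf^n[(M^{j,n}_{\tau_2})^2-(M^{j,n}_{\tau_1})^2]$.

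The main technical obstacle is producing uniform smallness of $\Ebf^n[(M^{j,n}_{\tau_2})^2-(M^{j,n}_{\tau_1})^2]$ over stopping-time pairs with $\tau_2-\tau_1\leq\delta$: the submartingale $(M^{j,n})^2$ is uniformly bounded, but not \emph{a priori} uniformly equicontinuous. The natural fix exploits $f_j^2\in\rmC_0(S)$: by density of $\rmD(L)$, together with item~\ref{it2PropLMPP} of Proposition~\ref{propLMPP}, one can find elements in $\overline{\vect(L)}$ close to $f_j^2$, producing a compensator for the quadratic variation whose density on $\overline{U}$ is uniformly bounded in $n$; this yields the desired modulus.
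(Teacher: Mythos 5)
Your plan for part \ref{it1PropUSCLMP} takes a genuinely different route from the paper: the paper never works with exit times under the weak limit at all, but uses Proposition \ref{propEqLocGlo} to time-change everything onto $\D(S^\Delta)$ (choosing $h$ so that $hg_n\to hg$ in $\rmC_0(S)$), proves the unstopped martingale identity at deterministic times in the co-countable set $\Tbb=\{t:\ h\mycdot\Pbf(X_{t-}=X_t)=1\}$ via Portmanteau, and then undoes the time change. Your alternative (enlarge $W$ to $W_\alpha$ and keep only ``good'' levels $\alpha$) is a classical strategy, but the justification you give for the key step is wrong: the bad levels for a fixed path are \emph{not} produced by its jumps. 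A continuous path that touches the level set $\{\phi=\alpha\}$ tangentially and re-enters makes $y\mapsto\tau^{W_\alpha}(y)$ discontinuous at that path, and no discontinuity of the path is involved; the correct countability argument comes from monotonicity of $\alpha\mapsto\tau^{W_\alpha}(x)$, and even then continuity of the exit \emph{time} is not what Portmanteau needs --- you evaluate the path at $t_i\wedge\tau^{W_\alpha}$, so you need convergence of the \emph{stopped paths} (delicate when the exit occurs by a jump, since approximating paths may be stopped just before the matched jump), plus $\Pbf(X_{t-}=X_t)=1$ at the deterministic times $t_i,s,t$ (or a restriction to a co-countable set of times followed by a right-continuity/dominated-convergence passage, which you do not mention). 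In short, you are implicitly invoking a ``good levels'' lemma for the local Skorokhod topology (à la Jacod--Shiryaev VI.2.12) that is not in the paper's toolkit and whose proof is exactly the delicate point the paper's time-change reduction is designed to avoid. As written, this is a genuine gap, even though the strategy is repairable.

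For part \ref{it2PropUSCLMP} your route also differs from the paper's: the paper proves Lemma \ref{lmUQC} by a first-order argument (near-bump functions from the dense domain, a covering of the compact, and the stopped martingale identity), and the Aldous criterion follows at once; you propose the classical second-moment argument with an (approximate) compensator for $M^2$. The correct and useful observation is that \eqref{cvgenerateur} makes the approximating pairs $(\varphi_n,\psi_n)\in L_n$ converge, hence $\sup_n\|\psi_n\|_{\overline{W}}<\infty$, which gives the uniformity in $n$. But the final step is glossed over: since $f_j^2$ is in general not in $\rmD(L)$, you only get $\varphi\in\rmD(\vect(L))$ with $\|\varphi-f_j^2\|\leq\eps'$, and the resulting bound is of the form $\Ebf^n[(M^{j,n}_{\tau_2}-M^{j,n}_{\tau_1})^2]\leq C(\varphi,\psi)\,\delta+c\,\eps'+o_n(1)$, with an error $c\,\eps'$ that does \emph{not} vanish as $\delta\to0$; one must fix the accuracy $\eps'$ first and then send $\delta\to0$, so ``producing a compensator \dots\ yields the desired modulus'' is not literally what happens. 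Two further small points: the martingales you manipulate must be the ones stopped at $\tau^{W}$ for some open $U\Subset W\Subset S$, since this is all $\Mc(L_n)$ provides; and the separating family must distinguish $a\in\overline{U}$ from arbitrary $b\in S^\Delta$ (the exit at $\tau^U$ may be a jump far outside $\overline U$, or to $\Delta$), which bump-type $f_j$ do give but your stated property ``for $a,b\in\overline U$'' does not. With these repairs your part \ref{it2PropUSCLMP} works and is a legitimate alternative to Lemma \ref{lmUQC}; part \ref{it1PropUSCLMP}, as justified, does not yet stand.
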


The proofs of the two  latter propositions are interlaced and will be developed  in the appendix (see \S \ref{preuves_entrelacees}). During these proofs we use the following result concerning the property of uniform continuity along stopping times of the martingale local problem.  Its proof is likewise postponed to the Appendix. 

\begin{lemma}\label{lmUQC}
Let $L_n,L\subset\rmC_0(S)\times\rmC(S)$ be such that $\rmD(L)$ is dense in $\rmC_0(S)$ and assume the convergence of the operators in the sense given by 
\eqref{cvgenerateur}.
Consider $\Kc$ a compact subset of $S$ and $\Uc$ an open subset of $S^2$ containing $\{(a,a)\}_{a\in S}$. For an arbitrary $(\Fc_{t+})_t$-stopping time $\tau_1$ we denote the $(\Fc_{t+})_t$-stopping time
\[
\tau(\tau_1):=\inf\left\{t\geq \tau_1\mid\{(X_{\tau_1},X_s)\}_{\tau_1\leq s\leq t}\not\Subset \Uc \right\}.
\]
Then for each $\eps>0$ there exist $n_0\in\N$ and $\delta>0$ such that: for any $n\geq n_0$, $(\Fc_{t+})_t$-stopping times $\tau_1\leq\tau_2$ and $\Pbf\in\Mc(L_n)$ satisfying $\Ebf[(\tau_2-\tau_1)\mathds{1}_{\{X_{\tau_1}\in \Kc\}}]\leq\delta$, we have
\[
\Pbf(X_{\tau_1}\in \Kc,~\tau(\tau_1)\leq \tau_2)\leq\eps,
\]
with the convention $X_\infty:=\Delta$.
\end{lemma}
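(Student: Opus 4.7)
The idea is to reduce the problem, via a finite open cover of $\Kc$, to local exit-probability estimates obtained by applying the optional sampling theorem to well-chosen bump-like test functions. Since $\Uc$ is open and contains the diagonal of $S^2$, for each $a\in\Kc$ local compactness provides open sets $V_a\subset W_a$ with $a\in V_a$, $\overline{W_a}\Subset S$, and $\overline{V_a}\times\overline{W_a}\subset\Uc$. Extract a finite subcover $\{V_{a_j}\}_{j=1}^{k}$ of $\Kc$ and set $\sigma_j:=\inf\{t\geq\tau_1\mid X_t\notin W_{a_j}\}$. The compact inclusion $\overline{V_{a_j}}\times\overline{W_{a_j}}\Subset\Uc$ forces $\sigma_j\leq\tau(\tau_1)$ on $\{X_{\tau_1}\in V_{a_j}\}$, so it suffices to bound each $\Pbf(X_{\tau_1}\in V_{a_j}\cap\Kc,\sigma_j\leq\tau_2)$. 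For a small $\eps'>0$ to be fixed later, Urysohn's lemma furnishes $\tilde\phi_j\in\rmC_c(S)$ with $0\leq\tilde\phi_j\leq 1$, $\tilde\phi_j\equiv 1$ on $\overline{V_{a_j}}$ and $\tilde\phi_j\equiv 0$ on $W_{a_j}^c$; the density of $\rmD(L)$ in $\rmC_0(S)$ together with \eqref{cvgenerateur} then yields $(\phi_j^n,\psi_j^n)\in L_n$ such that, for all $n\geq n_0$ and all $j$, $\|\phi_j^n-\tilde\phi_j\|_\infty<\eps'$ and $\sup_{\overline{W_{a_j}}}|\psi_j^n|\leq M$, for suitable $n_0\in\N$ and $M>0$ depending only on $\eps'$.

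Fix $T\geq 0$ and an exhausting sequence $U_m\Subset S$ with $U_m\uparrow S$. For $m$ large enough that $\overline{W_{a_j}}\subset U_m$, the process $\phi_j^n(X_{\cdot\wedge\tau^{U_m}})-\int_0^{\cdot\wedge\tau^{U_m}}\psi_j^n(X_s)\,\d s$ is a bounded $\Pbf$-martingale on $[0,T]$. Applying Theorem~\ref{thmOptSam} at the bounded stopping time $\rho_j:=\sigma_j\wedge\tau_2\wedge T$ (conditioning on $\Fc_{\tau_1+}$) and taking expectation against the $\Fc_{\tau_1+}$-measurable weight $\mathds{1}_{\{\tau_1\leq T\}}\mathds{1}_{V_{a_j}\cap\Kc}(X_{\tau_1})$ produces an identity. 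On $\{X_{\tau_1}\in V_{a_j},\tau_1\leq T\}$ the process stays in $W_{a_j}\subset U_m$ until $\sigma_j$, whence $\rho_j\leq\sigma_j\leq\tau^{U_m}$ as soon as $\tau^{U_m}\geq\tau_1$; since $\{X_{\tau_1}\in\Kc\}\subset\{\xi>\tau_1\}$ and $\tau^{U_m}\uparrow\xi$, letting $m\to\infty$ by dominated convergence removes the $\tau^{U_m}$-stopping to give
\[
\Ebf\!\left[(\phi_j^n(X_{\rho_j})-\phi_j^n(X_{\tau_1}))\mathds{1}_{\{\tau_1\leq T\}}\mathds{1}_{V_{a_j}\cap\Kc}(X_{\tau_1})\right]=\Ebf\!\left[\int_{\tau_1}^{\rho_j}\psi_j^n(X_s)\,\d s\;\mathds{1}_{\{\tau_1\leq T\}}\mathds{1}_{V_{a_j}\cap\Kc}(X_{\tau_1})\right].
\]

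On $A_j:=\{X_{\tau_1}\in V_{a_j}\cap\Kc,\tau_1\leq T,\sigma_j\leq\tau_2\wedge T\}$ one has $\rho_j=\sigma_j$ and, by the \cadlag definition of $\sigma_j$, $X_{\sigma_j}\in W_{a_j}^c$; thus $\phi_j^n(X_{\rho_j})-\phi_j^n(X_{\tau_1})\leq-(1-2\eps')$ on $A_j$, while the same difference is $\leq 2\eps'$ on the complementary part of $\{X_{\tau_1}\in V_{a_j}\cap\Kc,\tau_1\leq T\}$. The right-hand side above is bounded in absolute value by $M\,\Ebf[(\tau_2-\tau_1)\mathds{1}_\Kc(X_{\tau_1})]\leq M\delta$, yielding $(1-2\eps')\Pbf(A_j)\leq M\delta+2\eps'\Pbf(X_{\tau_1}\in V_{a_j}\cap\Kc)$. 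Summing over $j$ via $\sum_j\mathds{1}_{V_{a_j}\cap\Kc}\leq k\mathds{1}_\Kc$, letting $T\to\infty$ by monotone convergence, and choosing $\eps':=\eps/(8k)$ and then $\delta:=\eps/(4kM)$ gives $\Pbf(X_{\tau_1}\in\Kc,\tau(\tau_1)\leq\tau_2)\leq\eps$.

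The delicate point is the step above involving the exhaustion $U_m$: the martingale property of $\Pbf\in\Mc(L_n)$ is stated only up to exits $\tau^U$ for $U\Subset S$, while $\tau_1,\tau_2$ may lie beyond such exits. The inclusion $\{X_{\tau_1}\in\Kc\}\subset\{\xi>\tau_1\}$ combined with $\tau^{U_m}\uparrow\xi$ eliminates this localisation through dominated convergence, avoiding any recourse to tightness or quasi-continuity of $\Pbf\in\Mc(L_n)$ — a point that matters since the present lemma feeds into the proofs of Propositions~\ref{propLMPP} and~\ref{propUSCLMP}.
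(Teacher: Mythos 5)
Your proof is correct and follows essentially the same strategy as the paper's: approximate a bump function from $\rmD(L)$ via \eqref{cvgenerateur}, apply optional sampling to the $\tau^{U}$-localised martingale between $\tau_1$ and a capped version of $\tau_2$, bound the drift term by $M\,\Ebf[(\tau_2-\tau_1)\mathds{1}_{\{X_{\tau_1}\in\Kc\}}]$, and finish with a finite cover of $\Kc$ (your use of fixed pairs $V_{a_j}\Subset W_{a_j}$ instead of the paper's balls centred at the random point $X_{\tau_1}$ merely spares you the limit $\eps\uparrow 3\eps_0$). The only quibble is that $\sigma_j$, being the entry time of the closed set $S^\Delta\setminus W_{a_j}$ for a merely right-continuous path, deserves a word on why it is an $(\Fc_{t+})_t$-stopping time — easily arranged by replacing it with the hitting time of an open set $S^\Delta\setminus\overline{W'_{a_j}}$ for an intermediate $V_{a_j}\Subset W'_{a_j}\Subset W_{a_j}$ — and this is of the same nature as the measurability points the paper itself leaves implicit.
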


\subsection{Existence and conditioning}

Before giving the result of existence of a solution for the martingale local problem, let us recall that $X^\tau_t=X_{\tau\wedge t}$ for $\tau$ a stopping time, and the classical 
positive maximal principle (see \cite{EK86}, p.165):
\begin{definition}
A subset $L\subset\rmC_0(S)\times\rmC(S)$ satisfies the positive maximum principle if for all $(f,g)\in L$ and $a_0\in S$ such that $f(a_0)=\sup_{a\in S}f(a)\geq 0$ then $g(a_0)\leq 0$.
\end{definition}
\begin{remark}\label{rqunivariate}
1) A linear subspace $L\subset\rmC_0(S)\times\rmC(S)$ satisfying the positive maximum principle
is univariate. Indeed for any $(f,g_1),(f,g_2)\in L$, applying the positive maximum principle to $(0,g_2-g_1)$ and $(0,g_1-g_2)$ we deduce that $g_1=g_2$.\\
2) Suppose furthermore that $\rmD(L)$ is dense in $\rmC_0(S)$, then as a consequence of the second part of Proposition \ref{propLMPP} and of Theorem \ref{thmExMP} below, the closure $\overline{L}$ in $\rmC_0(S)\times\rmC(S)$ satisfy the positive maximum principle, too.
\end{remark}

The existence of a solution for the martingale local problem result will be a consequence of Theorem 5.4 p. 199 from \cite{EK86}.
\begin{theorem}[Existence]\label{thmExMP}
Let $L$ be a linear  subspace of $\rmC_0(S)\times\rmC(S)$.
\begin{enumerate}
\item If there is existence of a solution for the martingale local problem associated to $L$, then $L$ satisfies the positive maximum principle.
\item Conversely, if $L$ satisfies the positive maximum principle and $D(L)$ is dense in $\rmC_0(S)$, then there is existence of a solution 
for the martingale local problem associated to $L$.
\end{enumerate}
\end{theorem}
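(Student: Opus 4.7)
For (1) I plan to feed the martingale identity directly into the positive maximum principle. Fix $(f,g)\in L$ and $a_0\in S$ with $f(a_0)=\sup_a f(a)\geq 0$, pick $\Pbf\in\Mc(L)$ starting at $a_0$, and choose an open $U\Subset S$ containing $a_0$. The inequality $f(X_{t\wedge\tau^U})\leq f(a_0)$ combined with the defining martingale identity yields
\[
\Ebf\!\left[\tfrac{1}{t}\int_0^{t\wedge\tau^U}g(X_s)\,\d s\right]
=\tfrac{1}{t}\Ebf\big[f(X_{t\wedge\tau^U})-f(a_0)\big]\leq 0,
\]
and item~4 of Remark~\ref{rkBLMP} identifies the $t\downarrow 0$ limit of the left hand side with $g(a_0)$, so $g(a_0)\leq 0$.

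For (2), the strategy is to reduce to a bounded operator on the compactification $S^\Delta$ via the time change of Section~2, apply Theorem~5.4 of~\cite{EK86} (p.~199) in that bounded setting, and then invert the time change. Fix $a\in S$. Since $\rmC_0(S)\times\rmC(S)$ is Polish, $L$ is separable, so one can extract a countable subset $L_0=\{(f_n,g_n)\}_{n\in\N}\subset L$ dense in $L$ for the product topology; in particular $\rmD(L_0)$ remains dense in $\rmC_0(S)$. The first technical step is to produce a strictly positive $h\in\rmC_0(S,(0,\infty))$ such that $hg_n\in\rmC_0(S)$ for every $n$. Taking a compact exhaustion $K_1\Subset K_2\Subset\cdots$ of $S$ and an increasing sequence of cutoffs $\phi_j\in\rmC_c(S,[0,1])$ with $\phi_j=1$ on $K_j$ and $\mathrm{supp}(\phi_j)\Subset K_{j+1}$, one may set for instance (with $\phi_0:=0$)
\[
h:=\sum_{j\geq 1}\frac{\phi_j-\phi_{j-1}}{2^j\big(1+\max_{n\leq j}\|g_n\|_{K_{j+1}}\big)},
\]
and verify, noting that only the terms with $j\geq N+1$ contribute outside $K_{N+1}$, that $h$ and each $hg_n$ vanish at infinity.

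With such an $h$, the set $hL_0$ lies in $\rmC_0(S)\times\rmC_0(S)$, retains a dense domain, and inherits the positive maximum principle from $L$ (because $h\geq 0$); extending by zero at $\Delta$, Theorem~5.4 of~\cite{EK86} produces a probability $\Qbf\in\Pcal(\D(S^\Delta))$ solving the classical martingale problem for $hL_0$ with $\Qbf(X_0=a)=1$, and by item~1 of Remark~\ref{rkBLMP} one has $\Qbf\in\Mc(hL_0)$. The inverse time change, Proposition~\ref{propLMPP}(\ref{it1PropLMPP}) applied with $1/h\in\rmC(S,\R_+)$, then produces
\[
\Pbf:=(1/h)\mycdot\Qbf\in\Mc\big((1/h)(hL_0)\big)=\Mc(L_0),
\]
and the starting point is preserved because $\tau_0^{1/h}=0$. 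Finally, density of $L_0$ in $L$ forces $L\subset\overline{\vect(L_0)}$, so Proposition~\ref{propLMPP}(\ref{it2PropLMPP}) gives $\Pbf\in\Mc(L_0)=\Mc(\overline{\vect(L_0)})\subset\Mc(L)$, as required.

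The main obstacle I expect to wrestle with is the simultaneous construction of a single $h$ taming the whole countable family $(g_n)_n$, together with the bookkeeping showing that the composition $(1/h)\mycdot(h\mycdot\,\cdot)$ really acts as the identity on the relevant laws (via~\eqref{eqProp2TC}) and that the starting point survives. Once these points are in place, the reduction to~\cite{EK86} and the upgrade from $\Mc(L_0)$ to $\Mc(L)$ via the closure property of Proposition~\ref{propLMPP}(\ref{it2PropLMPP}) are essentially formal.
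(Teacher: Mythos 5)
Your part (1) and the broad strategy of your part (2) --- countable dense subset, a damping function $h$ making the images land in $\rmC_0(S)\times\rmC_0(S)$, reduction to Theorem 5.4, p.~199 of \cite{EK86}, inverse time change by $1/h$, and the closure property of Proposition \ref{propLMPP} --- are exactly the paper's route, and your explicit construction of $h$ is a fine way to justify what the paper merely asserts. The genuine gap is the sentence ``by item~1 of Remark \ref{rkBLMP} one has $\Qbf\in\Mc(hL_0)$''. By definition $\Mc(hL_0)\subset\Pcal(\Dloc(S))$, whereas the measure $\Qbf$ produced by \cite{EK86} lives on $\D(S^\Delta)$: under $\Qbf$ the canonical path may hit $\Delta$ (or accumulate at $\Delta$ in finite time) and afterwards re-enter $S$, so the absorption requirement $x_t=\Delta$ for $t\geq\xi(x)$ defining $\Dloc(S)$ can fail with positive probability, and $\Qbf$ is not a law on $\Dloc(S)$ at all. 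Remark \ref{rkBLMP}(1) cannot bridge this: its equivalence between the local and the classical problem is an equivalence between properties of measures that are already on $\Dloc(S)$, and its content goes from local solutions to global martingales, not from a $\D(S^\Delta)$-solution to membership in $\Mc$. The gap then propagates, because the time change $x\mapsto(1/h)\mycdot x$ is a transformation of $\Dloc(S)$, so $(1/h)\mycdot\Qbf$ is not even defined.

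The paper inserts precisely the missing step: it replaces $\widetilde{\Pbf}$ (your $\Qbf$) by $\Pbf:=\law_{\widetilde{\Pbf}}(X^{\tau^S})$, the law of the path stopped (hence absorbed) at $\tau^S$, which does belong to $\Pcal\big(\Dloc(S)\cap\D(S^\Delta)\big)$, and then checks, via optional sampling at $\tau^U\leq\tau^S$ for $U\Subset S$ together with the fact that both $f$ and $hg$ vanish at $\Delta$ (so the martingale increment and the integral accrue nothing after $\tau^S$), that this stopped law still satisfies all the required identities, i.e. $\Pbf\in\Mc(hL_0)$; only then does it apply the time change by $1/h$. You need to add this absorption-plus-verification step for your argument to go through. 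A secondary, minor point: Theorem 5.4 of \cite{EK86} is stated for a linear (univariate) operator satisfying the positive maximum principle, while your $L_0$ is only a countable set; you should apply it to $h\,\vect(L_0)$ (the paper's $L_0:=\vect(\widetilde{L}_0)$), which still lies in $\rmC_0(S)\times\rmC_0(S)$, inherits the positive maximum principle and hence is univariate, and changes nothing in the remainder of your proof.
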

\begin{proof}
Suppose that there is existence of a solution for the martingale local problem, let $(f,g)\in L$ and $a_0\in S$ be such that $f(a_0)=\sup_{a\in S}f(a)\geq 0$. If we take $\Pbf\in\Mc(L)$ such that $\Pbf(X_0=a_0)=1$, then, by the fourth part of Remark \ref{rkBLMP}  
\[
g(a_0)=\lim_{t\to 0}\frac{1}{t}(f(X_{t\wedge\tau^U})-f(a_0))\leq 0,
\]
so $L$ satisfies the positive maximum principle.

Let us prove the second part of Theorem \ref{thmExMP}.
Consider $\widetilde{L}_0$ a countable dense subset of $L$ and $L_0:=\vect(\widetilde{L}_0)$. There exists $h\in\rmC_0(S)$ such that for all $(f,g)\in\widetilde{L}_0$: $hg\in\rmC_0$, hence $\overline{L}=\overline{L_0}$ and $hL_0\subset\rmC_0(S)\times\rmC_0(S)$.
We apply Theorem 5.4 p. 199 of \cite{EK86}
to the univariate operator $hL_0$: for all  $a\in S$, there exists $\widetilde{\Pbf}\in\Pcal(\D(S^\Delta))$ such that $\widetilde{\Pbf}(X_0=a)=1$ and for all $(f,g)\in hL_0$
\[
f(X_t)-\int_0^tg(X_s)\d s\text{ is a }\widetilde{\Pbf}\text{-martingale}.
\]
Then $\Pbf:=\law_{\widetilde{\Pbf}}(X^{\tau^S})\in\Pcal\left(\Dloc(S)\cap\D(S^\Delta)\right)$, moreover for any $(f,g)\in hL_0$, for any open subset $U\Subset S$, for any $s_1\leq \cdots\leq s_k\leq s\leq t$ in $\R_+$ and for any $\varphi_1,\ldots,\varphi_k\in\rmC(S^\Delta)$,
\begin{multline*}
\Ebf\left[\left(f(X_{t\wedge\tau^U})-f(X_{s\wedge\tau^U})-\int_{s\wedge\tau^U}^{t\wedge\tau^U}g(X_u)\d u\right)\varphi_1(X_{s_1})\cdots \varphi_k(X_{s_k})\right]\\
\widetilde{\Ebf}\left[\left(f(X_{t\wedge\tau^U})-f(X_{s\wedge\tau^U})-\int_{s\wedge\tau^U}^{t\wedge\tau^U}g(X_u)\d u\right)\varphi_1(X_{s_1})\cdots \varphi_k(X_{s_k})\right] = 0.
\end{multline*}
Hence $\Pbf\in\Mc(hL_0)$.  To conclude we use the two first 
parts of Proposition \ref{propLMPP}:
\[
\Mc(L)=\Mc(\overline{L})=\Mc(L_0)=\left\{\frac{1}{h}\mycdot\Qbf\mid\Qbf\in\Mc(hL_0)\right\}.
\]
So $\frac{1}{h}\mycdot \Pbf\in\Mc(L)$ and the existence of a solution for the martingale local problem is proved.
\end{proof}

\begin{remark}\label{rkExCL}
Since $\Fc$ is the Borel $\sigma$-algebra on the Polish space $\Dloc(S)$,
we can use Theorem 6.3, in \cite{Ka02},  p. 107. So, for any $\Pbf\in\Pcal\left(\Dloc(S)\right)$ and $(\Fc_{t+})_t$-stopping time $\tau$, the regular conditional distribution $\Qbf_X\overset{\Pbf\text{-a.s.}}{:=}\law_\Pbf\big((X_{\tau+t})_{t\geq 0}\,\big|\,\Fc_{\tau+}\big)$ exists. It means that there exists
\[\begin{array}{cccc}
\Qbf:&\Dloc(S)&\to&\Pcal\left(\Dloc(S)\right)\\
&x&\mapsto&\Qbf_x
\end{array}\] 
such that for any $A\in\Fc$, $\Qbf_X(A)$ is $\Fc_{\tau+}$-measurable and
\[
\Pbf\big((X_{\tau+t})_{t\geq 0}\in A\,\big|\,\Fc_{\tau+}\big)=\Qbf_X(A)\quad\Pbf\text{-almost surely}.\qedhere
\]
\end{remark}
\begin{proposition}[Conditioning]\label{propCondLMP}
Take $L\subset\rmC_0(S)\times\rmC(S)$, $\Pbf\in\Mc(L)$, and a $(\Fc_{t+})_t$-stopping time $\tau$. As in Remark \ref{rkExCL} we denote $\Qbf_X\overset{\Pbf\text{-a.s.}}{:=}\law_\Pbf\big((X_{\tau+t})_{t\geq 0}\,\big|\,\Fc_{\tau+}\big)$, then
\[
\Qbf_X\in\Mc(L),\quad\Pbf\text{-almost surely}.
\]
\end{proposition}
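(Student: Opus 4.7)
The strategy is to reduce the assertion to a countable family of conditions on $\Qbf_X$, verify each one $\Pbf$-almost surely by combining the definition of the regular conditional distribution with the optional sampling theorem (Theorem \ref{thmOptSam}) applied to the original $\Pbf$-martingale, and then intersect the countably many $\Pbf$-negligible exceptional sets to obtain a single full-measure set on which $\Qbf_x \in \Mc(L)$.

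To extract a countable family of tests I would exploit that $\rmC_0(S)$ is separable and choose a countable $\widetilde L\subset L$ which is dense in $L$ for the product topology on $\rmC_0(S)\times\rmC(S)$; by Proposition \ref{propLMPP}\ref{it2PropLMPP}, verifying the martingale property over $\widetilde L$ implies it over $L$. The condition $U\Subset S$ can be tested along a countable basis of relatively compact open sets, the times are taken rational, and the martingale identity at each pair of rational times $0\leq s\leq t$ need only be tested against bounded continuous $\Fc_s$-measurable functions $\varphi$ drawn from a countable convergence-determining family. A dominated-convergence argument then certifies that $\Qbf_x\in\Mc(L)$ on the resulting intersection.

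Fix such data $(f,g,U,s,t,\varphi)$ and set $M_r := f(X_{r\wedge\tau^U}) - \int_0^{r\wedge\tau^U} g(X_u)\,\d u$, a bounded $\Pbf$-martingale for $(\Fc_{u+})_u$. Writing $Y$ for the canonical process under $\Qbf_x$, the required identity $\Ebf_{\Qbf_X}[(M_t(Y)-M_s(Y))\varphi(Y)]=0$ ($\Pbf$-a.s.) reduces, through multiplication by an arbitrary bounded $\Fc_{\tau+}$-measurable $\Phi$ and the defining property of the regular conditional distribution, to
\[
\Ebf_\Pbf\!\left[\Phi\cdot\bigl(M_t(\theta_\tau X) - M_s(\theta_\tau X)\bigr)\cdot\varphi(\theta_\tau X)\right] = 0,
\]
with $\theta_\tau X := (X_{\tau+u})_{u\geq 0}$. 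Introducing the $(\Fc_{u+})_u$-stopping times $\sigma_r := \tau + r\wedge\tau^U(\theta_\tau X)$, the bracket factors as $M_{\sigma_t}-M_{\sigma_s}$ on the event $\{\tau<\tau^U\}$, where $\tau+\tau^U(\theta_\tau X) = \tau^U$ and hence $\sigma_r\leq\tau^U$; on $\{X_\tau\notin U\}$ it vanishes identically because $\sigma_s=\sigma_t=\tau$. Since $\Phi\,\varphi(\theta_\tau X)$ is $\Fc_{\sigma_s+}$-measurable, Theorem \ref{thmOptSam} applied to $M$ at $\sigma_s\leq\sigma_t$ yields the desired vanishing on the union of these two events.

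The point I expect to be the main obstacle is the residual boundary event $\{X_\tau\in U,\ X_{\tau-}\notin U\}$, on which $\tau^U = \tau$ yet $\tau^U(\theta_\tau X)>0$, so $\sigma_r$ overshoots $\tau^U$ and the previous factorisation breaks down. To absorb it I would exhaust $U$ by larger open sets $V$ with $U\Subset V\Subset S$, re-run the same optional-sampling argument with the $\Pbf$-martingale associated to $V$ (for which $\tau<\tau^V$ throughout this event, as soon as $V$ also contains $X_{\tau-}$), and pass to the limit using Proposition \ref{propEqLocGlo} together with the continuity of $f$ and $g$ to kill the boundary contribution. Intersecting the resulting $\Pbf$-null exceptional sets over the countable family of tests then produces the full-measure set on which $\Qbf_x\in\Mc(L)$.
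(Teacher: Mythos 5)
Your overall architecture --- reduce to countably many tests, apply optional sampling to the shifted increment, and absorb the exceptional event by enlarging $U$ --- is the paper's, but two steps as written do not go through. First, the measurability claim underlying your optional sampling step is false: on $\{\tau<\tau^U\}$ one has $\sigma_s=(\tau+s)\wedge\tau^U$, and if the path exits $U$ during $(\tau,\tau+s_i]$ for some test time $s_i\le s$, then $\varphi_i(X_{\tau+s_i})$ is not $\Fc_{\sigma_s+}$-measurable, so you cannot apply Theorem \ref{thmOptSam} at $\sigma_s\le\sigma_t$ against $\Phi\,\varphi(\theta_\tau X)$. The repair is to condition on $\Fc_{(\tau+s)+}$ instead and to use that the already stopped process $N_r:=f(X_{r\wedge\tau^U})-\int_0^{r\wedge\tau^U}g(X_u)\,\d u$ satisfies $\Ebf\big[N_{\tau+t}-N_{\tau+s}\mid\Fc_{(\tau+s)+}\big]=0$, which is exactly the paper's displayed identity.

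Second, and more importantly, the exceptional event is not the boundary event $\{X_\tau\in U,\ X_{\tau-}\notin U\}$ but the larger event $\{\tau^U\le\tau<\xi,\ X_\tau\in U\}$: the path may leave $U$ strictly before $\tau$ and return to $U$ by time $\tau$, after which the $U$-stopped martingale carries no information whatsoever about $X$ after $\tau$. On this event no continuity of $f$ and $g$ can ``kill the boundary contribution'', and your claim that $\tau<\tau^V$ holds throughout as soon as $V\supset U$ contains $X_{\tau-}$ is wrong for the same reason: the path may have exited $V$ earlier and come back. The mechanism that actually works (and is the paper's) is quantitative rather than pathwise: for each fixed $V$ the identity under $\Qbf_X$ can fail only on an event of probability at most $\Pbf(\tau^V\le\tau<\xi)$, which tends to $0$ along an exhausting sequence $U_n\uparrow S$ because $\tau^{U_n}\uparrow\xi$; one therefore proves that $\Pbf$-almost surely the $U_n$-identities hold \emph{for all $n$ large enough}, and then recovers every $U\Subset S$ from some $U_n\supset U$ by stopping the resulting $\Qbf_x$-martingale at $\tau^U\le\tau^{U_n}$. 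In particular your opening plan of ``verifying each test $\Pbf$-a.s.\ and intersecting the null sets'' cannot work for a fixed $U$ --- for a fixed $U$ the identity genuinely fails with positive probability in general --- and the almost sure statement is only available in this ``eventually in $n$'' form.
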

\begin{proof}
Let $(f,g)$ be in $L$, $s_1\leq \cdots\leq s_k\leq s\leq t$ be in $\R_+$, $\varphi_1,\ldots,\varphi_k$ be in $\rmC(S^\Delta)$ and $U\Subset S$ be a open subset. Here and elsewhere we will denote by $E^{\Qbf_x}$ the expectation with respect to $\Qbf_x$. Since
\begin{multline*}
\mathds{1}_{\tau<\tau^U}E^{\Qbf_X}\Big[\big(f(X_{t\wedge\tau^U})-f(X_{s\wedge\tau^U})-\int_{s\wedge\tau^U}^{t\wedge\tau^U}g(X_u)\d u\big)\varphi_1(X_{s_1})\cdots \varphi_k(X_{s_k})\Big]\\
\overset{\Pbf\text{-a.s.}}{=}\mathds{1}_{\tau<\tau^U}\Ebf\Big[\big(f(X_{(t+\tau)\wedge\tau^U})-f(X_{(s+\tau)\wedge\tau^U})-\int_{(s+\tau)\wedge\tau^U}^{(t+\tau)\wedge\tau^U}g(X_u)\d u\big)\\
\times\varphi_1(X_{s_1+\tau})\cdots \varphi_k(X_{s_k+\tau})\,\Big|\,\Fc_{\tau+}\Big]
\overset{\Pbf\text{-a.s.}}{=}0,
\end{multline*}
we have
\begin{multline}\label{eqPropCondLMP}
\Ebf\Big(E^{\Qbf_X}\Big[\big(f(X_{t\wedge\tau^U})-f(X_{s\wedge\tau^U})-\int_{s\wedge\tau^U}^{t\wedge\tau^U}g(X_u)\d u\big)\varphi_1(X_{s_1})\cdots \varphi_k(X_{s_k})\Big]\not = 0\Big)\\
\leq\Pbf\big(\tau^U\leq\tau<\xi\big).
\end{multline}
Let $\widetilde{L}$ be a countable dense subset of $L$, $C$ be a countable dense subset of $\rmC(S^\Delta)$ and $U_n\Subset S$ be an increasing sequence of open subsets such that $S=\bigcup_nU_n$.
Then $\Qbf_X\in\Mc(L)$ if and only if for all $(f,g)\in \widetilde{L}$, $k\in\N$, for any $s_1\leq \cdots\leq s_k\leq s\leq t$ in $\Q_+$, for any $\varphi_1,\ldots,\varphi_k\in C$, and for $n$ large enough
\[
E^{\Qbf_X}\Big[\big(f(X_{t\wedge\tau^{U_n}})-f(X_{s\wedge\tau^{U_n}})-\int_{s\wedge\tau^{U_n}}^{t\wedge\tau^{U_n}}g(X_u)\d u\big)\varphi_1(X_{s_1})\cdots \varphi_k(X_{s_k})\Big]=0.
\]
Hence $\{\Qbf_X\in\Mc(L)\}$ is in $\Fc_{\tau+}$ and by \eqref{eqPropCondLMP}, $\Pbf$-almost surely $\Qbf_X\in\Mc(L)$.
\end{proof}
\begin{proposition}\label{propEUF}
Set $L\subset\rmC_0(S)\times\rmC(S)$. 
\begin{enumerate}
\item If there is uniqueness of the solution for the martingale local problem then for any $\mu\in\Pcal(S^\Delta)$ there is at most one element $\Pbf$ in $\Mc(L)$ such that $\law_\Pbf(X_0) =\mu$.
\item If there is existence of a solution for the martingale local problem and $\rmD(L)$ is dense in $\rmC_0(S)$, then for any $\mu\in\Pcal(S^\Delta)$ there exists an element $\Pbf$ in $\Mc(L)$ such that $\law_\Pbf(X_0) =\mu$.
\end{enumerate}
\end{proposition}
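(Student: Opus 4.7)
For part (1), the plan is to condition at time zero and use the pointwise uniqueness to obtain a disintegration formula depending only on $\mu$. Given $\Pbf_1,\Pbf_2\in\Mc(L)$ with $\law_{\Pbf_i}(X_0)=\mu$, I apply Proposition~\ref{propCondLMP} with $\tau=0$: each regular conditional distribution $\Qbf^{(i)}_\omega:=\law_{\Pbf_i}(X\mid\Fc_{0+})(\omega)$ belongs to $\Mc(L)$ for $\Pbf_i$-almost every $\omega$, and since $X_0$ is $\Fc_{0+}$-measurable it satisfies $\Qbf^{(i)}_\omega(X_0=X_0(\omega))=1$. For $a\in S$ the uniqueness hypothesis yields a single $\Qbf_a\in\Mc(L)$ starting from $a$. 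For $a=\Delta$ the condition $X_0=\Delta$ forces $\xi=0$ hence $x\equiv\Delta$, so $\Mc(L)\cap\{X_0=\Delta\}$ reduces to the Dirac on this constant path, handling the point at infinity without any extra hypothesis. Consequently $\Qbf^{(i)}_\omega=\Qbf_{X_0(\omega)}$ $\Pbf_i$-a.s., and integrating gives the disintegration
\[
\Pbf_i(X\in A)=\int_{S^\Delta}\Qbf_a(A)\,\mu(\d a),
\]
which is independent of $i$, so $\Pbf_1=\Pbf_2$.

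For part (2), the plan is to show that the image of $\Mc(L)$ under the initial-point map is all of $\Pcal(S^\Delta)$. By Proposition~\ref{propLMPP}(\ref{it3PropLMPP}) the density of $\rmD(L)$ in $\rmC_0(S)$ makes $\Mc(L)$ convex and compact in $\Pcal(\Dloc(S))$ for the local Skorokhod topology. The evaluation $X_0:\Dloc(S)\to S^\Delta$ is continuous---in any local Skorokhod convergence $x^k\to x$ the homeomorphisms $\lambda^k$ fix $0$, and the $t=0$ case of the convergence condition forces $d(x_0,x^k_0)\to 0$---so the pushforward $\Pi_0:\Pbf\mapsto\law_\Pbf(X_0)$ is continuous and affine on $\Mc(L)$. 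Hence $I:=\Pi_0(\Mc(L))$ is a compact convex subset of $\Pcal(S^\Delta)$. The existence hypothesis provides $\delta_a\in I$ for every $a\in S$; the trivial solution on the constant path $\Delta$ (for which $\tau^U=\xi=0$) supplies $\delta_\Delta\in I$. Since $S^\Delta$ is compact metrizable, finitely supported probability measures are weakly dense in $\Pcal(S^\Delta)$, so the closed convex hull of $\{\delta_a\}_{a\in S^\Delta}$ equals $\Pcal(S^\Delta)$, whence $I=\Pcal(S^\Delta)$.

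The main obstacle I anticipate is the careful treatment of the cemetery $\Delta$: the uniqueness and existence hypotheses are stated only for $a\in S$, yet $\mu$ may charge $\{\Delta\}$. Both parts circumvent this by observing the rigidity of $\{X_0=\Delta\}\subset\Dloc(S)$, a singleton consisting of the constant path at $\Delta$ (since $X_0=\Delta$ forces $\xi=0$), which yields a unique and trivially existing solution starting at $\Delta$. The two background ingredients---continuity of $X_0$ on $\Dloc(S)$ and weak density of finitely supported measures in $\Pcal(S^\Delta)$---are standard and invoked without further proof.
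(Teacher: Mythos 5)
Your proposal is correct and follows essentially the same route as the paper: part (1) by conditioning at time $0$, applying Proposition~\ref{propCondLMP} and invoking pointwise uniqueness to identify the disintegrations, and part (2) by using the convexity and compactness of $\Mc(L)$ from Proposition~\ref{propLMPP} together with the fact that the image under $\Pbf\mapsto\law_\Pbf(X_0)$ is a compact convex set containing all Dirac masses. Your explicit treatment of the cemetery point $\Delta$ (the constant path being the unique element of $\Dloc(S)$ with $X_0=\Delta$, and trivially a solution) is a welcome clarification of a step the paper leaves implicit when it writes $\{\delta_a\mid a\in S^\Delta\}\subset C$.
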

\begin{proof}
Suppose that we have uniqueness of the solution for the martingale local problem. Let $\mu$ be in $\Pcal(S^\Delta)$ and $\Pbf^1,\Pbf^2\in\Mc(L)$ be such that $\law_{\Pbf^1}(X_0)=\law_{\Pbf^2}(X_0) =\mu$. As in Remark \ref{rkExCL} let $\Qbf_\bullet,\Rbf_\bullet:S^\Delta\to\Pcal(\Dloc(S))$ be such that 
\[
\Qbf_{X_0}\overset{\Pbf^1\text{-a.s.}}{:=}\law_{\Pbf^1}\left(X\mid\Fc_0\right),\hspace{2cm}\Rbf_{X_0}\overset{\Pbf^2\text{-a.s.}}{:=}\law_{\Pbf^2}\left(X\mid\Fc_0\right).
\]
Then, by Proposition \ref{propCondLMP}, $\Qbf_{a},\Rbf_{a}\in\Mc(L)$ for $\mu$-almost all $a$, 
so, by uniqueness of the solution for the martingale local problem, $\Qbf_{a}=\Rbf_{a}$ for $\mu$-almost all $a$. We finally obtain $\Pbf^1=\int\Qbf_a\nu(\d a)=\int\Rbf_a\nu(\d a)=\Pbf^2$.

Suppose that we have existence of a solution for the martingale local problem and that $\rmD(L)$ is dense in $\rmC_0(S)$. Thanks to \ref{it3PropLMPP} from Proposition \ref{propLMPP} $\Mc(L)$ is convex and compact. Hence the set 
\[
C:=\{\mu\in\Pcal(S^\Delta)\,|\,\exists\Pbf\in\Mc(L)\,\mbox{ such that }\,\law_\Pbf(X_0) =\mu\}
\]
is convex and compact. Since there is existence of a solution for the martingale local problem we have $\left\{\delta_a\mid a\in S^\Delta\right\}\subset C$ so $C=\Pcal(S^\Delta)$.
\end{proof}

\section{Locally Feller families of probabilities}

In this section we will study a local counterpart of Feller families in connection 
with Feller semi-groups and martingale local problems. The basic notions and 
facts on Feller semi-groups can be founded in Chapter 19 pp. 367-389 from \cite{Ka02}.

\subsection{Feller families of probabilities}

Let $(\Gc_t)_{t\geq 0}$ be a filtration containing $(\Fc_t)_{t\geq 0}$. Recall that a family of probability measures 
$(\Pbf_a)_{a\in S}\in\Pcal(\Dloc(S))^S$ is called $(\Gc_t)_t$-Markov if, for any $B\in\Fc$, $a\mapsto\Pbf_a(B)$ 
is measurable, for any $a\in S$,  $\Pbf_a(X_0=a)=1$, and for any $B\in\Fc$, $a\in S$ and $t_0\in\R_+$
\[
\Pbf_a\left((X_{t_0+t})_t\in B\mid\Gc_{t_0}\right)=\Pbf_{X_{t_0}}(B),\;\Pbf_a-\text{almost surely},
\]
where $\Pbf_\Delta$ is the unique element of $\Pcal(\Dloc(S))$ such that $\Pbf_\Delta(\xi=0)=1$.
If the latter property is also satisfied by replacing $t_0$ with any $(\Gc_t)_t$-stopping time,
the family of probability measures  is $(\Gc_t)_t$-strong Markov. If $\Gc_t=\Fc_t$ we just say that the family 
is (strong) Markov. If $\nu$ is a measure on $S^\Delta$ we set $\Pbf_\nu:=\int\Pbf_a\nu(\d a)$. Then the 
distribution of $X_0$ under $\Pbf_\nu$ is $\nu$, and $\Pbf_\nu$ satisfies the (strong) Markov property.

\begin{definition}[Feller family]
A Markov family $(\Pbf_a)_a\in\Pcal(\Dloc(S))^S$ is said to be Feller if for all $f\in\rmC_0(S)$ and $t\in\R_+$ the function
\[\begin{array}{cccc}
T_tf:&S&\to&\R\\
&a&\mapsto&\Ebf_a[f(X_t)]
\end{array}\]
is in $\rmC_0(S)$. In this case it is no difficult to see that $(T_t)_t$ is a Feller 
semi-group on $\rmC_0(S)$ (see p. 369 in \cite{Ka02}) called the semi-group of 
$(\Pbf_a)_a$. 
Its generator $L$ is is the set of $(f,g)\in\rmC_0(S)\times\rmC_0(S)$ such that, for all $a\in S$
\[
\frac{T_tf(a)-f(a)}{t}\cv{t\to 0}g(a).
\]
and we call it the $\rmC_0\times\rmC_0$-generator of $(\Pbf_a)_a$.
\end{definition}
In \cite{vC92} Theorem 2.5, p. 283, one states a connection between Feller families 
and martingale problems. Unfortunately the proof given in the cited paper is correct 
only on a compact space $S$. The fact that a Feller family of probabilities is the unique 
solution of an appropriate martingale problem is stated in the proposition below. We will 
prove the converse of this result in Theorem \ref{FirstcharFeller}.

To give this statement we need to introduce some notations. For $L\subset\rmC_0(S)\times\rmC_0(S)$ we define
\begin{align}\label{eqLDlt}
L^\Delta:=\vect\left(L\cup\{(\mathds{1}_{S^\Delta},0)\}\right)\subset\rmC(S^\Delta)\times\rmC(S^\Delta).
\end{align}
We recall that we identified $\rmC_0(S)$ by the set of functions $f\in\rmC(S)$ such that $f(\Delta)=0$.
The set of solutions $\Mc(L^\Delta)\subset\Pcal(\Dloc(S^\Delta))$ of the martingale problem associated to $L^\Delta$ satisfies
\[
\forall\Pbf\in\Mc(L^\Delta),\quad\Pbf(X_0\in S^\Delta\Rightarrow X\in\D(S^\Delta))=0.
\]
Without loss of the generality, to study the martingale problem associated to $L^\Delta$ 
it suffices to  study the set of solution with $S^\Delta$-conservative paths:
\[
\Mcons(L^\Delta):=\Mc(L^\Delta)\cap\Pcal(\D(S^\Delta))
=\left\{\Pbf\in\Mc(L^\Delta)\mid\Pbf(X_0\in S^\Delta)=1\right\}.
\]
In fact $\Mcons(L^\Delta)$ is the set consisting of $\Pbf\in\Pcal(\D(S^\Delta))$ such that for all $(f,g)\in L$
\begin{align}\label{eqMgDlt}
f(X_t)-\int_0^tg(X_s)\d s\quad\text{ is a }\Pbf\text{-martingale}.
\end{align}
\begin{proposition}\label{propUniMgPb}
If $(T_t)_t$ is a Feller semi-group on $\rmC_0(S)$ with $L$ its generator, then there is
a unique Feller family $(\Pbf_a)_a$ with semi-group $(T_t)_t$. Moreover the martingale problem associate to $L^\Delta$ is well-posed and
\[
\Mcons(L^\Delta)=\{\Pbf_\mu\}_{\mu\in\Pcal(S^\Delta)}.
\]
\end{proposition}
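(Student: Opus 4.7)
The plan is to construct the Feller family via classical Feller theory, verify that each $\Pbf_\mu$ solves the martingale problem, and establish uniqueness via a Laplace transform argument combined with the conditioning proposition. For existence, I would extend $(T_t)_t$ to a Feller semigroup $(\widetilde T_t)_t$ on $\rmC(S^\Delta)\cong\rmC_0(S)\oplus\R\mathds{1}_{S^\Delta}$ by $\widetilde T_t(f+c\mathds{1}_{S^\Delta}):=T_tf+c\mathds{1}_{S^\Delta}$. Standard Feller theory (e.g.\ Theorem~19.15 in \cite{Ka02}) then provides a Markov family $(\Pbf_a)_{a\in S^\Delta}$ on $\D(S^\Delta)$ with semigroup $(\widetilde T_t)_t$; since $\widetilde T_t$ fixes constants, $\Delta$ is absorbing, so $\Pbf_\Delta$ is concentrated on the constant path $\Delta$ and for $a\in S$ the measures lie in $\Pcal(\Dloc(S)\cap\D(S^\Delta))$ and form a Feller family with semigroup $(T_t)_t$. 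Uniqueness of the Feller family is immediate, as finite-dimensional marginals are fully determined by the Markov property together with $(T_t)_t$. For $(f,g)\in L$ (so $g=Lf$), the Dynkin identity $T_tf-f=\int_0^tT_sg\,\d s$ and the Markov property show that $f(X_t)-\int_0^tg(X_s)\,\d s$ is a $\Pbf_a$-martingale, so $\Pbf_a\in\Mcons(L^\Delta)$ and, by integration in $a$, $\Pbf_\mu\in\Mcons(L^\Delta)$ for every $\mu\in\Pcal(S^\Delta)$.

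For uniqueness of solutions, I would fix $\Pbf\in\Mcons(L^\Delta)$, $\lambda>0$, and $h\in\rmC_0(S)$, and let $f:=R_\lambda h\in\rmD(L)$, where $R_\lambda$ denotes the Feller resolvent, so that $Lf-\lambda f=-h$. Integration by parts applied to the $\Pbf$-martingale $f(X_t)-\int_0^tLf(X_s)\,\d s$ against the deterministic factor $e^{-\lambda t}$ shows that
\[
e^{-\lambda t}f(X_t)-f(X_0)+\int_0^te^{-\lambda s}h(X_s)\,\d s
\]
is again a $\Pbf$-martingale. Letting $t\to\infty$ and applying bounded convergence inside $\Ebf_\Pbf[\,\cdot\mid\Fc_0]$ yields
\[
R_\lambda h(X_0)=\Ebf_\Pbf\!\left[\int_0^\infty e^{-\lambda s}h(X_s)\,\d s\,\Big|\,\Fc_0\right].
\]
By Fubini, both sides are the Laplace transforms (in $s$) of $s\mapsto T_sh(X_0)$ and $s\mapsto\Ebf_\Pbf[h(X_s)\mid\Fc_0]$; both maps are right-continuous, the first by strong continuity of $(T_t)_t$, the second because paths are right-continuous under $\Pbf\in\Pcal(\D(S^\Delta))$, so Laplace injectivity gives $\Ebf_\Pbf[h(X_s)\mid\Fc_0]=T_sh(X_0)$ for every $s\geq 0$ and every $h\in\rmC_0(S)$.

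Finally, to promote this identity to the full Markov property, I would invoke Proposition \ref{propCondLMP}: the regular conditional distribution $\Qbf_X:=\law_\Pbf((X_{t+\cdot})\mid\Fc_{t+})$ lies $\Pbf$-a.s.\ in $\Mcons(L^\Delta)$, since the shifted paths remain in $\D(S^\Delta)$ because $\Pbf$ itself is carried there. Applying the previous step to $\Qbf_X$ yields $\Ebf_\Pbf[h(X_{t+s})\mid\Fc_{t+}]=T_sh(X_t)$, and iterating this identifies every finite-dimensional marginal of $\Pbf$ as a function of $(T_t)_t$ and $\law_\Pbf(X_0)$. Therefore $\Pbf=\Pbf_{\law_\Pbf(X_0)}$, which simultaneously gives well-posedness of the martingale problem and the identification $\Mcons(L^\Delta)=\{\Pbf_\mu\}_{\mu\in\Pcal(S^\Delta)}$. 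The main subtlety I anticipate is the Laplace inversion step: ensuring $s\mapsto\Ebf_\Pbf[h(X_s)\mid\Fc_0]$ is right-continuous so that a.e.\ equality in $s$ can be upgraded to pointwise equality, which is precisely why one must work with the $S^\Delta$-conservative subclass $\Mcons$ rather than all of $\Mc(L^\Delta)$.
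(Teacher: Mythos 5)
Your proposal is correct, but it proves the key steps by a different route than the paper. The overall skeleton is shared: both you and the authors use the conditioning result (Proposition \ref{propCondLMP}) to reduce the identification $\Mcons(L^\Delta)=\{\Pbf_\mu\}_\mu$ to showing that any $\Pbf\in\Mcons(L^\Delta)$ has one-dimensional marginals governed by $(T_t)_t$. Where you diverge is in how the two halves are handled. For existence, the paper does not invoke the classical construction of a Feller process: it obtains a solution of the martingale problem from its own Theorem \ref{thmExMP} (positive maximum principle plus Ethier--Kurtz), so the Feller family is in effect recovered from the martingale-problem solutions, keeping the argument inside the paper's machinery; you instead build the process from classical semigroup theory (extension of $(T_t)_t$ to $\rmC(S^\Delta)$, Kallenberg's existence theorem, absorption at $\Delta$) and verify the martingale property via Dynkin's formula, which is shorter but outsources more. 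For the marginal identification, the paper telescopes $\Ebf[f(X_t)\mid\Fc_0]-T_tf(X_0)$ over a subdivision of $[0,t]$, using that $T_{t-s}f\in\rmD(L)$ and concluding by dominated convergence on $\big|LT_{t-t_{i+1}}f(X_s)-LT_{t-s}f(X_{t_i})\big|$; you use the resolvent/Laplace-transform argument in the style of Ethier--Kurtz Theorem 4.4.1. Both are sound. The paper's telescoping avoids the Laplace-inversion bookkeeping entirely, at the price of manipulating $LT_{t-s}f$; your resolvent route avoids that but requires exactly the care you flag, namely upgrading a.e.\ equality in $s$ (and per-$\lambda$ null sets) to pointwise equality, which is cleanly handled by working with the regular conditional distribution of Remark \ref{rkExCL} (so that $s\mapsto E^{\Qbf_X}[h(X_s)]$ is genuinely right-continuous for each $\omega$) and intersecting null sets over countable dense families of $\lambda$ and $h$. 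With that standard fix spelled out, your argument delivers the same conclusion, including well-posedness, since uniqueness within $\Mcons(L^\Delta)$ is precisely what the paper itself establishes.
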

\begin{remark}\label{rkMPinCompactified}
1. For any $\Pbf\in\Mcons(L^\Delta)$ the distribution of $X^{\tau^S}$ under $\Pbf$ satisfies
\[
\law_\Pbf(X^{\tau^S})\in\Mcons(L^\Delta)\cap\Dloc(S)\subset\Mc(L).
\]
Moreover if $\rmD(L)$ is dense in $\rmC_0(S)$, thanks to \ref{it4PropLMPP} from Proposition \ref{propLMPP}
\begin{align*}
\Mc(L)=\Mcons(L^\Delta)\cap\Dloc(S).
\end{align*}
So if $\rmD(L)$ is dense in $\rmC_0(S)$ there is existence of a solution for the martingale problem associated to $L$ if and only if there is existence of a solution to the martingale problem associated to $L^\Delta$. Moreover the uniqueness of the solution for the martingale problem associated to $L^\Delta$ imply uniqueness of the solution for the martingale problem associated to $L$.\\
2. If $S$ is compact and $\rmD(L)$ is dense in $\rmC_0(S)=\rmC(S)$, then it is straightforward to obtain $\Mc(L)=\Mcons(L^\Delta)$.
\end{remark}

\noindent
For the sake of completeness we give: 
\begin{proof}[Proof of Proposition \ref{propUniMgPb}]
The existence of a solution for the martingale problem is a consequence of Theorem \ref{thmExMP}.  
Thanks to Proposition \ref{propCondLMP}, to prove our result we need to prove that 
\[
\forall\Pbf\in\Mcons(L^\Delta),\,\forall t\geq 0,\,\forall f\in\rmD(L),\;
\Ebf\left[f(X_t)\right]=\Ebf\left[T_tf(X_0)\right]
\]
Let $0=t_0\leq\cdots\leq t_{N+1}=t$ be a subdivision of $[0,t]$, then
\begin{align*}
\Ebf\left[f(X_t)\mid\Fc_0\right]-T_tf(X_0)
& =\sum_{i=0}^N\Ebf\left[T_{t-t_{i+1}}f(X_{t_{i+1}})\mid\Fc_0\right]-\Ebf\left[T_{t-t_i}f(X_{t_i})\mid\Fc_0\right] \\
& =\sum_{i=0}^N\Ebf\left[\Ebf\left[T_{t-t_{i+1}}f(X_{t_{i+1}})\mid\Fc_{t_i}\right]-T_{t-t_i}f(X_{t_i})\mid\Fc_0\right].
\end{align*}
Moreover for each $i\in\{0,\ldots N\}$, using martingales properties for the first part and semi-groups properties (see for instance Theorem 19.6, p. 372 in \cite{Ka02}) for the second
\begin{equation*}
\Ebf\Big[T_{t-t_{i+1}}f(X_{t_{i+1}})\,\big|\,\Fc_{t_i}\Big]-T_{t-t_i}f(X_{t_i})
=\Ebf\Big[\int_{t_i}^{t_{i+1}}LT_{t-t_{i+1}}f(X_s)-LT_{t-s}f(X_{t_i})\d s\,\big|\,\Fc_{t_i}\Big],
\end{equation*}
so
\begin{align*}
\left|\Ebf\left[f(X_t)-T_tf(X_0)\right]\right|
& \leq \Ebf\sum_{i=0}^N\int_{t_i}^{t_{i+1}}\left|LT_{t-t_{i+1}}f(X_s)-LT_{t-s}f(X_{t_i})\right|\d s .
\end{align*}
By dominated convergence we can conclude.
\end{proof}
Before introducing the definition of a locally Feller family, let us state a result 
on an application of a time change to a Feller family:
\begin{proposition}\label{propTCFel}
Let $(\Pbf_a)_a\in\Pcal(\Dloc(S))^S$ be a Feller family with $\rmC_0\times\rmC_0$-generator $L$. Then, for any $g\in\rmC_b(S,\R_+^*)$, $(g\mycdot\Pbf_a)_a$ is a Feller family with $\rmC_0\times\rmC_0$-generator $\overline{gL}$, taking the closure in $\rmC_0(S)\times\rmC_0(S)$.
\end{proposition}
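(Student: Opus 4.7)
The strategy is to transfer the Feller structure from $(\Pbf_a)_a$ to $(g\mycdot\Pbf_a)_a$ through the martingale local problem, exploiting the invertibility of the time change by a strictly positive continuous bounded function. First, Proposition \ref{propUniMgPb} combined with Remark \ref{rkMPinCompactified} ensures $\Pbf_a\in\Mc(L)$ for every $a\in S$. Item \ref{it1PropLMPP} of Proposition \ref{propLMPP} then yields
\[
g\mycdot\Pbf_a\in\Mc(gL)=\Mc\left(\overline{gL}\right),
\]
the second equality coming from item \ref{it2PropLMPP}. This settles the existence side of the martingale local problem associated to $\overline{gL}$.

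Next, since $g\in\rmC_b(S,\R_+^*)$, so is $1/g$. If $\Qbf\in\Mc(\overline{gL})=\Mc(gL)$ with $\Qbf(X_0=a)=1$, then applying item \ref{it1PropLMPP} of Proposition \ref{propLMPP} together with the composition identity \eqref{eqProp2TC} gives $\tfrac{1}{g}\mycdot\Qbf\in\Mc(L)$, still concentrated at $a$. The well-posedness of the martingale local problem for $L$, guaranteed by Proposition \ref{propUniMgPb}, forces $\tfrac{1}{g}\mycdot\Qbf=\Pbf_a$, whence $\Qbf=g\mycdot\tfrac{1}{g}\mycdot\Qbf=g\mycdot\Pbf_a$ by \eqref{eqProp2TC}. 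The martingale local problem for $\overline{gL}$ is therefore well-posed, and combined with Proposition \ref{propCondLMP} and Proposition \ref{propEUF} this yields the (strong) Markov property of $(g\mycdot\Pbf_a)_a$.

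To upgrade the Markov family to a Feller family one needs weak continuity of $a\mapsto g\mycdot\Pbf_a$ from $S^\Delta$ to $\Pcal(\Dloc(S))$. The Feller property of $(\Pbf_a)_a$ provides weak continuity of $a\mapsto\Pbf_a$ for the local Skorokhod topology; I then invoke continuity of the pushforward $\mu\mapsto g\mycdot\mu$ on $\Pcal(\Dloc(S))$ for strictly positive bounded continuous $g$, a property of the time change established in \cite{GH017}. The boundary value is trivially preserved, $g\mycdot\Pbf_\Delta=\Pbf_\Delta$, since both measures are supported on the constant path equal to $\Delta$. Having the Feller family in hand, it remains to identify the generator $\widetilde L$: the inclusion $\overline{gL}\subset\widetilde L$ follows by differentiating the martingale identity obtained in the first step, exactly as in item 4 of Remark \ref{rkBLMP}. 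Conversely $\overline{gL}$ satisfies the positive maximum principle (since $L$ does and $g\geq 0$, as noted in Remark \ref{rqunivariate}) and its domain is dense; the maximality statement built into the Hille--Yosida characterisation of Feller generators (any extension of $\overline{gL}$ satisfying the positive maximum principle on a dense domain must coincide with $\overline{gL}$) then forces $\widetilde L=\overline{gL}$.

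The delicate ingredient is the weak continuity of the pushforward $\mu\mapsto g\mycdot\mu$ and the identification of the limit at $\Delta$. Although $g>0$ everywhere, without a uniform lower bound the time change can stretch paths arbitrarily as one approaches $\Delta$, so one must argue carefully that the rescaled laws still converge to $\Pbf_\Delta$ for the local Skorokhod topology. This is the step where the machinery of \cite{GH017}, in particular Proposition \ref{propEqLocGlo} relating the local and global Skorokhod topologies through a time change, does the essential work; the remaining steps are largely formal manipulations of martingale problems.
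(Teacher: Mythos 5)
Your proposal has a genuine gap at exactly the point where the hypothesis $g\in\rmC_b$ (boundedness) must enter, and this is not an incidental detail: for merely continuous strictly positive $g$ the time-changed family is only \emph{locally} Feller (Remark \ref{construirelFeller} ii), not Feller, so any argument that never uses the upper bound on $g$ cannot prove the statement. Your route to the Feller property is weak continuity of $a\mapsto g\mycdot\Pbf_a$ on $S^\Delta$ obtained from continuity of the pushforward $\mu\mapsto g\mycdot\mu$ on $\Pcal(\Dloc(S))$. But that gives continuity only for the \emph{local} Skorokhod topology, and continuity at $\Delta$ for the local topology is vacuous: in $\Dloc(S)$ any sequence of paths with $x^k_0\to\Delta$ converges to the constant path $\Delta$, so $g\mycdot\Pbf_{a_n}\to\Pbf_\Delta$ locally whenever $a_n\to\Delta$, with no information gained. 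The Feller property additionally requires the behaviour at infinity, i.e.\ condition \eqref{conditionsupplementairevC} (or item \ref{propEqFlFen5} of Theorem \ref{SecondCharFeller}) for the time-changed family; the natural fix is to observe that since $\tau^g_t\leq\|g\|t$, the event that $g\mycdot X$ enters a compact $K$ before new time $t$ is contained in the event that $X$ enters $K$ before old time $\|g\|t$, whose probability vanishes as $a\to\Delta$ by the Feller property of $(\Pbf_a)_a$ — this is where boundedness works, and it is absent from your argument. Deferring the issue to Proposition \ref{propEqLocGlo} does not help: that proposition \emph{constructs} a special time change adapted to the family (vanishing suitably), it says nothing about an arbitrary prescribed bounded $g$; and your worry about the \emph{lower} bound of $g$ points at the wrong obstruction.

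The second gap is the identification of the $\rmC_0\times\rmC_0$-generator. From the martingale identity you correctly get $\overline{gL}\subset\widetilde L$, but your closing maximality argument is applied in the wrong direction: the Hille--Yosida maximality (Kallenberg, Lemma 19.12) says a Feller \emph{generator} has no proper extension satisfying the positive maximum principle; it does not say that a densely defined PMP operator such as $\overline{gL}$ has no proper PMP extension. To conclude $\widetilde L=\overline{gL}$ one needs to know that $\overline{gL}$ is itself a Feller generator (equivalently the range condition for $\lambda-gL$), and that is precisely the analytic content the paper imports: its proof is essentially a citation of Lumer's multiplicative perturbation theorem (Theorem 2, p.~275 in \cite{Lu73}), stating that $\overline{gL}$ generates a Feller semigroup for $g\in\rmC_b(S,\R_+^*)$, combined with Proposition \ref{propUniMgPb} and part \ref{it1PropLMPP} of Proposition \ref{propLMPP} to identify the associated Feller family with $(g\mycdot\Pbf_a)_a$. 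Note also that you cannot patch this step via Proposition \ref{propGenFF} or Remark \ref{rkTCGen}, since in the paper's architecture those results are proved \emph{using} Proposition \ref{propTCFel}, so that would be circular. Your reduction of well-posedness of the martingale local problem for $gL$ by inverting the time change with $1/g$ is fine (it needs only $1/g\in\rmC(S,\R_+^*)$), but the two steps above are the heart of the proposition and remain unproved.
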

\begin{proof}
Thanks to the first part of Proposition \ref{propLMPP} and to the Proposition \ref{propUniMgPb}, the result is only a reformulation of Theorem 2, p. 275 in \cite{Lu73}. For the sake of completeness we give the statement of this result in our context: if $L\subset\rmC_0(S)\times\rmC_0(S)$ is the generator of a Feller semi-group, then for any $g\in\rmC_b(S,\R_+^*)$, $\overline{gL}$ is the generator of a Feller semi-group. 
\end{proof}

\subsection{Local Feller families and connection with martingale problems}

We are ready to introduce the notion of locally Feller family of probabilities. This is given in 
the following theorem which proof  is technical and it is postponed to the Appendix \S
\ref{secProofslocallyFeller}
\begin{theorem}[Definition of a locally Feller family]\label{thmDefLFF}
If $(\Pbf_a)_a\in\Pcal(\Dloc(S))^S$, the following four assertions are equivalent:
\begin{enumerate}
\item\label{item1thmDefLFF} (continuity) the family $(\Pbf_a)_a$ is Markov and $a\mapsto\Pbf_a$ is continuous for the local Skorokhod topology;
\item\label{item2thmDefLFF} (time change) there exists $g\in\rmC(S,\R_+^*)$ such that $(g\mycdot\Pbf_a)_a$ is a Feller family;
\item\label{item3thmDefLFF} (martingale) there exists $L\subset\rmC_0(S)\times\rmC(S)$ such that $\rmD(L)$ is dense in $\rmC_0(S)$ and
\[
\forall a\in S,\quad
\Pbf\in\Mc(L)\,\mbox{ and }\,\Pbf(X_0=a)=1
\Longleftrightarrow
\Pbf=\Pbf_a;
\]
\item\label{item4thmDefLFF} (localisation) for any open subset $U\Subset S$ there 
exists a Feller family $(\widetilde{\Pbf}_a)_a$ such that for any $a\in S$
\[
\law_{\Pbf_a}\left(X^{\tau^U}\right) = \law_{\widetilde{\Pbf}_a}\left(X^{\tau^U}\right).
\]
\end{enumerate}
We will call a such family a locally Feller family. 

\noindent
Moreover a locally Feller family $(\Pbf_a)_a$ is $(\Fc_{t+})_t$-strong Markov and for all $\mu \in \Pcal(S^\Delta)$, $\Pbf_\mu$ is quasi-continuous.
\end{theorem}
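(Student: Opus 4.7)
The plan is to make condition (2) the hub of the equivalence: I will prove (2) $\Rightarrow$ (1), (2) $\Rightarrow$ (3), (2) $\Rightarrow$ (4), and then (1) $\Rightarrow$ (2), (3) $\Rightarrow$ (2), (4) $\Rightarrow$ (2). The forward direction is essentially a translation of the classical Feller theory (via Propositions \ref{propLMPP}, \ref{propUniMgPb}, \ref{propTCFel}) through the time change $g^{-1}$; the reverse direction relies crucially on Proposition \ref{propEqLocGlo} to manufacture the required $g \in \rmC(S,\R_+^*)$.

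For (2) $\Rightarrow$ (1): if $(g\mycdot\Pbf_a)_a$ is Feller with $g>0$, then $a\mapsto g\mycdot\Pbf_a$ is weakly continuous in the global Skorokhod topology (by standard Feller theory), and the identity $\Pbf_a=\frac{1}{g}\mycdot(g\mycdot\Pbf_a)$ from \eqref{eqProp2TC} together with $(\rmC$-)continuity of the time change $\frac{1}{g}\mycdot$ yields local Skorokhod continuity of $a\mapsto\Pbf_a$; the Markov property transfers from $(g\mycdot\Pbf_a)$ to $(\Pbf_a)$ via the same time change. For (2) $\Rightarrow$ (3): if $L_0$ is the $\rmC_0\times\rmC_0$-generator of $(g\mycdot\Pbf_a)_a$, take $L:=\frac{1}{g}L_0$; Proposition \ref{propLMPP} \ref{it1PropLMPP} gives $\Pbf_a\in\Mc(L)$, and uniqueness follows from $\Mc(L)\ni\Qbf\Leftrightarrow g\mycdot\Qbf\in\Mc(L_0)$ together with the uniqueness in Proposition \ref{propUniMgPb} applied to $L_0^\Delta$ (using Remark \ref{rkMPinCompactified}). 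For (2) $\Rightarrow$ (4): given $U\Subset S$, pick $h\in\rmC_b(S,\R_+^*)$ with $h\equiv1/g$ on a neighbourhood of $\overline U$; then Proposition \ref{propTCFel} shows $((hg)\mycdot\Pbf_a)_a$ is Feller, and since $hg\equiv1$ on $U$, the time change is trivial up to $\tau^U$, so the stopped laws agree.

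The reverse implications all reduce to producing a Feller-compatible time change. Each of (1), (3), (4) implies local Skorokhod continuity of $a\mapsto\Pbf_a$: for (1) it is given; for (4), continuity of the family $(\widetilde\Pbf_a)_a$ on a covering of compacts implies that of $(\Pbf_a)$ by a diagonal/exhaustion argument using $U_n\Subset S$ with $\bigcup U_n=S$; for (3), one combines existence via Theorem \ref{thmExMP} with the tightness/continuity part of Proposition \ref{propUSCLMP} (taking $L_n\equiv L$). Once continuity is in hand, Proposition \ref{propEqLocGlo} applied with $\widetilde S=S$ and $U=S$ produces $g\in\rmC(S,\R_+^*)$ such that $a\mapsto g\mycdot\Pbf_a$ is weakly continuous for the global Skorokhod topology on $\D(S^\Delta)$, with the limit paths living in $\D(S^\Delta)$. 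The \emph{main obstacle} is verifying the Feller semigroup property $T_t^g f\in\rmC_0(S)$ for $f\in\rmC_0(S)$: continuity of $a\mapsto T_t^g f(a)=\Ebf_{g\mycdot\Pbf_a}[f(X_t)]$ is immediate from the global Skorokhod continuity, but vanishing at infinity requires that $g$ decay fast enough at $\Delta$ so that $g\mycdot\Pbf_a$ concentrates near $\Delta$ for $a$ near $\Delta$; this is exactly the extra assertion built into Proposition \ref{propEqLocGlo} (paths reaching $\Delta$ in finite time), combined with the Markov property of $(\Pbf_a)$ to propagate it across time $t$.

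Finally, the strong Markov property follows from (3) and Proposition \ref{propCondLMP}: for any $(\Fc_{t+})_t$-stopping time $\tau$ and any $a\in S$, the regular conditional law $\Qbf_X:=\law_{\Pbf_a}((X_{\tau+t})_t\mid\Fc_{\tau+})$ lies in $\Mc(L)$ $\Pbf_a$-a.s., and starts at $X_\tau$ by construction, hence equals $\Pbf_{X_\tau}$ by the uniqueness clause in (3). The $(\Fc_{t+})_t$-quasi-continuity of $\Pbf_a$ is Proposition \ref{propLMPP} \ref{it4PropLMPP} applied to the $L$ from (3) (whose domain is dense); integrating against $\mu\in\Pcal(S^\Delta)$ gives quasi-continuity of $\Pbf_\mu=\int\Pbf_a\,\mu(\d a)$ by dominated convergence on the bounded test functionals.
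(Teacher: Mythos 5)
Your overall toolkit and the forward implications are close to the paper's: (2)$\Rightarrow$(3) via the $\rmC_0\times\rmC_0$-generator of $(g\mycdot\Pbf_a)_a$, Proposition \ref{propUniMgPb} and the time-change part of Proposition \ref{propLMPP}, and (2)$\Rightarrow$(4) via a multiplier equal to $1$ on $U$ and Proposition \ref{propTCFel}, are exactly the paper's arguments, and your treatment of the ``moreover'' part (strong Markov via Proposition \ref{propCondLMP}, quasi-continuity via Proposition \ref{propLMPP} \ref{it4PropLMPP}) also matches. The gaps are in the reverse directions. First, in (2)$\Rightarrow$(1) you invoke ``continuity of the time change $\frac{1}{g}\mycdot$'' to push global Skorokhod continuity of $a\mapsto g\mycdot\Pbf_a$ down to local continuity of $a\mapsto\Pbf_a=\frac{1}{g}\mycdot(g\mycdot\Pbf_a)$. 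No such continuity is established anywhere: the paper only records that $x\mapsto g\mycdot x$ is $\Fc$-measurable, and every transfer of continuity goes through the specially constructed $g$ of Proposition \ref{propEqLocGlo}, in the opposite direction. The paper obtains continuity of $a\mapsto\Pbf_a$ from assertion (3) instead: $\Mc(L)$ is compact (Proposition \ref{propLMPP} \ref{it3PropLMPP}) and $\Pbf_a\mapsto a$ is a continuous injection on a compact set, so its inverse is continuous --- which is essentially your own tightness-plus-uniqueness argument for (3); so this gap is repairable by rerouting (2)$\Rightarrow$(3)$\Rightarrow$(1), but not as written.

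Second, and more seriously, you misidentify the main obstacle in passing from a continuous Markov family to a Feller family after time change. Continuity of $a\mapsto\Ebf_{g\mycdot\Pbf_a}[f(X_t)]$ is \emph{not} ``immediate from the global Skorokhod continuity'': evaluation at a fixed time $t$ is not continuous on $\D(S^\Delta)$, only at paths with $X_{t-}=X_t$. The core of the paper's proof of (1)$\Rightarrow$(2) is precisely to prove $g\mycdot\Pbf_a(X_{t-}=X_t)=1$ for every $t>0$, using the Markov property through the approximation $\frac{1}{\eps}\int_s^{s+\eps}f(X_u)\d u$, the identification $g\mycdot\Ebf_a[f(X_t)\mid\Fc_{t-}]=f(X_{t-})$, and then $g\mycdot\Ebf_a(f(X_t)-f(X_{t-}))^2=0$; your proposal skips this entirely, while the vanishing-at-infinity point you stress is the easy part (continuity on the compact $S^\Delta$ with $\Pbf_\Delta$ the dead path already gives it). Third, in (4)$\Rightarrow$(2) you use ``the Markov property of $(\Pbf_a)_a$'', but under hypothesis (4) the family is not assumed Markov: the paper needs Lemma \ref{lemContandM} to transfer the Markov property from the Feller approximants $(\Pbf^n_a)_a$ to the limit family, and your ``diagonal/exhaustion argument'' for continuity is the content of Lemma \ref{lemLocCont}, whose proof rests on the refined tightness criterion \eqref{part3thmTension} and Proposition \ref{propEqLocGlo}, not on a routine diagonal extraction. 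Without substitutes for these two lemmas, the implication from (4) is not proved.
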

\begin{remark}\label{construirelFeller}
A natural question is how can we construct locally Feller families? We 
give here answers to this question.
\begin{itemize}
\item[i)]  A Feller family is locally Feller.
\item[ii)] If $g\in\rmC(S,\R_+^*)$  and $(\Pbf_a)_a\in\Pcal(\Dloc(S))^S$ is locally Feller, then $(g\mycdot\Pbf_a)_a$ is locally Feller. This result is to be compared with the result 
of Proposition \ref{propTCFel}.
\item[iii)] If $S$ is a compact space, a family is locally Feller if and only if it is Feller. This sentence is an easy consequence of the third part of the latter theorem and of Proposition 
\ref{propTCFel}.
\item[iv)]  As consequence of the first assertion in Theorem \ref{thmDefLFF},
if $(\Pbf_a)_a\in\Pcal(\Dloc(S))^S$ is locally Feller then the family
\[\begin{array}{ccc}
U&\to&\Pcal(\Dloc(U))\\
a&\mapsto&\law_{\Pbf_a}(\widetilde{X})
\end{array}\]
is locally Feller in the space $U$. Indeed, it is straightforward to verify that, for any open subset $U\subset S$, the following mapping is continuous,
\[
\begin{array}{ccc}
\Dloc(S)&\to&\Dloc(U)\\
x&\mapsto&\widetilde{x}
\end{array}
\quad\text{ with }\quad
\widetilde{x}_s:=\left\{\begin{array}{ll}
x_s&\text{if }s<\tau^U(x),\\
\Delta &\text{otherwise.}
\end{array}\right.
\]
\end{itemize}
\par\vspace{-1.7\baselineskip}\qedhere
\end{remark}


Since a locally  Feller family on $S^\Delta$ is also Feller we can deduce from Theorem 
\ref{thmDefLFF} a characterisation of Feller families in terms of martingale problem. The following 
theorem is the converse of Proposition \ref{propUniMgPb} and provide a first correction of the 
result Theorem 2.5,  p. 283 in \cite{vC92}.
\begin{theorem}[Feller families - first characterisation]\label{FirstcharFeller}
Let $(\Pbf_a)_a\in\Pcal(\Dloc(S))^S$ be, the following assertions are equivalent:
\begin{enumerate}
\item\label{propEqFlFen1} $(\Pbf_a)_a$ is Feller;
\item\label{propEqFlFen2} the family $(\Pbf_a)_a$ is Markov, $\Pbf_a\in\Pcal(\D(S^\Delta))$ for any $a\in S$, and $S^\Delta\ni a\mapsto\Pbf_a$ is continuous for the global Skorokhod topology;
\item\label{propEqFlFen3} there exists $L\subset\rmC_0(S)\times\rmC_0(S)$ such that $\rmD(L)$ is dense in $\rmC_0(S)$ and
\[
\forall a\in S^\Delta,\quad
\Pbf\in\Mcons(L^\Delta)\,\mbox{ and }\,\Pbf(X_0=a)=1
\Longleftrightarrow
\Pbf=\Pbf_a.
\]
We recall that $\Pbf_\Delta$ is defined by $\Pbf_\Delta(\forall t\geq 0,~X_t=\Delta)=1$.
\end{enumerate}
\end{theorem}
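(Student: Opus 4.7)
The plan is to lift the problem to the compactified space $S^\Delta$, where, by Remark \ref{construirelFeller}~(iii), locally Feller families and Feller families coincide. Throughout I extend $(\Pbf_a)_{a\in S}$ to $S^\Delta$ by setting $\Pbf_\Delta$ to be the Dirac mass at the constant path equal to $\Delta$, and I use that on the compact space $S^\Delta$ one has $\Dloc(S^\Delta)=\D(S^\Delta)$ with the local and global Skorokhod topologies agreeing. For the equivalence (1)$\Leftrightarrow$(2), first observe that the Feller property of $(\Pbf_a)_{a\in S}$ is equivalent to the Feller property of the extended family on $S^\Delta$: indeed, writing any $f\in\rmC(S^\Delta)$ as $(f-f(\Delta))+f(\Delta)\mathds{1}_{S^\Delta}$ with $f-f(\Delta)\in\rmC_0(S)$, the triviality of $\Pbf_\Delta$ gives $T_tf(\Delta)=f(\Delta)$, so $T_t$ preserves $\rmC_0(S)$ iff it preserves $\rmC(S^\Delta)$. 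Combining Theorem \ref{thmDefLFF} item \ref{item1thmDefLFF} with Remark \ref{construirelFeller}~(iii) on the compact space $S^\Delta$ then shows that this is in turn equivalent to the extended family being Markov and continuous for the global Skorokhod topology on $\D(S^\Delta)$, which is (2).

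For (1)$\Rightarrow$(3), I take $L$ to be the $\rmC_0\times\rmC_0$-generator of the Feller semi-group of $(\Pbf_a)_a$; its domain $\rmD(L)$ is dense in $\rmC_0(S)$ by Hille--Yosida. Proposition \ref{propUniMgPb} then yields directly $\Mcons(L^\Delta)=\{\Pbf_\mu\}_{\mu\in\Pcal(S^\Delta)}$, which is exactly the characterisation (3).

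For (3)$\Rightarrow$(1), the idea is to apply Theorem \ref{thmDefLFF} item \ref{item3thmDefLFF} to the extended family on $S^\Delta$ using the operator $L^\Delta\subset\rmC(S^\Delta)\times\rmC(S^\Delta)$. The decomposition $\rmC(S^\Delta)=\rmC_0(S)\oplus\R\cdot\mathds{1}_{S^\Delta}$ together with the density of $\rmD(L)$ in $\rmC_0(S)$ gives that $\rmD(L^\Delta)=\rmD(L)+\R\cdot\mathds{1}_{S^\Delta}$ is dense in $\rmC(S^\Delta)$. Moreover, since $S^\Delta$ is compact, taking $U=S^\Delta$ (for which $\tau^{S^\Delta}=\xi=\infty$) in the definition of the martingale local problem on $\Dloc(S^\Delta)=\D(S^\Delta)$ shows that this problem reduces to the classical martingale problem, that is, the set of its solutions equals $\Mcons(L^\Delta)$. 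Hypothesis (3) thus becomes exactly the characterisation required by Theorem \ref{thmDefLFF} item \ref{item3thmDefLFF}, so the extended family is locally Feller on $S^\Delta$, hence Feller by Remark \ref{construirelFeller}~(iii), and restricting to $\rmC_0(S)$ as in the argument for (2)$\Rightarrow$(1) yields (1). The main technical obstacle lies in this last implication: one must verify carefully both that the martingale local problem for $L^\Delta$ on compact $S^\Delta$ does coincide with $\Mcons(L^\Delta)$ and that the hypotheses of Theorem \ref{thmDefLFF} are correctly matched by the compactified data $L^\Delta$ (notably, density of $\rmD(L^\Delta)$ and identification of $\Dloc(S^\Delta)$ with $\D(S^\Delta)$).
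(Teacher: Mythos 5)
Your overall route --- compactify to $S^\Delta$, identify the Feller property of $(\Pbf_a)_{a\in S}$ with the (locally) Feller property of the extended family on the compact space $S^\Delta$ via Remark \ref{construirelFeller}(iii), and then invoke Theorem \ref{thmDefLFF} together with Proposition \ref{propUniMgPb} --- is exactly the paper's. But two steps, as written, have genuine gaps. The first is the claim that $\Dloc(S^\Delta)=\D(S^\Delta)$ with $\tau^{S^\Delta}=\xi=\infty$. This is false: when $S^\Delta$ is taken as the base space, its compactification adjoins a \emph{new} isolated cemetery point, and $\Dloc(S^\Delta)$ contains paths that jump to that point at a finite time, so $\xi$ is not identically $\infty$ and your reduction, in (3)$\Rightarrow$(1), of the martingale local problem for $L^\Delta$ to the classical one is not justified as stated. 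What is true, and what you need, is that every $\Pbf\in\Mc(L^\Delta)$ charging only starting points in $S^\Delta$ satisfies $\xi=\infty$ almost surely: since $(\1_{S^\Delta},0)\in L^\Delta$ and $S^\Delta\Subset S^\Delta$, the process $\1_{S^\Delta}(X_{t\wedge\xi})$ is a martingale started at $1$, which forces $\Pbf(\xi>t)=1$ for all $t$; such solutions are then carried by $\D(S^\Delta)$, where the local and global topologies do agree and the local problem reduces to \eqref{eqMgDlt} (this is the content of the discussion before Proposition \ref{propUniMgPb} and of Remark \ref{rkMPinCompactified}). With that argument inserted, (3)$\Rightarrow$(1) is sound.

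The second gap is in (1)$\Rightarrow$(2): you only compare semigroups ($T_t$ preserves $\rmC_0(S)$ iff it preserves $\rmC(S^\Delta)$), but assertion (2) also asserts $\Pbf_a\in\Pcal(\D(S^\Delta))$, and this is not automatic: a path of $\Dloc(S)$ may fail to have a left limit at its explosion time, and such paths belong neither to $\D(S^\Delta)$ nor to $\Dloc(S^\Delta)$, so without this step the ``extended family on $S^\Delta$'' is not even an object to which Theorem \ref{thmDefLFF} or Remark \ref{construirelFeller}(iii) can be applied. The paper fills precisely this hole in the first line of its proof: a Feller family solves the martingale local problem for its generator, whose domain is dense (Hille--Yosida), so the quasi-continuity statement, part \ref{it4PropLMPP} of Proposition \ref{propLMPP}, gives $\Pbf_a\big(\Dloc(S)\cap\D(S^\Delta)\big)=1$. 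Alternatively, your own step (1)$\Rightarrow$(3) via Proposition \ref{propUniMgPb} already yields $\Pbf_a\in\Mcons(L^\Delta)\subset\Pcal(\D(S^\Delta))$, so reorganising the implications (for instance (1)$\Rightarrow$(3)$\Rightarrow$(2)$\Rightarrow$(1)) would also repair the argument. Once these two points are made explicit, your proof coincides with the paper's.
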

\begin{proof}
Thanks to the fourth point of Proposition \ref{propLMPP} a Feller family in $\Pcal(\Dloc(S))$ continues to be Feller also in $\Pcal(\D(S^\Delta))$, so a family $(\Pbf_a)_a\in\Pcal(\Dloc(S))^S$ is Feller if and only if the family $(\Pbf_a)_a\in\Pcal(\D(S^\Delta))^{S^\Delta}$ is Feller. Since $S^\Delta$ is compact, using the third point of Remark \ref{construirelFeller}, this is also equivalent to say that $(\Pbf_a)_{a\in S^\Delta}$ is locally Feller in $S^\Delta$. Hence the theorem is a consequence of Theorem \ref{thmDefLFF} applied on the space $S^\Delta$ and to Proposition \ref{propUniMgPb}.
\end{proof}

The following theorem provides a new relationship between the local Feller property and the Feller property. With the help of Theorem \ref{thmDefLFF} we obtain another correction of the Theorem 2.5 p. 283 from \cite{vC92} by adding the missing condition \eqref{conditionsupplementairevC}.

\begin{theorem}[Feller families - second characterisation]\label{SecondCharFeller}
Let $(\Pbf_a)_a\in\Pcal(\Dloc(S))^S$ be, the following assertions are equivalent:
\begin{enumerate}
\item $(\Pbf_a)_a$ is Feller;
\item\label{propEqFlFen4} $(\Pbf_a)_a$ is locally Feller and
\begin{equation}\label{conditionsupplementairevC}
\forall t\geq 0,~\forall K\subset S\mbox{ compact set,}\quad\Pbf_a(X_t\in K)\cv{a\to\Delta}0;
\end{equation}
\item\label{propEqFlFen5} $(\Pbf_a)_a$ is locally Feller and 
\[
\forall t\geq 0,~\forall K\subset S\mbox{ compact set,}\quad\Pbf_a\big(\tau^{S\backslash K}< t\wedge\xi\big)\cv{a\to\Delta}0.
\]
\end{enumerate}
\end{theorem}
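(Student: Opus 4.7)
The plan is to prove $(1)\Leftrightarrow(2)$ and $(2)\Leftrightarrow(3)$. For $(1)\Rightarrow(2)$ I would use that a Feller family is locally Feller (Remark~\ref{construirelFeller}(i)), and for condition~(\ref{conditionsupplementairevC}) I would dominate $\mathds{1}_K$ by some $\varphi\in\rmC_0(S)$ so that $\Pbf_a(X_t\in K)\leq T_t\varphi(a)\to 0$ as $a\to\Delta$. Conversely, for $(2)\Rightarrow(1)$, the locally Feller property already provides the Markov property and the continuity of $a\mapsto\Pbf_a$ in the local Skorokhod topology (Theorem~\ref{thmDefLFF}); for $f\in\rmC_0(S)$, continuity of $T_tf$ at $b\in S$ then follows from the portmanteau theorem applied together with the quasi-continuity identity $\Pbf_b(X_{t-}=X_t)=1$, and vanishing at $\Delta$ from the splitting $|T_tf(a)|\leq\|f\|_\infty\Pbf_a(X_t\in K)+\sup_{S\setminus K}|f|$ upon choosing $K$ large and invoking~(2).

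The implication $(3)\Rightarrow(2)$ will be immediate: for $a\notin K$ one has $\{X_t\in K\}\subset\{\tau^{S\setminus K}\leq t<\xi\}\subset\{\tau^{S\setminus K}<(t+\varepsilon)\wedge\xi\}$ for any $\varepsilon>0$, so applying~(3) with $t+\varepsilon$ in place of $t$ gives the conclusion. The real work is in $(2)\Rightarrow(3)$, where a pointwise-in-time marginal estimate must be upgraded to a path-space estimate on the hitting time of $K$.

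For $(2)\Rightarrow(3)$, I would fix a compact $K$ and $t\geq 0$, and pick compacts $K\Subset\mathrm{int}(K')$ and $K'\Subset U\Subset S$ with $U$ open. Invoking item~4 of Theorem~\ref{thmDefLFF}, I would introduce a Feller family $(\widetilde{\Pbf}_a)_a$, with Feller semigroup $\widetilde T_s$, that coincides with $(\Pbf_a)_a$ up to the exit time $\tau^U$. With $\varphi\in\rmC_0(S)$ chosen so that $\mathds{1}_K\leq\varphi\leq\mathds{1}_{K'}$, strong continuity of $\widetilde T_s$ on $\rmC_0(S)$ yields $\widetilde T_s\varphi\geq 1-\eta$ uniformly on $K$ for $s$ small. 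The event $\{\tau^U\leq s\}$ being closed in the relevant path-space, $b\mapsto\widetilde{\Pbf}_b(\tau^U\leq s)$ is upper semi-continuous by Portmanteau, and a Dini-type argument on the compact $K$ then provides $\widetilde{\Pbf}_b(\tau^U\leq s)\leq\eta$ for $b\in K$ and $s\leq\delta$. Combining these two bounds via the path coincidence on $\{s<\tau^U\}$ yields $\Pbf_b(X_s\in K')\geq 1-2\eta$ uniformly in $b\in K$ and $s\leq\delta$. Finally, using the strong Markov property at $\tau:=\tau^{S\setminus K}$ (at which $X_\tau\in K$ on $\{\tau<t\wedge\xi\}$ by quasi-continuity) produces
\[
\int_0^{t+\delta}\Pbf_a(X_s\in K')\,\d s\;\geq\;(1-2\eta)\delta\cdot\Pbf_a(\tau^{S\setminus K}<t\wedge\xi),
\]
whose left-hand side tends to $0$ as $a\to\Delta$ by~(2) and bounded convergence, giving~(3). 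The main obstacle will be the uniform-in-$b\in K$ control of both $\widetilde T_s\varphi$ and $\widetilde{\Pbf}_b(\tau^U\leq s)$, which forces the combined use of the Feller localisation from Theorem~\ref{thmDefLFF} and the semi-continuity Dini-type argument on the compact $K$.
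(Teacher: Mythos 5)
Your steps \emph{(1)$\Rightarrow$(2)} and \emph{(3)$\Rightarrow$(2)} are fine, and your \emph{(2)$\Rightarrow$(3)} is, modulo writing out the closedness of $\{\tau^U\leq s\}$ in $\D(S^\Delta)$ and the $\sigma(X^{\tau^U})$-measurability of the events you transfer, a correct and genuinely different route from the paper's: the paper gets the uniform ``no quick exit from a neighbourhood of $K$'' bound from Lemma \ref{lmUQC} and then bounds $\Pbf_a(\tau^{S\backslash K}<t\wedge\xi)$ using only the finitely many marginals $\Pbf_a(X_{kt/N}\in U)$, whereas you obtain the uniform bound $\inf_{b\in K,\,s\leq\delta}\Pbf_b(X_s\in K')\geq 1-2\eta$ via the Feller localisation of Theorem \ref{thmDefLFF} plus a Dini argument, and then integrate the marginal estimate over $s\in[0,t+\delta]$.

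The gap is in your \emph{(2)$\Rightarrow$(1)}, i.e.\ exactly in the direction ``locally Feller $+$ \eqref{conditionsupplementairevC} $\Rightarrow$ Feller'', which is the crux of the theorem. You claim continuity of $T_tf$ at $b\in S$ from portmanteau together with $\Pbf_b(X_{t-}=X_t)=1$. But the convergence $\Pbf_{a_n}\to\Pbf_b$ provided by the locally Feller property is weak convergence for the \emph{local} Skorokhod topology, and $x\mapsto f(x_t)$ is not $\Pbf_b$-a.s.\ continuous for that topology: it is discontinuous at every path which leaves all compact subsets of $S$ (or is killed) before time $t$, since the local topology imposes no constraint on an approximating sequence after the limit path has left compacts. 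For instance, if $x$ equals $a\in S$ on $[0,1)$ and $\Delta$ afterwards, the paths $y^k$ which follow $x$, jump at time $1$ to points $z_k\to\Delta$ and jump back at time $3/2$ to a point where $f=1$, converge to $x$ in $\Dloc(S)$ while $f(y^k_t)\not\to f(x_t)=0$ for $t=2$. Quasi-continuity only rules out a jump at the fixed time $t$; it does not rule out $\Pbf_b(\xi\leq t)>0$, and locally Feller families do explode -- this is precisely the phenomenon behind van Casteren's error that the theorem is correcting (his argument works only for compact $S$, where there is no explosion). The paper's proof of this direction uses hypothesis (3) in an essential way: it compares $\Ebf_a[f(X_t)]$ with $\Ebf_a[f((g\mycdot X)_t)]$ for a time-changed Feller family with $g=1$ on $U$, splits at $\tau^U$ by the strong Markov property, and bounds the probability of returning to $K$ after leaving $U$ by $\sup_{a\notin U}\Pbf_a(\tau^{S\backslash K}<t\wedge\xi)\leq\eps$, yielding a bound uniform in $a$ and hence $T_tf\in\rmC_0(S)$. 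Your splitting handles the vanishing at $\Delta$ but not the interior continuity; since you do prove (2)$\Rightarrow$(3), the natural repair is to replace your (2)$\Rightarrow$(1) by an argument of this (3)$\Rightarrow$(1) type. As written, the cycle never establishes the Feller property.
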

\begin{proof}
\emph{\ref{propEqFlFen1}$\Rightarrow$\ref{propEqFlFen4}.}
Take a compact $K\subset S$ and $t\geq 0$. There exists $f\in\rmC_0(S)$ such that $f\geq \1_K$. Since the family is Feller,
\[
\Pbf_a(X_t\in K)\leq\Ebf_a[f(X_t)]\cv{a\to\Delta}0.
\]

\noindent
\emph{\ref{propEqFlFen4}$\Rightarrow$\ref{propEqFlFen5}.}
Take an open subset $U\Subset S$ such that $K\subset U$ and define
\[
\tau:=\inf\Big\{s\geq 0\,\Big|\,\{(X_0,X_u)\}_{0\leq u\leq s}\not\Subset U^2\cup(S\backslash K)^2 \Big\}.
\]
By the third sentence of Theorem \ref{thmDefLFF}, we can applying Lemma \ref{lmUQC} to $\Kc:=K$, $\Uc:=U^2\cup(S\backslash K)^2$, $\tau_1:=0$ and $\tau_2:=\frac{t}{N}$, we get the existence of $N\in\N$ such that
\[
\sup_{b\in K}\Pbf_b\Big(\tau\leq\frac{t}{N}\Big)<1.
\]
By Theorem \ref{thmDefLFF}, $\Pbf_a$ is quasi-continuous for any $a\in S$, so $\Pbf_a(X_{\tau^{S\backslash K}}\in K\cup\{\Delta\})=1$. Denoting $\lceil r\rceil$ the smallest integer larger or equal than the real number $r$, we have
\begin{multline*}
\Pbf_a\Big(\exists k\in \N,~k\leq N,~X_{ktN^{-1}}\in U\Big)
\geq\Pbf_a\Big(\tau^{S\backslash K}< t\wedge\xi,~X_{tN^{-1}\lceil t^{-1}N\tau^{S\backslash K}\rceil}\in U\Big)\\
=\Ebf_a\Big[\mathds{1}_{\{\tau^{S\backslash K}< t\wedge\xi\}}\Ebf_{X_{\tau^{S\backslash K}}}\left[X_s\in U\right]_{|s=tN^{-1}\lceil t^{-1}N\tau^{S\backslash K}\rceil-\tau^{S\backslash K}}\Big]\\
\geq\Pbf_a\Big(\tau^{S\backslash K}< t\wedge\xi\Big)\Big[1-\sup_{b\in K}\Pbf\big(\tau\leq tN^{-1}\big)\Big],
\end{multline*}
so
\[
\Pbf_a\big(\tau^{S\backslash K}< t\wedge\xi\big)\leq\frac{\sum_{k=0}^N\Pbf_a\big(X_{ktN^{-1}}\in U\big)}{1-\sup_{b\in K}\Pbf_b\big(\tau\leq tN^{-1}\big)}\cv{}0,\quad\text{as }a\to\Delta.
\]
\emph{\ref{propEqFlFen5}$\Rightarrow$\ref{propEqFlFen1}.}
Consider  $f\in\rmC_0(S)$ and let $t\geq 0$ and $\eps>0$ be.  There exists a compact subset $K\subset S$ such that
$\|f\|_{K^c} \leq \eps$,
and an open subset $U\Subset S$ such that $K\subset U$ and
\[
\sup_{a\not\in U}\Pbf_a(\tau^{S\backslash K}< t\wedge\xi)\leq \eps.
\]
With the aim of the second assertion of Theorem \ref{thmDefLFF} and Proposition \ref{propTCFel}, there exists $g\in\rmC(S,(0,1])$ such that $g(a) = 1$, for $a\in U$, and $(g\mycdot\Pbf_a)_a$ is Feller. Then for any $a\in S$
\begin{align*}
\big|\Ebf_a[f(X_t)]-\Ebf_a[f((g\mycdot X)_t)]\big|
& \leq \Ebf_a\big[\left|f(X_t)-f((g\mycdot X)_t)\right|\mathds{1}_{\{\tau^U< t\}}\big] \\
& \leq \Ebf_a\big[\left|f(X_t)\right|\mathds{1}_{\{\tau^U< t\}}\big]
 +  \Ebf_a\big[\left|f((g\mycdot X)_t)\right|\mathds{1}_{\{\tau^U< t\}}\big].
\end{align*}
By Theorem \ref{thmDefLFF}, $\Pbf_a$ is quasi-continuous, so $\Pbf_a(X_{\tau^U}\not\in U)=1$, we have
\begin{align*}
\Ebf_a\Big[\big|f(X_t)\big|\mathds{1}_{\{\tau^U< t\}}\Big]
& = \Ebf_a\Big[\mathds{1}_{\{\tau^U< t\}}\Ebf_{X_{\tau^U}}\big[|f(X_s)|\big]_{|s=t-\tau^U}\Big] \\
& = \Ebf_a\Big[\mathds{1}_{\{\tau^U< t\}}\Ebf_{X_{\tau^U}}\big[|f(X_s)|\1_{\{\tau^{S\backslash K}< t\wedge\xi\}}\big]_{|s=t-\tau^U}\Big] \\
& \quad + \Ebf_a\Big[\mathds{1}_{\{\tau^U< t\}}\Ebf_{X_{\tau^U}}\big[|f(X_s)|\mathds{1}_{\{\tau^{S\backslash K}\geq t\wedge\xi\}}\big]_{|s=t-\tau^U}\Big] \\
& \leq \|f\|\sup_{a\not\in U}\Pbf_a(\tau^{S\backslash K}< t\wedge\xi) + \|f\|_{K^c}\leq (\|f\|+1)\eps,
\end{align*}
and
\begin{align*}
\Ebf_a\Big[\left|f(g\mycdot X_t)\right|\mathds{1}_{\{\tau^U< t\}}\Big]
& = \Ebf_a\Big[\mathds{1}_{\{\tau^U< t\}}\Ebf_{X_{\tau^U}}\big[|f(g\mycdot X_s)|\big]_{|s=t-\tau^U}\Big] \\
& = \Ebf_a\Big[\mathds{1}_{\{\tau^U< t\}}\Ebf_{X_{\tau^U}}\big[|f(g\mycdot X_s)|\1_{\{\tau^{S\backslash K}< t\wedge\xi\}}\big]_{|s=t-\tau^U}\Big] \\
& \quad + \Ebf_a\Big[\mathds{1}_{\{\tau^U< t\}}\Ebf_{X_{\tau^U}}\big[|f(g\mycdot X_s)|\1_{\{\tau^{S\backslash K}\geq t\wedge\xi\}}\big]_{|s=t-\tau^U}\Big] \\
& \leq \|f\|\sup_{a\not\in U}\Pbf_a(\tau^{S\backslash K}< t\wedge\xi) + \|f\|_{K^c}\leq (\|f\|+1)\eps.
\end{align*}
Hence
\[
\big|\Ebf_a[f(X_t)]-\Ebf_a[f((g\mycdot X)_t)]\big|\leq 2(\|f\|+1)\eps,
\]
so, since $a\mapsto \Ebf_a[f((g\mycdot X)_t)]$ is in $\rmC_0(S)$, letting $\eps\to 0$ we deduce that $a\mapsto \Ebf_a[f(X_t)]$ is in $\rmC_0(S)$, hence $(\Pbf_a)_a$ is Feller.
\end{proof}

\subsection{Generator description and convergence}

In this subsection we analyse the generator of a locally Feller family: 

\begin{definition}
Let $(\Pbf_a)_a\in\Pcal(\Dloc(S))^S$ be a locally Feller family. The $\rmC_0\times\rmC$-generator $L$ of $(\Pbf_a)_a\in\Pcal(\Dloc(S))^S$ is the set of functions $(f,g)\in\rmC_0(S)\times\rmC(S)$ such that for any $a\in S$ and any 
open subset $U\Subset S$
\[
f(X_{t\wedge\tau^U})-\int_0^{t\wedge\tau^U}g(X_s)\d s\text{ is a }\Pbf_a\text{-martingale}.
\]
\end{definition}

\begin{theorem}[Generator's description]\label{thmGenDesc}
Let $(\Pbf_a)_a\in\Pcal(\Dloc(S))^S$ be a locally Feller family and $L$ its $\rmC_0\times\rmC$-generator. Then $\rmD(L)$ is dense, $L$ is an 
univariate closed sub-vector space,
\[
\Mc(L)=\{\Pbf_\mu\}_{\mu\in \Pcal(S^\Delta)},
\]
$L$ satisfies the positive maximum principle and does not have a strict linear extension satisfying the positive maximum principle.
Moreover for any $(f,g)\in\rmC_0(S)\times\rmC(S)$ we have equivalence between:
\begin{enumerate}
\item\label{eq1thmGen} $(f,g)\in L$;
\item\label{eq2thmGen} for all $a\in S$, there exists an open set $U\subset S$ containing $a$ such that
\[
\lim_{t\to 0}\frac{1}{t}\Big(\Ebf_a\left[f(X_{t\wedge\tau^U})\right]-f(a)\Big)=g(a);
\]
\item\label{eq3thmGen} for all open subset $U\Subset S$ and $a\in U$
\[
\lim_{t\to 0}\frac{1}{t}\Big(\Ebf_a\left[f(X_{t\wedge\tau^U})\right]-f(a)\Big)=g(a).
\]
\end{enumerate}
\end{theorem}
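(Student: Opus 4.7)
My plan rests on the existence of a well-posed martingale core provided by Theorem \ref{thmDefLFF}(3): pick $L_0 \subset \rmC_0(S)\times\rmC(S)$ with $\rmD(L_0)$ dense in $\rmC_0(S)$ such that the unique element of $\Mc(L_0)$ starting at $a$ is $\Pbf_a$. The first key observation is $L_0 \subset L$: for $(f,g) \in L_0$ and any $a$, the fact that $\Pbf_a \in \Mc(L_0)$ says exactly that $f(X_{t\wedge\tau^U}) - \int_0^{t\wedge\tau^U} g(X_s)\d s$ is a $\Pbf_a$-martingale for every open $U \Subset S$, which is the definition of $(f,g) \in L$. This immediately yields density of $\rmD(L)$; linearity of $L$ is direct; closedness follows because if $(f_n,g_n) \to (f,g)$ in $\rmC_0 \times \rmC$, then on each compactly embedded $U$ the corresponding martingales converge in $L^\infty(\Pbf_a)$, and the martingale property passes to the limit. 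The inclusion $\{\Pbf_\mu\}_\mu \subset \Mc(L)$ is immediate from the definition by integrating; conversely, $\Mc(L) \subset \Mc(L_0)$, and Proposition \ref{propEUF} promotes the pointwise uniqueness to uniqueness under any initial law, giving $\Mc(L) = \{\Pbf_\mu\}_\mu$.

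Next I prove (i) $\Rightarrow$ (iii). Writing the martingale identity at time $0$ and $t$ yields
\[
\frac{1}{t}\bigl(\Ebf_a[f(X_{t\wedge\tau^U})] - f(a)\bigr) = \Ebf_a\left[\frac{1}{t}\int_0^{t\wedge\tau^U} g(X_s)\d s\right].
\]
For $a \in U$, $\Pbf_a$-a.s.\ we have $\tau^U > 0$ by right-continuity, so $\frac{1}{t}\int_0^{t\wedge\tau^U} g(X_s)\d s \to g(a)$ pointwise; since $|g(X_s)| \leq \sup_{\bar U}|g| < \infty$ on $s < \tau^U$, bounded convergence concludes. The implication (iii) $\Rightarrow$ (ii) is trivial. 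PMP for $L$ is then immediate: if $f(a_0) = \sup f \geq 0$, choose any open $U \ni a_0$ with $U \Subset S$; since $f(\Delta) = 0 \leq f(a_0)$ we have $f(X_{t\wedge\tau^U}) \leq f(a_0)$, so the ratio is non-positive and (iii) gives $g(a_0) \leq 0$. Univariate-ness of $L$ then follows from Remark \ref{rqunivariate}.

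For the maximality property, let $\widetilde{L} \supset L$ be a linear subspace of $\rmC_0(S)\times\rmC(S)$ satisfying PMP. Since $\rmD(\widetilde{L}) \supset \rmD(L_0)$ is dense, Theorem \ref{thmExMP} gives a solution $\Qbf_a \in \Mc(\widetilde{L})$ with $\Qbf_a(X_0 = a) = 1$ for each $a \in S$. But $\Mc(\widetilde{L}) \subset \Mc(L_0)$, so by the uniqueness part of Theorem \ref{thmDefLFF}(3) we have $\Qbf_a = \Pbf_a$. Hence $\Pbf_a \in \Mc(\widetilde{L})$ for every $a$, meaning that for each $(f,g) \in \widetilde{L}$, the process $f(X_{t\wedge\tau^U}) - \int_0^{t\wedge\tau^U} g(X_s)\d s$ is a $\Pbf_a$-martingale for every $a$ and every open $U \Subset S$. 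This is the definition of $(f,g) \in L$, so $\widetilde{L} = L$.

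Finally I use maximality to prove (ii) $\Rightarrow$ (i), closing the equivalence. Given $(f,g)$ satisfying (ii), define $\widetilde{L} := L + \R(f,g)$. For a generic element $(h+\lambda f, k+\lambda g)$ with $(h,k) \in L$, $\lambda \in \R$, suppose it attains $(h+\lambda f)(a_0) = \sup_{a \in S}(h+\lambda f)(a) \geq 0$. Choose the open $U \ni a_0$ provided by (ii) applied to $(f,g)$ at $a_0$. The already-proven implication (i) $\Rightarrow$ (iii), applied to $(h,k) \in L$ on the same $U$, combined with (ii) for $(f,g)$, gives
\[
\lim_{t\to 0}\frac{1}{t}\bigl(\Ebf_{a_0}[(h+\lambda f)(X_{t\wedge\tau^U})] - (h+\lambda f)(a_0)\bigr) = (k+\lambda g)(a_0),
\]
and the left-hand side is $\leq 0$ because $(h+\lambda f)(X_{t\wedge\tau^U})\leq (h+\lambda f)(a_0)$ (with $h(\Delta)=f(\Delta)=0$). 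Thus $\widetilde{L}$ satisfies PMP, and by the maximality proved in the previous paragraph, $\widetilde{L} = L$, so $(f,g) \in L$. The main obstacle is sequencing: PMP, maximality and the equivalence of the three local criteria must be arranged so that each step invokes only items already established, with maximality acting as the bridge that converts the purely pointwise condition (ii) into the full family martingale property (i).
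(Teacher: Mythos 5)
Your overall route is exactly the paper's: you get $\Mc(L)=\{\Pbf_\mu\}_{\mu}$ and the density of $\rmD(L)$ from the third assertion of Theorem \ref{thmDefLFF} together with Proposition \ref{propEUF}, prove closedness directly (the paper cites part \ref{it2PropLMPP} of Proposition \ref{propLMPP}; same content), obtain \ref{eq1thmGen}$\Rightarrow$\ref{eq3thmGen}$\Rightarrow$\ref{eq2thmGen}, the positive maximum principle and univariateness from the martingale difference-quotient computation (Remark \ref{rkBLMP}, Remark \ref{rqunivariate}), prove maximality via Theorem \ref{thmExMP} and well-posedness, and finally try to deduce \ref{eq2thmGen}$\Rightarrow$\ref{eq1thmGen} by exhibiting a linear extension of $L$ satisfying the positive maximum principle and invoking maximality. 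Everything up to the last step is sound and matches the paper.

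The last step, however, contains a concrete gap. Statement \ref{eq2thmGen} only furnishes an open set $U\subset S$ containing $a_0$, not a relatively compact one, whereas the implication \ref{eq1thmGen}$\Rightarrow$\ref{eq3thmGen} that you apply to $(h,k)\in L$ ``on the same $U$'' has been established only for $U\Subset S$ (the definition of the generator involves $\tau^U$ only for $U\Subset S$, and $k$ need not even be bounded on a non relatively compact $U$). Nor can you use two different neighbourhoods, one for $h$ and one for $f$: the sign argument requires bounding $\Ebf_{a_0}\big[(h+\lambda f)(X_{t\wedge\tau^U})\big]$ by $\sup (h+\lambda f)$, and this breaks down if $h$ and $\lambda f$ are evaluated at different stopped positions, since bounding each term by its own supremum loses the maximum-principle information. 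What is missing is a lemma saying the limit in \ref{eq2thmGen} is insensitive to the neighbourhood, e.g.\ that it persists when $U$ is replaced by $U\cap V$ with $V\Subset S$ open containing $a_0$; this is not automatic, because the error is controlled by $\frac{1}{t}\Pbf_{a_0}(\tau^{U\cap V}\leq t)$, which is of order one for processes with jumps, and a proof needs the strong Markov property at the exit time plus a uniform small-time continuity estimate in the spirit of Lemma \ref{lmUQC}. To be fair, the paper's own proof is terse at exactly this point: it takes as extension the whole set of pairs satisfying \ref{eq2thmGen} and asserts without argument that this set is \emph{linear}, which is the same common-neighbourhood difficulty in another guise; your choice $\widetilde{L}=L+\R(f,g)$ makes linearity free but relocates the difficulty into the verification of the positive maximum principle. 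So: same strategy as the paper, with one unjustified invocation in \ref{eq2thmGen}$\Rightarrow$\ref{eq1thmGen} that needs either a relatively compact $U$ in \ref{eq2thmGen} or a neighbourhood-independence argument to close.
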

\begin{proof}
Thanks to the third assertion of Theorem \ref{thmDefLFF} and Proposition \ref{propEUF}, we have $\Mc(L)=\{\Pbf_\nu\}_{\nu\in \Pcal(S^\Delta)}$ and $\rmD(L)$ is dense. By  the point \ref{it2PropLMPP} of Proposition \ref{propLMPP}, $L$ is a closed sub-vector space.
The fourth part of Remark \ref{rkBLMP} allows us to conclude that: $L$ is univariate, $L$ satisfies the positive maximum principle, and that \ref{eq1thmGen}$\Rightarrow$\ref{eq3thmGen}. It is strightforward that \ref{eq3thmGen}$\Rightarrow$\ref{eq2thmGen}. Thanks to Theorem \ref{thmExMP}, $L$ does not have strict linear extension satisfying the positive maximum principle. Finally the set of $(f,g)$ satisfying the statement \ref{eq2thmGen} is a linear extension of $L$ satisfying the positive maximum principle, so by the previous assertion \ref{eq2thmGen}$\Rightarrow$\ref{eq1thmGen}.
\end{proof}

\begin{remark}\label{rkTCGen}
One can ask, as in Remark \ref{construirelFeller}, how can we obtain the generator of a 
locally Feller family? A similar statement of first one in the cited remark is 
Proposition \ref{propGenFF} below. The second one is straightforward: if  $g\in\rmC(S,\R_+^*)$ and if $L$ is the $\rmC_0\times\rmC$-generator of $(\Pbf_a)_a$, then $gL$ is the $\rmC_0\times\rmC$-generator of $(g\mycdot\Pbf_a)_a$, as we can see by using \ref{it1PropLMPP} from Proposition \ref{propLMPP}.
\end{remark}

\begin{proposition}
\label{propGenFF}
Let $(\Pbf_a)_a\in\Pcal(\Dloc(S))^S$ be a Feller family, $L_0$ its $\rmC_0\times\rmC_0$-generator and $L$ its $\rmC_0\times\rmC$-generator. Then taking the closure in $\rmC_0(S)\times\rmC(S)$
\[
L_0=L\cap\rmC_0(S)\times\rmC_0(S),\quad\text{and}\quad L=\overline{L_0}.
\]
\end{proposition}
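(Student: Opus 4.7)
The plan is to split the equality into two statements, $L_0 = L \cap (\rmC_0(S) \times \rmC_0(S))$ and $L = \overline{L_0}$, and tackle them in that order, since the second will rely on the first (applied to a time-changed Feller family).

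For the first equality I prove two inclusions. The inclusion $L_0 \subset L$ follows from Proposition~\ref{propUniMgPb}, which gives $\Pbf_a \in \Mcons(L_0^\Delta)$ and hence that $f(X_t) - \int_0^t g(X_s)\,\d s$ is a $\Pbf_a$-martingale on $\D(S^\Delta)$; optional sampling (Theorem~\ref{thmOptSam}) at the bounded stopping time $t \wedge \tau^U$ then yields $(f,g) \in L$. For the reverse inclusion $L \cap (\rmC_0 \times \rmC_0) \subset L_0$, I take $(f,g) \in L$ with $g \in \rmC_0 \subset \rmC_b$, so that Remark~\ref{rkBLMP}(1) upgrades the local martingale property into the global one: $T_t f(a) - f(a) = \Ebf_a\bigl[\int_0^{t\wedge\xi} g(X_s)\,\d s\bigr]$. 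Boundedness of $g$, right-continuity of $X$, and $\xi > 0$ $\Pbf_a$-a.s.\ for $a\in S$ then let me divide by $t$ and apply dominated convergence to conclude $\frac{T_tf(a)-f(a)}{t}\to g(a)$, i.e.\ $(f,g)\in L_0$.

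Next, $\overline{L_0} \subset L$ is immediate since $L_0 \subset L$ and $L$ is closed in $\rmC_0(S) \times \rmC(S)$ by Theorem~\ref{thmGenDesc}. The core of the proof is the opposite inclusion $L \subset \overline{L_0}$, which I obtain by a time-change renormalisation. Given $(f,g) \in L$, the obstruction is that $g$ need not vanish at infinity, so I choose $h \in \rmC_b(S, \R_+^*)$ with $hg \in \rmC_0(S)$; an explicit choice is $h := \varphi/(1+|g|)$ with $\varphi \in \rmC_0(S)$, $0 < \varphi \leq 1$. By Proposition~\ref{propTCFel}, $(h \mycdot \Pbf_a)_a$ is Feller with $\rmC_0 \times \rmC_0$-generator $\overline{h L_0}$, and by Remark~\ref{rkTCGen} its $\rmC_0 \times \rmC$-generator is exactly $hL$. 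Since $(f, hg) \in hL \cap (\rmC_0 \times \rmC_0)$, applying the first equality (already established) to the Feller family $(h \mycdot \Pbf_a)_a$ yields $(f, hg) \in \overline{hL_0}$. Unpacking the closure, I obtain $(f_n, g_n) \in L_0$ with $f_n \to f$ and $h g_n \to hg$ uniformly. Strict positivity of $h$ then gives $\inf_K h > 0$ on any compact $K \subset S$, so $|g_n - g| \leq \|h g_n - h g\|_\infty / \inf_K h$ converges uniformly on $K$ to $0$. Hence $(f_n, g_n) \to (f,g)$ in $\rmC_0(S) \times \rmC(S)$ and $(f,g) \in \overline{L_0}$.

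The main obstacle is the last step: the insight is that the time change reduces a statement about the $\rmC_0 \times \rmC$-generator to one about the $\rmC_0 \times \rmC_0$-generator of a related Feller family, where one can re-use the easy inclusion. Choosing $h$ judiciously — bounded, strictly positive everywhere, and with $hg$ vanishing at infinity — is exactly what makes this reduction, and the final reconversion from uniform convergence of $hg_n$ to local uniform convergence of $g_n$, succeed.
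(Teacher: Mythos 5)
Your proof is correct, and for the substantive half $L\subset\overline{L_0}$ it is exactly the paper's argument: pick a bounded $h\in\rmC(S,\R_+^*)$ with $hg\in\rmC_0(S)$, use Proposition \ref{propTCFel} and Remark \ref{rkTCGen} to identify the $\rmC_0\times\rmC_0$- and $\rmC_0\times\rmC$-generators of $(h\mycdot\Pbf_a)_a$ as $\overline{hL_0}$ and $hL$, apply the first equality to that family, and then convert uniform convergence of $hg_n$ into local uniform convergence of $g_n$ via $\inf_K h>0$ (a step the paper leaves implicit and you spell out). Where you genuinely deviate is the inclusion $L\cap\rmC_0(S)\times\rmC_0(S)\subset L_0$: the paper gets it abstractly, observing that $L\cap\rmC_0(S)\times\rmC_0(S)$ is an extension of $L_0$ satisfying the positive maximum principle and invoking the maximality of Feller generators (Kallenberg, Lemma 19.12, a consequence of Hille--Yosida), whereas you compute directly, via Remark \ref{rkBLMP}(1), $\xi>0$ $\Pbf_a$-a.s., right continuity and dominated convergence, that $(T_tf(a)-f(a))/t\to g(a)$ for every $a\in S$. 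Your route is more elementary and self-contained, and it lands precisely in the paper's stated (pointwise) definition of the $\rmC_0\times\rmC_0$-generator; the one caveat is that if $L_0$ is read as the strong (Hille--Yosida) generator, as the paper implicitly does when it later uses Lumer's theorem and Trotter--Kato, your argument additionally needs the classical fact that bounded pointwise convergence of $(T_tf-f)/t$ to a $\rmC_0$ limit already puts $f$ in the strong generator of a Feller semigroup --- which is essentially what the paper's maximality citation supplies. So the paper's version buys the strong-generator statement for free, while yours avoids external semigroup machinery at that step; both are valid.
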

\begin{proof}
Firstly, we have $L_0\subset L\cap\rmC_0(S)\times\rmC_0(S)$ by Proposition \ref{propUniMgPb}. Hence $L\cap\rmC_0(S)\times\rmC_0(S) $ is an extension of $L_0$ satisfying the positive maximum principle, so by a maximality result (a consequence 
of Hille-Yoshida's, see for instance Lemma  19.12, p. 377 in \cite{Ka02}), $L_0= L\cap\rmC_0(S)\times\rmC_0(S)$.

Secondly, take $(f,g)\in L$. Let $h\in\rmC(S,\R_+^*)$ be a bounded function such that $hg\in\rmC_0(S)$. Thanks to Proposition \ref{propTCFel} the $\rmC_0\times\rmC_0$-generator of $(h\mycdot\Pbf_a)_a$ is $\overline{hL_0}
^{\rmC_0(S)\times\rmC_0(S)}$. Moreover the $\rmC_0\times\rmC$-generator of $(h\mycdot\Pbf_a)_a$ is $hL$. Hence applying the first step to the family $(h\mycdot\Pbf_a)_a$ we deduce that
\[
\overline{hL_0}^{\rmC_0(S)\times\rmC_0(S)}=(hL)\cap\rmC_0(S)\times\rmC_0(S),
\]
so $(f,hg)\in\overline{hL_0}^{\rmC_0(S)\times\rmC_0(S)}$ and $(f,g)\in\overline{L_0}^{\rmC_0(S)\times\rmC(S)}$.
\end{proof}

\begin{theorem}[Convergence of locally Feller family]\label{thmCvgLocFel}
For $n\in\N\cup\{\infty\}$, let $(\Pbf^n_a)_a\in\Pcal(\Dloc(S))^S$ be a locally Feller family and let $L_n$ be a subset of $\rmC_0(S)\times\rmC(S)$. Suppose that for any $n\in\N$, $\overline{L_n}$ is the generator of $(\Pbf^n_a)_a$, suppose also that $\rmD(L_\infty)$ is dense in $\rmC_0(S)$ and
\[
\Mc(L_\infty)=\{\Pbf^\infty_\mu\}_{\mu\in \Pcal(S^\Delta)}.
\]
Then we have equivalence between:
\begin{enumerate}
\item\label{it1thmCvgLocFel} the mapping
\[\begin{array}{ccc}
\N\cup\{\infty\}\times\Pcal(S^\Delta)&\to&\Pcal\left(\Dloc(S)\right)\\
(n,\mu)&\mapsto&\Pbf^n_\mu
\end{array}\]
is weakly continuous for the local Skorokhod topology;
\item\label{it2thmCvgLocFel} for any $a_n,a\in S$ such that $a_n\to a$, 
$\Pbf^n_{a_n}$ converges weakly for the local Skorokhod topology to $\Pbf^\infty_a$, 
as $n\to\infty$; 
\item\label{it3thmCvgLocFel} for any $(f,g)\in L_\infty$, there exist $(f_n,g_n)\in L_n$ such that $f_n\cv[\rmC_0]{n\to\infty} f$, $g_n\cv[\rmC]{n\to\infty}g$.
\end{enumerate}
\end{theorem}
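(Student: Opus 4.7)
My plan is to establish the equivalence via the cycle $(1) \Rightarrow (2)$, $(3) \Rightarrow (2)$, $(2) \Rightarrow (1)$, $(2) \Rightarrow (3)$; the last is the substantial implication, and $(2) \Rightarrow (1)$ also requires a common time change. The implication $(1) \Rightarrow (2)$ is immediate by specialising to Dirac measures $\mu_n := \delta_{a_n}$, $\mu := \delta_a$. For $(3) \Rightarrow (2)$, observe that condition (3) is exactly the convergence hypothesis \eqref{cvgenerateur} of Proposition \ref{propUSCLMP}. Fixing $a_n \to a$ in $S$, the martingale characterisation of locally Feller families (third assertion of Theorem \ref{thmDefLFF}) gives $\Pbf^n_{a_n} \in \Mc(L_n)$, so the tightness part \ref{it2PropUSCLMP} of Proposition \ref{propUSCLMP} yields tightness of $\{\Pbf^n_{a_n}\}_n$; the continuity part \ref{it1PropUSCLMP} forces any subsequential weak limit to lie in $\Mc(L_\infty) = \{\Pbf^\infty_\mu\}$, and Skorokhod-continuity of $x \mapsto x_0$ identifies it as $\Pbf^\infty_a$.

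For $(2) \Rightarrow (1)$, I apply Proposition \ref{propEqLocGlo} to the jointly weakly continuous map $(n, a) \in (\N \cup \{\infty\}) \times S \mapsto \Pbf^n_a$ (joint continuity follows from (2) combined with the local Feller property of each $(\Pbf^n_a)$) with $U = S$, obtaining a common $g \in \rmC(S, \R_+^*)$ such that $(n, a) \mapsto g\mycdot\Pbf^n_a$ is weakly continuous for the global Skorokhod topology on $\D(S^\Delta)$. Extending to $\Delta$ by $\Pbf^n_\Delta := \delta_{\Delta^{\R_+}}$, for any bounded continuous $F$ on $\D(S^\Delta)$ the function $\psi^n(a) := \int F\, \d(g\mycdot\Pbf^n_a)$ is jointly continuous on the compact space $(\N \cup \{\infty\}) \times S^\Delta$, and a standard Arzel\`a-Ascoli argument on this compact domain upgrades this to uniform convergence $\psi^{n_k} \to \psi^\infty$ on $S^\Delta$ as $n_k \to \infty$. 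The decomposition
\[
\int \psi^{n_k}\, \d\mu_k - \int \psi^\infty\, \d\mu
= \int (\psi^{n_k} - \psi^\infty)\,\d\mu_k + \int \psi^\infty\,\d(\mu_k - \mu)
\]
then yields joint continuity of $(n, \mu) \mapsto g\mycdot\Pbf^n_\mu$ for global Skorokhod, and reversing the time change by $1/g$ delivers the required local Skorokhod continuity of $(n, \mu) \mapsto \Pbf^n_\mu$.

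The main step is $(2) \Rightarrow (3)$. Fix $(f, h) \in L_\infty$, and refine the common $g$ from the previous paragraph by multiplying by a suitable $\beta \in \rmC(S, (0, 1])$ so that $\alpha := g \beta$ additionally satisfies $\alpha h \in \rmC_0(S)$; by Proposition \ref{propTCFel} each $(\alpha \mycdot \Pbf^n_a)$ remains Feller with the joint weak continuity preserved. This Feller convergence upgrades to strong semigroup convergence $\widetilde T^n_t \to \widetilde T^\infty_t$ on $\rmC_0(S)$ and hence to strong resolvent convergence $\widetilde R^n_\lambda \to \widetilde R^\infty_\lambda$ for $\lambda > 0$. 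Since $\Pbf^\infty_a \in \Mc(L_\infty)$, $L_\infty$ is contained in the $\rmC_0 \times \rmC$-generator $L^\infty_\star$ of $(\Pbf^\infty_a)$; by Remark \ref{rkTCGen} the $\rmC_0 \times \rmC$-generator of $(\alpha\mycdot\Pbf^\infty_a)$ is $\alpha L^\infty_\star$, and since $\alpha h \in \rmC_0(S)$, Proposition \ref{propGenFF} places $(f, \alpha h)$ in the Feller $\rmC_0 \times \rmC_0$-generator $\widetilde L^\infty_0$ of $(\alpha\mycdot\Pbf^\infty_a)$. Thus $f = \widetilde R^\infty_\lambda(\lambda f - \alpha h)$; defining $\hat f_n := \widetilde R^n_\lambda(\lambda f - \alpha h)$ places $(\hat f_n, \lambda \hat f_n - \lambda f + \alpha h)$ in the Feller $\rmC_0 \times \rmC_0$-generator of $(\alpha \mycdot \Pbf^n_a)$, with $\hat f_n \to f$ and $\lambda \hat f_n - \lambda f + \alpha h \to \alpha h$ in $\rmC_0(S)$ by resolvent convergence. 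Undoing the $\alpha$-time change, $(\hat f_n, (\lambda \hat f_n - \lambda f + \alpha h)/\alpha)$ belongs to the $\rmC_0 \times \rmC$-generator $\overline{L_n}$ of $(\Pbf^n_a)$, with second coordinate converging to $h$ in $\rmC(S)$ (uniformly on compact subsets of $S$, since $\alpha > 0$ is bounded below on each compact set). A final diagonal extraction against the density of $L_n$ inside $\overline{L_n}$ produces the required $(f_n, h_n) \in L_n$. The principal obstacle is engineering the common time change $\alpha$ to simultaneously make all $(\alpha\mycdot\Pbf^n_a)$ Feller and guarantee $\alpha h \in \rmC_0(S)$, and then carefully shuttling between the Feller $\rmC_0 \times \rmC_0$-generators and the original $\rmC_0 \times \rmC$-generators on either side of the time change; this is precisely where Remark \ref{rkTCGen}, Proposition \ref{propTCFel}, and Proposition \ref{propGenFF} all come together.
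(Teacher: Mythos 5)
Your overall architecture for the main implication \ref{it2thmCvgLocFel}$\Rightarrow$\ref{it3thmCvgLocFel} is the same as the paper's (time change via Proposition \ref{propEqLocGlo} to reduce to Feller families, a Trotter--Kato-type argument — you phrase it through resolvents, the paper through Kallenberg's theorem — and transfer back through Remark \ref{rkTCGen} and Proposition \ref{propGenFF}), but there is a genuine gap at the point where you ``refine'' the time change. You replace $g$ by $\alpha=g\beta$ so that $\alpha h\in\rmC_0(S)$ and assert that the joint weak continuity of $(n,a)\mapsto \alpha\mycdot\Pbf^n_a$ for the global topology (hence the strong semigroup and resolvent convergence you then invoke) is ``preserved'' by Proposition \ref{propTCFel}. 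That proposition only says that each \emph{individual} family $\beta\mycdot(g\mycdot\Pbf^n_a)_a$ is again Feller; it gives no information about convergence across $n$. Weak continuity is not stable under post-composition with the time-change map $x\mapsto\beta\mycdot x$, which is not continuous in general — taming exactly this is the whole point of Proposition \ref{propEqLocGlo}, and that proposition hands you \emph{some} $g$, not one you may shrink at will so as to force $\alpha h\in\rmC_0(S)$. Without the joint continuity for $\alpha$, the convergence $\widetilde R^n_\lambda\to\widetilde R^\infty_\lambda$ is unproved and the construction of $\hat f_n$ collapses. The paper's proof is engineered precisely to avoid this: it never needs $hg\in\rmC_0(S)$ for the given pair, because Proposition \ref{propGenFF} gives $\overline{hL_\infty\cap\rmC_0(S)\times\rmC_0(S)}^{\rmC_0(S)\times\rmC(S)}=hL_\infty$, so one first approximates $(f,hg)$ from inside $hL_\infty\cap\rmC_0(S)\times\rmC_0(S)$, applies Trotter--Kato there with the single $h$ supplied by Proposition \ref{propEqLocGlo}, and only then divides by $h$.

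The same unsupported continuity-of-time-change claim reappears in your step \ref{it2thmCvgLocFel}$\Rightarrow$\ref{it1thmCvgLocFel}: ``reversing the time change by $1/g$ delivers the required local Skorokhod continuity'' presumes that $\Qbf\mapsto(1/g)\mycdot\Qbf$ carries weak convergence for the global topology into weak convergence for the local topology, which is not among the results quoted in this paper and needs a proof (continuity of $x\mapsto(1/g)\mycdot x$ at almost every path of the limit law, or a dedicated statement from the companion paper). This part of your cycle is easily repaired, though: your argument for \ref{it3thmCvgLocFel}$\Rightarrow$\ref{it2thmCvgLocFel} (tightness from part \ref{it2PropUSCLMP} of Proposition \ref{propUSCLMP}, identification of limit points in $\Mc(L_\infty)=\{\Pbf^\infty_\mu\}$ via part \ref{it1PropUSCLMP}, then matching the initial law) works verbatim with general initial laws $\mu_n\to\mu$, which is exactly how the paper obtains \ref{it3thmCvgLocFel}$\Rightarrow$\ref{it1thmCvgLocFel} and dispenses with any inverse time change. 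The substantive defect to fix is the first one.
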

\begin{remark}
1) We may deduce a similar theorem for Feller process.\\
2) An improvement with respect to the classical result of convergence Theorem 19.25, p. 385, in \cite{Ka02}, is that one does not need to know that $\overline{L_\infty}$ is the generator of the family, 
but only the fact that the martingale local problem is well-posed. Let us point out that there are situations were the generator is not known.
\end{remark}
\begin{proof}[Proof of Theorem \ref{thmCvgLocFel}]
It is straightforward that \ref{it1thmCvgLocFel}$\Rightarrow$\ref{it2thmCvgLocFel}. The implication \ref{it3thmCvgLocFel}$\Rightarrow$\ref{it1thmCvgLocFel} is a consequence of Proposition \ref{propUSCLMP}.\\
We prove that \ref{it2thmCvgLocFel}$\Rightarrow$\ref{it3thmCvgLocFel}. We can suppose that $L_\infty$ is the generator of $(\Pbf^\infty_a)_a$.
It is straightforward to obtain that
\[\begin{array}{ccc}
\N\cup\{\infty\}\times S^\Delta&\to&\Pcal\left(\Dloc(S)\right)\\
(n,a)&\mapsto&\Pbf^n_a
\end{array}\]
is weakly continuous for the local Skorokhod topology. 
Thanks to Proposition \ref{propEqLocGlo}, 
on the connection between $\Dloc(S)$ and $\D(S^\Delta)$, 
there exists $h\in\rmC(S,\R_+^*)$ such that, 
for any $n\in\N\cup\{\infty\}$ and $a\in S$, 
\[h\mycdot\Pbf^n_a\big(\Dloc(S)\cap\D(S^\Delta)\big)=1,\]
and the mapping
\[\begin{array}{ccc}
\N\cup\{\infty\}\times S^\Delta&\to&\Pcal\left(\D(S^\Delta)\right)\\
(n,a)&\mapsto&h\mycdot\Pbf^n_a
\end{array}\]
is weakly continuous for the global Skorokhod topology. Thanks to 
Theorem \ref{FirstcharFeller}, $(\Pbf^n_a)_a$ is a Feller family, for all $n\in\N\cup\{\infty\}$. From Remark \ref{rkTCGen} and Proposition \ref{propGenFF} we deduce that: $h\overline{L_n}\cap\rmC_0(S)\times \rmC_0(S)$ is the $\rmC_0\times\rmC_0$-generator of $(\Pbf^n_a)_a$ for $n\in\N$, $hL_\infty\cap\rmC_0(S)^2$ is the $\rmC_0\times\rmC_0$-generator of $(\Pbf^\infty_a)_a$ and
\[
\overline{hL_\infty\cap\rmC_0(S)\times\rmC_0(S)}^{\rmC_0(S)\times\rmC(S)}=hL_\infty.
\] 
Take arbitrary elements $a,a_1,a_2\ldots\in S^\Delta$ and $t,t_1,t_2\ldots\in \R_+$ such that $a_n\to a$ and $t_n\to t$, then $h\mycdot\Pbf^n_{a_n}$ converges weakly for the global Skorokhod topology to $h\mycdot\Pbf^\infty_{a}$.
By Theorem \ref{thmDefLFF}, $h\mycdot\Pbf^\infty_{a}$ is quasi-continuous, so $h\mycdot\Pbf^\infty_{a}(X_{t-}=X_t)=1$. Hence, for any $f\in\rmC_0(S)$
\[
h\mycdot \Ebf^n_{a_n}[f(X_{t_n})]\cv{n\to\infty}h\mycdot\Ebf^\infty_{a}[f(X_{t})].
\]
From here we can deduce that, for any $t\geq 0$
\[
\lim_{n\to\infty}\sup_{s\leq t}\sup_{a\in S}\big|h\mycdot \Ebf^n_{a}[f(X_{s})]-h\mycdot\Ebf^\infty_{a}[f(X_{s})]\big|=0.
\]
Here and elsewhere we denote by $\Ebf^n_a$ the expectation with respect to the 
probability measure $\Pbf^n_a$.
Hence by Trotter-Kato's theorem (cf. Theorem 19.25, p. 385, \cite{Ka02}), for any $(f,g)\in hL_\infty\cap\rmC_0(S)\times\rmC_0(S)$ there exist $(f_n,g_n)\in h\overline{L_n}\cap\rmC_0(S)\times\rmC_0(S)$ such that $(f_n,g_n)\cv{n\to\infty}(f,g)$, so it is straightforward to deduce statement \ref{it3thmCvgLocFel}.
\end{proof}

%


\subsection{Localisation for martingale problems and generators}

We are interested to the localisation procedure. More precisely, assume that 
$\mathcal{U}$ is a recovering of $S$ by open sets and, for each $U\in\mathcal{U}$, let $(\Pbf^U_a)_a$ be a locally Feller family, such that for all $U_1,U_2\in\mathcal{U}$ and $a\in S$
\[
\law_{\Pbf^{U_1}_a}\left(X^{\tau^{U_1\cap U_2}}\right) = \law_{\Pbf^{U_2}_a}\left(X^{\tau^{U_1\cap U_2}}\right).
\]
We wonder if there exists a locally Feller family $(\Pbf_a)_a$ such that for all $U\in\mathcal{U}$ and $a\in S$
\[
\law_{\Pbf_a}\big(X^{\tau^U}\big) = \law_{\Pbf^U_a}\big(X^{\tau^U}\big)\;\; ?
\]
An attempt to give a answer to this question needs to reformulate it in terms of generators 
of locally Feller families. This reformulation is suggested by the following:
\begin{proposition}\label{propEqGentoEqPro}
Let $L_1,L_2\subset \rmC_0(S)\times\rmC(S)$ be such that $\rmD(L_1)=\rmD(L_2)$ is dense in $\rmC_0(S)$ and take an open subset $U\subset S$. Suppose that
\begin{itemize}
\item[-] the martingale local problem associated to $L_1$ is well-posed, and,
 
\item[-]
for all $a\in U$ there exists $\Pbf^2\in\Mc(L_2)$ with $\Pbf^2(X_0=a)=1$. 
\end{itemize}
Then
\begin{align}\label{EqLmEqGentoEqPro}
\forall\Pbf^2\in\Mc(L_2),~\exists\Pbf^1\in\Mc(L_1),\quad\quad\law_{\Pbf^2}\left(X^{\tau^U}\right) = \law_{\Pbf^1}\left(X^{\tau^U}\right)
\end{align}
if and only if
\[
\forall (f,g)\in L_2,\quad\quad g_{|U}=(L_1f)_{|U}.
\]
\end{proposition}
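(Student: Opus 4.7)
Well-posedness of the $L_1$-martingale local problem, together with density of $\rmD(L_1)$ in $\rmC_0(S)$, yields via Theorem~\ref{thmExMP} and Remark~\ref{rqunivariate} that $L_1$ (which we take to be a linear subspace so the notation $L_1 f$ makes sense) is univariate and satisfies the positive maximum principle; thus $L_1 f$ denotes the unique $g_1\in\rmC(S)$ with $(f,g_1)\in L_1$, which exists since $\rmD(L_1)=\rmD(L_2)$. Moreover, by Theorem~\ref{thmDefLFF}.\ref{item3thmDefLFF} the set $\Mc(L_1)$ is generated by a strong Markov locally Feller family $(\Pbf^1_a)_{a\in S}$, which we extend by letting $\Pbf^1_\Delta$ be the Dirac mass on the constant $\Delta$ path.

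For the direction \eqref{EqLmEqGentoEqPro}$\Rightarrow$ generator equality, fix $(f,g)\in L_2$ and $a\in U$. The hypothesis furnishes $\Pbf^2\in\Mc(L_2)$ with $\Pbf^2(X_0=a)=1$ and an associated $\Pbf^1\in\Mc(L_1)$ satisfying $\law_{\Pbf^1}(X^{\tau^U})=\law_{\Pbf^2}(X^{\tau^U})$; both therefore start from $a$. Choose an open set $V\Subset U$ containing $a$. Since $\tau^V\leq\tau^U$, the quantity $f(X_{t\wedge\tau^V})$ is a functional of $X^{\tau^U}$, so its expectations under $\Pbf^1$ and $\Pbf^2$ coincide. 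Applying the fourth part of Remark~\ref{rkBLMP} to each side,
\[
(L_1 f)(a)=\lim_{t\to 0}\frac{\Ebf^{\Pbf^1}[f(X_{t\wedge\tau^V})]-f(a)}{t}=\lim_{t\to 0}\frac{\Ebf^{\Pbf^2}[f(X_{t\wedge\tau^V})]-f(a)}{t}=g(a),
\]
and letting $a$ vary over $U$ gives $g_{|U}=(L_1 f)_{|U}$.

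For the converse, given $\Pbf^2\in\Mc(L_2)$ I construct $\Pbf^1$ by \emph{pasting} $\Pbf^2$ up to $\tau^U$ with the $L_1$-dynamics afterwards. By quasi-continuity of $\Pbf^2$ (Proposition~\ref{propLMPP}.\ref{it4PropLMPP}), $X_{\tau^U}$ takes values in $S^\Delta\setminus U$ on $\{\tau^U<\infty\}$, and one defines $\Pbf^1$ so that $\law_{\Pbf^1}(X^{\tau^U})=\law_{\Pbf^2}(X^{\tau^U})$ and, conditionally on $\Fc_{\tau^U+}$, the shifted process $(X_{\tau^U+s})_{s\geq 0}$ has law $\Pbf^1_{X_{\tau^U}}$. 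To verify $\Pbf^1\in\Mc(L_1)$, pick $(f,g_1)\in L_1$ with corresponding $(f,g_2)\in L_2$, and an open set $V\Subset S$. Using $\tau^V\wedge\tau^U=\tau^{V\cap U}$ and the equality $g_1=g_2$ on $U$, the candidate process $M^V_t:=f(X_{t\wedge\tau^V})-\int_0^{t\wedge\tau^V}g_1(X_u)\,\d u$, when stopped at $\tau^U$, coincides with the $L_2$-martingale associated to $(f,g_2)$ and $V\cap U\Subset S$; since it is a functional of $X^{\tau^U}$, it is automatically a $\Pbf^1$-martingale. The residual increment after $\tau^U$ vanishes pathwise on $\{\tau^V\leq\tau^U\}$ and, on $\{\tau^V>\tau^U\}$, reduces by strong Markov at $\tau^U$ to a $\Pbf^1_{X_{\tau^U}}$-martingale increment (quasi-continuity forcing $X_{\tau^U}\in V$ on this event, so the shifted exit time from $V$ equals $\tau^V-\tau^U$). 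The main technical obstacle is making this pasting rigorous on the Polish space $\Dloc(S)$: in particular, ensuring measurability of the concatenation map and piecing together the pre- and post-$\tau^U$ martingale properties cleanly across the stopping time.
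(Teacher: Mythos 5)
Your forward implication is essentially the paper's own argument: pick $\Pbf^2$ started at $a\in U$, a matching $\Pbf^1$ with the same law of $X^{\tau^U}$, note both start at $a$, and compare the two limits from the fourth part of Remark~\ref{rkBLMP} on a relatively compact $V\subset U$ containing $a$. That part is correct (your preliminary remark that $L_1f$ is well defined is also fine: existence of solutions from every starting point forces $L_1$ to be univariate via the same Remark).

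The converse, however, is where the real content of the proposition lies, and there your proposal has a genuine gap which you yourself flag: you never construct the pasted measure $\Pbf^1$ nor verify the martingale property across $\tau^U$. What you are sketching is essentially a re-proof, in the local/exploding setting of $\Dloc(S)$, of the localisation theorem of Ethier--Kurtz (\cite{EK86}, Theorem 6.1, p.~216): one must build the concatenation kernel measurably, check the concatenated paths stay in $\Dloc(S)$ (including the case $\tau^U=\xi$), and then prove the martingale property of $f(X_{t\wedge\tau^V})-\int_0^{t\wedge\tau^V}g_1(X_u)\,\d u$ with respect to the \emph{full} filtration $(\Fc_t)_t$. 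Even the pre-$\tau^U$ half is not ``automatic'': equality of the laws of $X^{\tau^U}$ only gives a martingale with respect to the filtration generated by the stopped process, and upgrading this to $(\Fc_t)_t$ under the pasted law requires the conditional structure of the pasted measure (a Galmarino-type identification of $\Fc_s$ with the stopped $\sigma$-algebra on $\{\tau^U>s\}$), while the post-$\tau^U$ half needs an optional-sampling/decomposition argument at the random time $\tau^U$ lying anywhere relative to $s$ and $t$. None of this is carried out, and it is precisely the hard part. The paper avoids it entirely: it applies Lemma~\ref{lmMPtoSMP} to produce $h\in\rmC(S,\R_+)$ with $\{h\neq0\}=U$ such that the martingale local problem for $hL_1$ is well-posed; the hypothesis $g_{|U}=(L_1f)_{|U}$ gives $hL_1=hL_2$, so for any $\Pbf^2\in\Mc(L_2)$ and any $\Pbf^1\in\Mc(L_1)$ with the same initial law (Proposition~\ref{propEUF}), the time-change property (Proposition~\ref{propLMPP}, part~\ref{it1PropLMPP}) yields $h\mycdot\Pbf^1,h\mycdot\Pbf^2\in\Mc(hL_1)$, whence $h\mycdot\Pbf^1=h\mycdot\Pbf^2$ by uniqueness and the stopped laws coincide. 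In other words, the concatenation work you defer is already packaged in Lemma~\ref{lmMPtoSMP} (which invokes \cite{EK86} once, in a controlled way); to make your route complete you would have to supply an analogue of that machinery yourself.
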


We postpone the proof of this proposition and we state two main results of localisation.
\begin{theorem}[Localisation for the martingale problem]\label{thmLocMP}
Let $L$ be a linear subspace of $\rmC_0(S)\times\rmC(S)$ with $\rmD(L)$ dense in $\rmC_0(S)$. Suppose that for all $a\in S$ there exist a neighbourhood $V$ of $a$ and a subset $\widetilde{L}$ of $\rmC_0(S)\times\rmC(S)$ such that the martingale local problem associated to $\widetilde{L}$ is well-posed and such that
\begin{align}\label{eqPropLocMP}
\left\{(f,g_{|V})\mid(f,g)\in L\right\} = \left\{(f,g_{|V})\mid(f,g)\in \widetilde{L}\right\}.
\end{align}
Then the martingale local problem associated to $L$ is well-posed.
\end{theorem}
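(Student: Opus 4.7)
Since $\rmD(L)$ is dense in $\rmC_0(S)$ and $L$ is linear, Theorem \ref{thmExMP} reduces existence to the positive maximum principle for $L$. Given $(f,g) \in L$ and $a_0 \in S$ with $f(a_0) = \sup_{a \in S} f(a) \geq 0$, invoking the hypothesis at $a_0$ yields an open neighbourhood $V$ of $a_0$ and a subset $\widetilde L$ with well-posed martingale local problem satisfying \eqref{eqPropLocMP}; pick $(f, \tilde g) \in \widetilde L$ with $g_{|V} = \tilde g_{|V}$. The existence half of well-posedness for $\widetilde L$ gives, via the first part of Theorem \ref{thmExMP}, the positive maximum principle for $\widetilde L$, so $\tilde g(a_0) \leq 0$ and therefore $g(a_0) = \tilde g(a_0) \leq 0$.

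\textbf{Local uniqueness.} Take $\Pbf^1, \Pbf^2 \in \Mc(L)$ both satisfying $\Pbf^i(X_0 = a) = 1$, and let $(V, \widetilde L)$ correspond to $a$ in the hypothesis. Since $\Mc(\vect(\widetilde L)) = \Mc(\widetilde L)$ by item \ref{it2PropLMPP} of Proposition \ref{propLMPP}, we may replace $\widetilde L$ by its linear span and assume $\widetilde L$ is a linear subspace; well-posedness forces the positive maximum principle, so by Remark \ref{rqunivariate} the set $\widetilde L$ is univariate. Comparing first components in \eqref{eqPropLocMP} gives $\rmD(\widetilde L) = \rmD(L)$, which is dense, and \eqref{eqPropLocMP} rewrites as $g_{|V} = (\widetilde L f)_{|V}$ for every $(f,g) \in L$. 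The hypotheses of Proposition \ref{propEqGentoEqPro} are therefore met with $L_1 := \widetilde L$, $L_2 := L$, $U := V$, producing $\widetilde \Pbf^i \in \Mc(\widetilde L)$ with $\law_{\Pbf^i}(X^{\tau^V}) = \law_{\widetilde \Pbf^i}(X^{\tau^V})$ and in particular $\widetilde \Pbf^i(X_0 = a) = 1$. Well-posedness of $\widetilde L$ then forces $\widetilde \Pbf^1 = \widetilde \Pbf^2$, whence $\law_{\Pbf^1}(X^{\tau^V}) = \law_{\Pbf^2}(X^{\tau^V})$, i.e.\ $\Pbf^1$ and $\Pbf^2$ agree on $\Fc_{\tau^V}$.

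\textbf{Globalisation and main obstacle.} To upgrade this local equality to $\Pbf^1 = \Pbf^2$, I would iterate via conditioning. Fix a countable open cover $\{W_i\}_i$ of $S$ with $W_i \Subset V_i$ and each $V_i$ paired with a well-posed $\widetilde L_i$ satisfying \eqref{eqPropLocMP}. Set $\sigma_0 := 0$ and, given $\sigma_k$, let $i(k)$ be a measurable selection with $X_{\sigma_k} \in W_{i(k)}$ and let $\sigma_{k+1}$ denote the first exit of $X$ from $V_{i(k)}$ after $\sigma_k$ (with $\sigma_{k+1} := \sigma_k$ when $X_{\sigma_k} = \Delta$). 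By Proposition \ref{propCondLMP} the regular conditional distribution of $(X_{\sigma_k + t})_{t \geq 0}$ given $\Fc_{\sigma_k +}$ lies $\Pbf^i$-a.s.\ in $\Mc(L)$ and starts at $X_{\sigma_k}$; applying the local step above to this conditional law determines $\law_{\Pbf^i}(X^{\sigma_{k+1}})$ from $\law_{\Pbf^i}(X^{\sigma_k})$, and induction yields $\law_{\Pbf^1}(X^{\sigma_k}) = \law_{\Pbf^2}(X^{\sigma_k})$ for every $k$. The main technical obstacle, and the step I expect to be most delicate, is showing $\sigma_k \nearrow \xi$ almost surely: if on the contrary $\sigma_\infty := \lim_k \sigma_k$ satisfied $\sigma_\infty < \xi$, the quasi-continuity of item \ref{it4PropLMPP} of Proposition \ref{propLMPP} would force $X_{\sigma_k} \to X_{\sigma_\infty} \in S$, hence $X_{\sigma_\infty} \in W_j$ for some $j$; then $i(k) = j$ for all large $k$, and a uniform lower bound on the exit times from $V_j$ starting deep inside $W_j$, obtained by applying Lemma \ref{lmUQC} to the family $\Mc(\widetilde L_j)$, contradicts $\sigma_\infty < \infty$. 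Implementing this compactness and tightness argument rigorously is the genuine burden of the proof, which I would carry out in the appendix.
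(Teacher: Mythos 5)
Your existence half coincides with the paper's proof (transfer of the positive maximum principle from $\widetilde L$ via Theorem \ref{thmExMP}), and your local uniqueness step is sound: applying Proposition \ref{propEqGentoEqPro} with $L_1=\vect(\widetilde L)$, $L_2=L$, $U=V$ is legitimate and not circular, since that proposition rests on Lemma \ref{lmMPtoSMP}, proved independently in the appendix. The genuine gap is the globalisation step, which is precisely the hard part of the theorem and which you only sketch. Your induction ``applying the local step to the conditional law determines $\law_{\Pbf^i}(X^{\sigma_{k+1}})$ from $\law_{\Pbf^i}(X^{\sigma_k})$'' needs (i) a \emph{measurable} version of the map $b\mapsto$ (law of the unique solution started at $b$ stopped at exit from $V_{i(b)}$), so that it can serve as a transition kernel, (ii) a concatenation/pasting argument identifying $\law_{\Pbf^i}(X^{\sigma_{k+1}})$ with the composition of $\law_{\Pbf^i}(X^{\sigma_k})$ with that kernel, and (iii) the proof that $\sigma_k\uparrow\xi$. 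Items (i)--(iii) are exactly the content of Theorems 6.1 and 6.2, pp.~216--217 of \cite{EK86}, and the paper's proof does not re-derive them: for a fixed $V\Subset S$ it covers $\overline V$ by finitely many $U_n$ with well-posed $L_n$, uses Lemma \ref{lmMPtoSMP} to produce a common domain $D$ and a function $h$ with $\{h\neq0\}=V$ such that $hL_n{}_{|D}\subset\rmC_0(S)\times\rmC_0(S)$ and the problems $\big(hL_n{}_{|D}\big)^\Delta$ are well-posed in $\D(S^\Delta)$ (so explosion disappears), adds the pair $L_{N+1}=D\times\{0\}$ on $S^\Delta\setminus\overline V$, applies the Ethier--Kurtz localisation to $(hL_{|D})^\Delta$, gets $h\mycdot\Pbf^1=h\mycdot\Pbf^2$, hence $\law_{\Pbf^1}(X^{\tau^V})=\law_{\Pbf^2}(X^{\tau^V})$, and lets $V$ grow to $S$.

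Re-proving the pasting theorem directly in the local setting, as you propose, is a legitimate alternative route, but you have not carried it out: you explicitly defer ``the genuine burden of the proof'' to an unwritten appendix, and the one piece you do sketch is flawed as stated. From $X_{\sigma_k}\to X_{\sigma_\infty}\in W_j$ it does not follow that $i(k)=j$ for large $k$ (a measurable selection may oscillate among the sets containing $X_{\sigma_k}$), so the required lower bound on exit times must be made uniform; one way is to forget the selection and apply Lemma \ref{lmUQC} directly to $\Mc(L)$ with $\Kc$ a fixed compact neighbourhood of $X_{\sigma_\infty}$ contained in some $W_j$ and $\Uc:=V_j^2\cup(S\setminus\Kc)^2$, which bounds below the exit time of $V_j$, hence of $V_{i(k)}$ only after one further argument relating the two. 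The measurability of the kernel in (i) is not addressed at all. As written, the proposal establishes existence and uniqueness up to the first exit from one chart, but not the theorem; either complete the pasting argument in detail or, as the paper does, reduce to the classical well-posed conservative setting by the time change of Lemma \ref{lmMPtoSMP} and invoke the known localisation theorems.
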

\begin{proof}
Thanks to Theorem \ref{thmExMP}, to prove the existence of a solution for the martingale local problem it suffices to prove that $L$ satisfies the positive maximum principle. Let $(f,g)\in L$ and $a\in S$ be such that $f(a)=\max f\geq 0$. Then there exist a neighbourhood $V$ of $a$ and a subset $\widetilde{L}$ of $\rmC_0(S)\times\rmC(S)$ such that the martingale local problem associated to $\widetilde{L}$ is well-posed and \eqref{eqPropLocMP}. In particular, by Theorem \ref{thmExMP}, $\widetilde{L}$ satisfies the positive maximum principle and so
\[
g(a)=\widetilde{L}f(a)\leq 0.
\]
To prove the uniqueness of the solution for the martingale local problem, we take $\Pbf^1,\Pbf^2\in\Mc(L)$ and an arbitrary open subset $V\Subset S$. By hypothesis and using the relative compactness of $V$, there exist $N\in\N$, open subsets $U_1,\ldots,U_N\subset S$ and subsets $L_1,\ldots,L_N\subset\rmC_0(S)\times\rmC(S)$ such that $V\Subset\bigcup_nU_n$, such that for all $1\leq n\leq N$ the martingale local problem associated to $L_n$ is well-posed and such that
\[
\left\{(f,g_{|U_n})\mid(f,g)\in L\right\} = \left\{(f,g_{|U_n})\mid(f,g)\in \widetilde{L_n}\right\}.
\]
At this level of the proof we need a technical but important result:
\begin{lemma}\label{lmMPtoSMP}
Let $U$ be an open subset of $S$ and $L$ be a subset of $\rmC_0(S)\times\rmC(S)$ such that $\rmD(L)$ is dense in $\rmC(S)$ and the martingale local problem associated to $L$ is well-posed. Then there exist a subset $L_0$ of $L$ and a function $h_0$ of $\rmC(S,\R_+)$ with $\{h_0\not =0\}=U$ such that $\overline{L}=\overline{L_0}$, such that $h_0L_0\subset\rmC_0(S)\times\rmC_0(S)$ and such that: for any $h\in\rmC(S,\R_+)$ with  $\{h\neq 0\}=U$ and  $\sup_{a\in U} (h/h_0)(a)<\infty$, the martingale problem associated to $(hL_0)^\Delta$ is well-posed in $\D(S^\Delta)$. Recall that $(hL_0)^\Delta$ is defined by \eqref{eqLDlt} and that the associated martingale problem is defined by \eqref{eqMgDlt}.
\end{lemma}

We postpone the proof of lemma to the Appendix (see \S \ref{secProofLmMPtoSMP}) and we proceed with the proof of our theorem.

Applying Lemma \ref{lmMPtoSMP}, there exist a subset $D$ of $\rmC_0(S)$ and a function $h$ of $\rmC(S,\R_+)$ with $\{h\not =0\}=V$ such that for all $1\leq n\leq N$: $\overline{L_n}=\overline{L_n\,_{|D}}$, $hL_n\,_{|D}\subset\rmC_0(S)\times\rmC_0(S)$ and the martingale problem associated to $\big(hL_n\,_{|D}\big)^\Delta$ is well-posed.
Denote $L_{N+1}:=D\times\{0\}$ and $U^{N+1}:=S^\Delta\backslash \overline{V}$. We may now apply Theorem 6.2 and also Theorem 6.1 pp. 216-217, in \cite{EK86}  to $hL_{|D}$ and $(U_n)_{1\leq n\leq N+1}$ and we deduce that the martingale problem associated to $(hL_{|D})^\Delta$ is well-posed. Hence $h\mycdot\Pbf^1=h\mycdot\Pbf^2$ so
\[
\law_{\Pbf^1}(X^{\tau^V})=\law_{\Pbf^2}(X^{\tau^V}).
\]
We obtain the result by letting $V$ to grow toward $S$. This ends the proof of the theorem except to the proof of Lemma \ref{lmMPtoSMP} postponed to \S \ref{secProofLmMPtoSMP}.
\end{proof}
\begin{theorem}[Localisation of generator]\label{thmLocGen}
Let $L$ be a linear subspace of $\rmC_0(S)\times\rmC(S)$ with $\rmD(L)$ dense in $\rmC_0(S)$. Suppose that for all subsets $V\Subset S$ there exists a linear subspace $\widetilde{L}$ of $\rmC_0(S)\times\rmC(S)$ such that $\overline{\widetilde{L}}$ is the generator of a locally Feller family and
\begin{align*}
\left\{(f,g_{|V})\mid(f,g)\in L\right\} = \left\{(f,g_{|V})\mid(f,g)\in \widetilde{L}\right\}.
\end{align*}
Then $\overline{L}$ is the generator of a locally Feller family.
\end{theorem}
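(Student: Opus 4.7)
The plan is to first establish well-posedness of the martingale local problem associated to $L$ via Theorem \ref{thmLocMP}, then extract a locally Feller family $(\Pbf_a)_a$ via Theorem \ref{thmDefLFF}, and finally identify $\overline{L}$ with its $\rmC_0\times\rmC$-generator $L^{\mathrm{gen}}$. For the first step, fix $a\in S$ and a relatively compact open neighbourhood $V$ of $a$. The hypothesis supplies $\widetilde{L}$ whose closure is the generator of a locally Feller family; Theorem \ref{thmGenDesc} yields $\Mc(\overline{\widetilde{L}})=\{\Pbf_\mu\}_{\mu\in\Pcal(S^\Delta)}$ and Proposition \ref{propLMPP}(\ref{it2PropLMPP}) gives $\Mc(\widetilde{L})=\Mc(\overline{\widetilde{L}})$, so the martingale local problem for $\widetilde{L}$ is well-posed, while the trace condition is supplied directly by the hypothesis. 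Theorem \ref{thmLocMP} then yields well-posedness for $L$. Proposition \ref{propEUF} provides a family $(\Pbf_a)_a$ with $\Mc(L)=\{\Pbf_\mu\}_{\mu\in\Pcal(S^\Delta)}$, and the density of $\rmD(L)$ combined with characterisation \ref{item3thmDefLFF} of Theorem \ref{thmDefLFF} shows that $(\Pbf_a)_a$ is locally Feller.

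It remains to show $\overline{L}=L^{\mathrm{gen}}$. The inclusion $\overline{L}\subset L^{\mathrm{gen}}$ is immediate: $L\subset L^{\mathrm{gen}}$ since $(\Pbf_a)_a$ solves the martingale local problem for $L$, and $L^{\mathrm{gen}}$ is closed by Theorem \ref{thmGenDesc}; in particular $\overline{L}$ is univariate and satisfies the positive maximum principle, being contained in $L^{\mathrm{gen}}$. For the converse, fix $(f,g)\in L^{\mathrm{gen}}$ and a compact exhaustion $V_1\Subset V_2\Subset\cdots$ with $\bigcup_n V_n=S$. For each $n$ the hypothesis provides $\widetilde{L}_n$ whose closure is the generator of a locally Feller family $(\widetilde{\Pbf}^n_a)_a$. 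Since $\rmD(\widetilde{L}_n)=\rmD(L)$ (the first projections of the trace sets coincide), Proposition \ref{propEqGentoEqPro} applied with $L_1=\widetilde{L}_n$, $L_2=L$ and $U=V_n$ in the trace-to-coupling direction produces
\[
\law_{\Pbf_a}\big(X^{\tau^{V_n}}\big)=\law_{\widetilde{\Pbf}^n_a}\big(X^{\tau^{V_n}}\big)\quad\text{for every }a\in S.
\]
Combining this coupling with characterisation \ref{eq3thmGen} of Theorem \ref{thmGenDesc} applied to $(\Pbf_a)_a$, for every $a\in V_n$ and every open $U\Subset V_n$ containing $a$ the limit $\frac{1}{t}\big(\widetilde{\Ebf}^n_a[f(X_{t\wedge\tau^U})]-f(a)\big)\to g(a)$ holds. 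The next goal is to produce approximants $f_{n,k}\in\rmD(\widetilde{L}_n)$ with $f_{n,k}\to f$ in $\rmC_0(S)$ and $(\widetilde{L}_nf_{n,k})|_{V_n}\to g|_{V_n}$ uniformly on $V_n$. Granted these, the trace hypothesis replaces each pair $(f_{n,k},\widetilde{L}_nf_{n,k})\in\widetilde{L}_n$ by a pair $(f_{n,k},\ell_{n,k})\in L$ with $\ell_{n,k}|_{V_n}=(\widetilde{L}_nf_{n,k})|_{V_n}$, and a diagonal extraction $(f_{n,k_n},\ell_{n,k_n})$ with $\|f_{n,k_n}-f\|_{\rmC_0}<1/n$ and $\sup_{V_n}|\ell_{n,k_n}-g|<1/n$ converges to $(f,g)$ in $\rmC_0(S)\times\rmC(S)$, so $(f,g)\in\overline{L}$.

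The main obstacle is producing the above approximants. My plan is to upgrade the pointwise local generator identity to the statement $f\in\rmD(\overline{\widetilde{L}_n})$ with $\overline{\widetilde{L}_n}f|_{V_n}=g|_{V_n}$: the coupling already gives the infinitesimal limit characterising the generator at points of $V_n$, so one must extend it to points outside $V_n$ and obtain uniform rather than merely pointwise control. To handle this I expect to use characterisation \ref{item2thmDefLFF} of Theorem \ref{thmDefLFF} to choose $h\in\rmC(S,\R_+^*)$ for which $(h\mycdot\widetilde{\Pbf}^n_a)_a$ is Feller, then invoke Proposition \ref{propGenFF} together with a Hille--Yosida resolvent argument on the resulting Feller semigroup to pass from pointwise to uniform convergence, and finally undo the time change using \ref{it1PropLMPP} of Proposition \ref{propLMPP}. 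Once $f\in\rmD(\overline{\widetilde{L}_n})$ is established, the very definition of the closure furnishes the required sequence $f_{n,k}$, and the diagonal argument concludes the proof.
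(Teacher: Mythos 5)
Your first half is sound and matches the paper: well-posedness of the martingale local problem for $L$ via Theorem \ref{thmLocMP} (with the hypotheses of that theorem checked through Theorem \ref{thmGenDesc} and part \ref{it2PropLMPP} of Proposition \ref{propLMPP}), extraction of the locally Feller family $(\Pbf_a)_a$, the inclusion $\overline{L}\subset L^{\mathrm{gen}}$, and the stopped-law couplings $\law_{\Pbf_a}(X^{\tau^{V_n}})=\law_{\widetilde{\Pbf}^n_a}(X^{\tau^{V_n}})$ via Proposition \ref{propEqGentoEqPro} are all correct and are exactly the paper's first steps. The gap is in the converse inclusion $L^{\mathrm{gen}}\subset\overline{L}$, which you reduce to an intermediate claim that is never proved and is in fact false in general: that $f\in\rmD(\overline{\widetilde{L}_n})$ with $\overline{\widetilde{L}_n}f=g$ on $V_n$. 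Membership in the domain of a generator is not a local property: the coupling only constrains $(\widetilde{\Pbf}^n_a)_a$ up to $\tau^{V_n}$, and outside $V_n$ its domain may impose conditions that $f$ violates. For instance, take $(\Pbf_a)_a$ Brownian motion on $\R$, $V_n=(-1,1)$, and $(\widetilde{\Pbf}^n_a)_a$ a skew Brownian motion with skew point $3$: the stopped laws at $\tau^{V_n}$ coincide for every starting point, yet a function $f$ with continuous second derivative and $f'(3)\neq 0$ lies in the Brownian $\rmC_0\times\rmC$-generator but not in the skew one. The weaker statement you actually need (approximants $f_{n,k}\in\rmD(\widetilde{L}_n)$ with $f_{n,k}\to f$ in $\rmC_0(S)$ and $\widetilde{L}_nf_{n,k}\to g$ uniformly on $V_n$) is left as a plan, and the tools you invoke do not deliver it: the resolvent of the time-changed Feller semigroup gives $\lambda R_\lambda f\to f$ in $\rmC_0$, but the uniform convergence of $\lambda(\lambda R_\lambda f-f)$ to $hg$ on $V_n$ is precisely what requires $f$ to be in the domain (or a genuinely new argument controlling exits near $\partial V_n$), so the "Hille--Yosida resolvent argument" is circular as sketched.

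The paper avoids any fixed-$n$ domain statement by working with the whole sequence at once. From the couplings and Lemma \ref{lemLocCont} (localisation of continuity) it deduces that $(n,a)\mapsto\Pbf^n_a$, with $\Pbf^\infty_a:=\Pbf_a$, is weakly continuous on $\N\cup\{\infty\}\times S^\Delta$ for the local Skorokhod topology; this is condition \ref{it2thmCvgLocFel} of Theorem \ref{thmCvgLocFel}, whose implication \ref{it2thmCvgLocFel}$\Rightarrow$\ref{it3thmCvgLocFel} (proved there through a single global time change and Trotter--Kato) then produces, for each $f\in\rmD(L_\infty)$, pairs $(f_n,L_nf_n)\to(f,L_\infty f)$ in $\rmC_0(S)\times\rmC(S)$; the trace identity converts these into pairs $(f_n,Lf_n)$ of $L$, giving $L_\infty\subset\overline{L}$ and hence $\overline{L}=L_\infty$. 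Your diagonal-extraction idea is close in spirit to this conclusion, but without Lemma \ref{lemLocCont} and Theorem \ref{thmCvgLocFel} (or an equivalent sequence-level compactness/convergence argument) the approximants it requires are not available, so as written the proof is incomplete at its crucial step.
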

\begin{proof} 
Thanks to Theorem \ref{thmLocMP} the martingale local problem associated to $L$ is well-posed, let $(\Pbf^\infty_a)_a$ the locally Feller family associate to $L$. Let $L_\infty$ be the generator of $(\Pbf^\infty_a)_a$.
Let $U_n\Subset S$ be an increasing sequence of open subsets such that $S=\bigcup_nU_n$ and  let $L_n\subset\rmC_0(S)\times\rmC(S)$ be such that for all $n\in\N$, $\overline{L_n}$ is the generator of a locally Feller family $(\Pbf^n_a)_a$ and
\begin{align}\label{eq1PropLocGen}
\left\{(f,g_{|U_n})\mid(f,g)\in L\right\} = \left\{(f,g_{|U_n})\mid(f,g)\in L_n\right\}. 
\end{align}
Then by using Proposition \ref{propEqGentoEqPro}, for all $n\in\N$ and $a\in S$
\begin{equation}\label{A18}
\law_{\Pbf^\infty_a}\left(X^{\tau^{U_n}}\right) =\law_{\Pbf^n_a}\left(X^{\tau^{U_n}}\right).
\end{equation}
At this level we use a result of localisation of the continuity stated and proved in \S \ref{secProofslocallyFeller}, Lemma \ref{lemLocCont}. Therefore, by \eqref{A18} the mapping
\[\begin{array}{ccc}
\N\cup\{\infty\}\times S^\Delta&\to&\Pcal(\Dloc(S))\\
(n,a)&\mapsto&\Pbf_a^n
\end{array}\]
is weakly continuous for the local Skorokhod topology.
Hence by Theorem \ref{thmCvgLocFel}, for any $f\in\rmD(L_\infty)$ there exists $(f_n)_n\in\rmD(L)^\N$ such that $(f_n,L_nf_n)\cv{n\to\infty}(f,L_\infty f)$, so by \eqref{eq1PropLocGen} $(f_n,Lf_n)\cv{n\to\infty}(f,L_\infty f)$. Hence $\overline{L}=L_\infty$ is the generator of a locally Feller family. The proof of the theorem is complete except for the proof of Proposition \ref{propEqGentoEqPro}.
\end{proof}
\begin{proof}[Proof of Proposition \ref{propEqGentoEqPro}]
Suppose \eqref{EqLmEqGentoEqPro}. For each $a\in U$, take an open subset $V\subset U$, $\Pbf^1\in\Mc(L_1)$ and $\Pbf^2\in\Mc(L_2)$ such that $a\in V\Subset S$ and $\Pbf^1(X_0=a)=\Pbf^2(X_0=a)=1$. By using the fourth part of Remark \ref{rkBLMP}  we have for each $(f,g)\in L_2$
\[
g(a)=\lim_{t\to 0}\frac{1}{t}\Big(\Ebf^2\left[f(X_{t\wedge\tau^V})\right]-f(a)\Big)=\lim_{t\to 0}\frac{1}{t}\Big(\Ebf^1\left[f(X_{t\wedge\tau^V})\right]-f(a)\Big)=L_1f(a).
\]
For the converse, by Lemma \ref{lmMPtoSMP} there exists $h\in\rmC(S,\R_+)$ with $\{h\not=0\}=U$ such that the martingale local problem associated to $hL_1=hL_2$ is well-posed. Take $\Pbf^2\in\Mc(L_2)$ and let $\Pbf^1\in\Mc(L_1)$ be such that $\law_{\Pbf^1}(X_0)=\law_{\Pbf^2}(X_0)$, then $h\mycdot\Pbf^1,h\mycdot\Pbf^2\in\Mc(hL_1)$ so $h\mycdot\Pbf^1=h\mycdot\Pbf^2$ and hence \eqref{EqLmEqGentoEqPro} is verified.
\end{proof}

\appendix\section{Appendix: proof of technical results} 
\subsection{Proofs of Propositions \ref{propLMPP} and \ref{propUSCLMP}}\label{preuves_entrelacees}
Remind that the proofs of Propositions \ref{propLMPP} and \ref{propUSCLMP}
are interlaced and will be performed in several ordered steps.
\begin{proof}[Proof of Lemma \ref{lmUQC}]
Take a metric $d$ on $S$ and $a_0\in \Kc$, then there exists $\eps_0>0$ such that $B(a_0,4\eps_0)\Subset S$ and $\left\{(a,b)\in S^2\mid a\in \Kc,~d(a,b)<3\eps_0\right\}\subset \Uc$. Define
\[
\widetilde{f}(a):=\left\{\begin{array}{ll}
1, & \text{if } d(a,a_0)\leq\eps_0,\\
0, & \text{if } d(a,a_0)\geq2\eps_0,\\
2-\frac{d(a,a_0)}{\eps_0},& \text{if } \eps_0\leq d(a,a_0)\leq2\eps_0.
\end{array}\right.\]
Then
\[\widetilde{f}\in\rmC_0(S),\quad 0\leq \widetilde{f}\leq 1,\quad\forall\,a\in B(a_0,\eps_0), \,\widetilde{f}(a) = 1\,\mbox{ and }\,\{ \widetilde{f}\not=0\}\subset B(a,3\eps_0).\]
Let $\eta>0$ be  arbitrary. There exist $(f,g)\in L$ and a sequence $(f_n,g_n)\in L_n$ 
such that $\|f-\widetilde{f}\|\leq\eta$ and the sequence $(f_n,g_n)_{n}$ converges to $(f,g)$ for the topology of $\rmC_0(S)\times\rmC(S)$.
Let $\tau_1\leq\tau_2$ be $(\Fc_{t+})_t$-stopping times and let $n$ be in $\N$, assume that $\Pbf\in\Mc(L_n)$. For $\eps<3\eps_0$ we denote
\[
\sigma_\eps:=\inf\Big\{t\geq \tau_1\,\Big|\, t\geq \xi\text{ or }\sup_{\tau_1\leq s\leq t}d(X_{\tau_1},X_s)\geq\eps \Big\}.
\]
Let an open subset $V\Subset S$ be such that $V\supset B(a_0,4\eps_0)$. If $t\geq 0$ and $\eps<3\eps_0$ we can write
\begin{align}\label{eq1LmUQC}\begin{split}
\Ebf\left[f_n(X_{t\wedge\tau^V\wedge\sigma_\eps\wedge\tau_2})\mathds{1}_{\{X_{\tau_1}\in B(a_0,\eps_0)\cap \Kc\}}\right]\hspace{-4cm}&\\
&=\Ebf\left[\Big(f_n(X_{t\wedge\tau^V\wedge\tau_1})+\int_{t\wedge\tau^V\wedge\tau_1}^{t\wedge\tau^V\wedge\sigma_\eps\wedge\tau_2}g_n(X_s)\d s\Big)\mathds{1}_{\{X_{\tau_1}\in B(a_0,\eps_0)\cap \Kc\}}\right]\\
&\geq\Ebf\left[\widetilde{f}(X_{t\wedge\tau^V\wedge\tau_1})\mathds{1}_{\{X_{\tau_1}\in B(a_0,\eps_0)\cap \Kc\}}\right]-\|\widetilde{f}-f_n\|\\
&\quad+\Ebf\left[\int_{t\wedge\tau^V\wedge\tau_1}^{t\wedge\tau^V\wedge\sigma_\eps\wedge\tau_2}g_n(X_s)\d s\mathds{1}_{\{X_{\tau_1}\in B(a_0,\eps_0)\cap \Kc\}}\right]\\
&\geq \Pbf\big(X_{\tau_1}\in B(a_0,\eps_0)\cap \Kc\big)-\Pbf\big(t\wedge\tau^V<\tau_1<\xi\big)-\eta-\|f-f_n\|\\
&\quad-\Ebf\big[(\tau_2-\tau_1)\mathds{1}_{\{X_{\tau_1}\in \Kc\}}\big]\cdot\|g_n\|_{B(a_0,4\eps_0)}.
\end{split}\end{align}
Splitting on the events $\{\sigma_\eps>\tau_2\}$, $\{\sigma_\eps\leq t\wedge\tau^V\wedge \tau_2\}$ and $\{t\wedge\tau^V<\sigma_\eps\leq\tau_2\}$
\begin{align}\label{eq2LmUQC}\begin{split}
\Ebf\left[f_n(X_{t\wedge\tau^V\wedge\sigma_\eps\wedge\tau_2})\mathds{1}_{\{X_{\tau_1}\in B(a_0,\eps_0)\cap \Kc\}}\right]\hspace{-5cm}&\\
&\leq \Pbf\big(X_{\tau_1}\in B(a_0,\eps_0)\cap \Kc,~\sigma_\eps>\tau_2\big)+\eta+\|f-f_n\|\\
&\quad+ \Ebf\big[f_n(X_{\sigma_\eps})\mathds{1}_{\{X_{\tau_1}\in B(a_0,\eps_0)\}}\big]
+\Pbf\big(X_{\tau_1}\in \Kc,~t<\tau_2\big)+\eta+\|f-f_n\|.
\end{split}\end{align}
Hence by \eqref{eq1LmUQC} and \eqref{eq2LmUQC},
\begin{multline*}
\Pbf\big(X_{\tau_1}\in B(a_0,\eps_0)\cap \Kc,~\tau(\tau_1)\leq\tau_2\big)
\leq\Pbf\big(X_{\tau_1}\in B(a_0,\eps_0)\cap \Kc,~\sigma_\eps\leq\tau_2\big)\\
\leq 3\eta+3\|f-f_n\| 
+\Pbf\big(t\wedge\tau^V<\tau_1<\xi\big)
+\Ebf\big[(\tau_2-\tau_1)\mathds{1}_{\{X_{\tau_1}\in \Kc\}}\big]\cdot \|g_n\|_{B(a_0,4\eps_0)}\\
+ \Ebf\big[f_n(X_{\sigma_\eps})\mathds{1}_{\{X_{\tau_1}\in B(a_0,\eps_0)\}}\big]
+\Pbf\big(X_{\tau_1}\in \Kc,~t<\tau_2\big).
\end{multline*}
Since the limit $\lim_{\eps\uparrow3\eps_0}X_{\sigma_\eps}$ exists and is in $S^\Delta\backslash B(X_{\tau_1},3\eps_0)$ we have
\begin{align*}
\limsup_{\eps\uparrow3\eps_0}\Ebf\big[f_n(X_{\sigma_\eps})\mathds{1}_{\{X_{\tau_1}\in B(a_0,\eps_0)\}}\big]
&\leq \|f_n\|_{B(a_0,2\eps_0)^c}
\leq \|f-f_n\|+\|f-\widetilde{f}\|+\|\widetilde{f}\|_{B(a_0,2\eps_0)^c}\\
&\leq \|f-f_n\|+\delta,
\end{align*}
so
\begin{align*}
\Pbf\big(X_{\tau_1}\in B(a_0,\eps_0)\cap \Kc,~\tau(\tau_1)\leq\tau_2\big)\hspace{-3cm}&\\
&\leq 4\eta+4\|f-f_n\|
+\Pbf\big(t\wedge\tau^V<\tau_1<\xi\big)\\
&\quad+\Ebf\big[(\tau_2-\tau_1)\mathds{1}_{\{X_{\tau_1}\in \Kc\}}\big]\cdot\|g_n\|_{B(a_0,4\eps_0)}
+\Pbf\big(X_{\tau_1}\in \Kc,~t<\tau_2\big).
\end{align*}
Letting $t\to\infty$ and $V$ growing to $S$, $\Pbf\big(t\wedge\tau^V<\tau_1<\xi\big)$ tends to $0$, hence
\begin{multline*}
\Pbf\big(X_{\tau_1}\in B(a_0,\eps_0)\cap \Kc,~\tau(\tau_1)\leq\tau_2\big)\\
\leq 4\eta+4\|f-f_n\|
+\Ebf[(\tau_2-\tau_1)\mathds{1}_{\{X_{\tau_1}\in \Kc\}}]\cdot\|g_n\|_{B(a_0,4\eps_0)}\\
+\Pbf\big(X_{\tau_1}\in \Kc,~\tau_2=\infty\big).
\end{multline*}
So letting $n\to\infty$, $\Ebf[(\tau_2-\tau_1)\mathds{1}_{\{X_{\tau_1}\in \Kc\}}]\to 0$ and $\eta\to 0$ we deduce that for each $\eps>0$ there exist $n_0\in\N$ and $\delta>0$ such that: for any $n\geq n_0$, $(\Fc_{t+})_t$-stopping times $\tau_1\leq\tau_2$ and $\Pbf\in\Mc(L_n)$ satisfying $\Ebf[(\tau_2-\tau_1)\mathds{1}_{\{X_{\tau_1}\in \Kc\}}]\leq\delta$ we have
\[
\Pbf(X_{\tau_1}\in B(a_0,\eps_0)\cap \Kc,~\tau(\tau_1)\leq \tau_2)\leq\eps.
\]
We conclude since $a_0$ was arbitrary chosen in $\Kc$ and by using a finite recovering of the compact $\Kc$.
\end{proof}
\begin{proof}[Proof of part \ref{it4PropLMPP} of Proposition \ref{propLMPP}]~

\emph{Step 1: we prove the $(\Fc_{t+})_t$-quasi-continuity before the explosion time $\xi$.}
Let $\tau_n,\tau$ be $(\Fc_{t+})_t$-stopping times and  denote $\widetilde{\tau}_n:=\inf_{m\geq n}\tau_m$, $\widetilde{\tau}:=\sup_{n\in\N}\widetilde{\tau}_n$ and
\[A:=\left\{\begin{array}{ll}
\lim_{n\to\infty}X_{\widetilde{\tau}_n},&\text{if the limit exists,}\\
\Delta,&\text{otherwise.}
\end{array}\right.\]
Let $d$ be a metric on $S^\Delta$ and take $\eps>0$, $t\geq 0$ and an open subset $U\Subset S$.  Since 
\[\lim_{n\to\infty}\Ebf\big[\widetilde{\tau}\wedge t\wedge\tau^U-\widetilde{\tau}_n\wedge t\wedge\tau^U\big]=0,\]
by Lemma \ref{lmUQC} applied to $\Kc:=\overline{U}$ and $\Uc=\left\{(a,b)\in S^2\mid d(a,b)<\eps\right\}$ we get
\[
\Pbf\big(X_{\widetilde{\tau}_n\wedge t\wedge\tau^U}\in U,~d(X_{\widetilde{\tau}_n\wedge t\wedge\tau^U},X_{\widetilde{\tau}\wedge t\wedge\tau^U})\geq\eps\big)\cv{n\to\infty}0.
\]
Hence
\begin{align*}
\Pbf\big(\widetilde{\tau}\leq t\wedge\tau^U,~d(X_{\widetilde{\tau}_n},X_{\widetilde{\tau}})\geq\eps\big)
&=\Pbf\big(\widetilde{\tau}_n<\widetilde{\tau}\leq t\wedge\tau^U,~d(X_{\widetilde{\tau}_n},X_{\widetilde{\tau}})\geq\eps\big)\\
&\leq\Pbf\big(X_{\widetilde{\tau}_n\wedge t\wedge\tau^U}\in U,~d(X_{\widetilde{\tau}_n\wedge t\wedge\tau^U},X_{\widetilde{\tau}\wedge t\wedge\tau^U})\geq\eps\big).
\end{align*}
Letting $n\to\infty$ on the both sides of the latter inequality  we obtain that
\[
\Pbf\big(\widetilde{\tau}\leq t\wedge\tau^U,~d(A,X_{\widetilde{\tau}})\geq\eps\big)=0.
\]
Then, successively if $t\to\infty$, $U$ growing to $S$ and $\eps\to 0$ it follows that
\[
\Pbf\big(\widetilde{\tau}<\infty,~\{X_s\}_{s<\widetilde{\tau}}\Subset S,~A\not=X_{\widetilde{\tau}}\big)=0.
\]
We deduce
\begin{align}
\nonumber&\Pbf\big(X_{\tau_n}\underset{n\to\infty}{{\longrightarrow\hspace{-0.45cm}\not\hspace{0.45cm}}}X_\tau,~\tau_n\cv{n\to\infty}\tau<\infty,~\{X_s\}_{s<\tau}\Subset S\big)\\
\label{eqPropQC}&\hspace{3cm}=\Pbf\big(A\not=X_{\widetilde{\tau}},~\tau_n\cv{n\to\infty}\tau=\widetilde{\tau}<\infty,~\{X_s\}_{s<\widetilde{\tau}}\Subset S\big)=0.
\end{align}
\emph{Step 2: we prove that $\Pbf\big(\Dloc(S)\cap\D(S^\Delta)\big)=1$.}
Let $K$ be a compact subset of $S$ and take an open subset $U\Subset S$ containing $K$. For $n\in\N$ define the stopping times
\begin{align*}
&\sigma_0:=0,\\
&\tau_n:=\inf\left\{t\geq\sigma_n\mid\{X_s\}_{\sigma_n\leq s\leq t}\not\Subset S\backslash K\right\},\\
&\sigma_{n+1}:=\inf\left\{t\geq\tau_n\mid\{X_s\}_{\tau_n\leq s\leq t}\not\Subset U\right\}.
\end{align*}
Let $V_k\Subset S\backslash K$ be an increasing sequence of open subset such that $S\backslash K=\bigcup_kV_k$, and denote $\tau_n^k:=\inf\left\{t\geq\sigma_n\mid\{X_s\}_{\sigma_n\leq s\leq t}\not\Subset V_k\right\}$. Then, by \eqref{eqPropQC}
\[
\Pbf\Big(X_{\tau_n^k}\underset{k\to\infty}{{\longrightarrow\hspace{-0.45cm}\not\hspace{0.45cm}}}X_{\tau_n},~\tau_n<\infty,~\{X_s\}_{s<\tau_n}\Subset S\Big)=0,
\]
so $\{\tau_n<\xi\}=\{X_{\tau_n}\in K\}$ $\Pbf$-almost surely.
Thanks to Lemma \ref{lmUQC} applied to $\Kc:=K$ and $\Uc:=U^2\cup(S\backslash K)^2$
\[
\sup_{n\in\N}\Pbf\big(X_{\tau_n}\in K,~\sigma_{n+1}<\tau_n+\eps\big)\cv{\eps\to0}0.
\]
For $\eps>0$,
\begin{multline*}
\Pbf\big(\xi<\infty,~\{X_s\}_{s<\xi}\not\Subset S\text{ and }\forall t<\xi,\exists s\in[t,\xi),~X_s\in K\big)\\
\leq \Pbf\big(\exists n,\forall m\geq n,~\tau_m<\xi<\tau_m+\eps\big)
\leq \sup_{n\in\N}\Pbf\big(\tau_n<\xi<\tau_n+\eps\big)\\
\leq \sup_{n\in\N}\Pbf\big([X_{\tau_n}\in K,~\sigma_{n+1}<\tau_n+\eps\big),
\end{multline*}
so letting $\eps\to0$ we obtain 
\begin{equation}\label{prepaKcompact}
\Pbf\big(\xi<\infty,~\{X_s\}_{s<\xi}\not\Subset S\text{ and }\forall t<\xi,\exists s\in[t,\xi),~X_s\in K\big)=0.
\end{equation}
Letting $K$ growing toward $S$, we deduce from \eqref{prepaKcompact} that $\Pbf\big(\Dloc(S)\cap\D(S^\Delta)\big)=1$.\\
\emph{Step 3.}
Let $\tau_n,\tau$ be $(\Fc_{t+})$-stopping times. By the first step $X_{\tau_n}\cv{n\to\infty} X_\tau$ $\Pbf$-almost surely on \[\big\{\tau_n\cv{n\to\infty}\tau<\infty,~\{X_s\}_{s<\tau}\Subset S\big\},\] 
by the second step this is also the case on \[\big\{\tau_n\cv{n\to\infty}\tau=\xi<\infty,~\{X_s\}_{s<\tau}\not\Subset S\big\},\] and this is clearly true on $\big\{\tau_n\cv{n\to\infty}\tau>\xi\big\}$, so the proof 
is done.
\end{proof}
\begin{proof}[Proof of part \ref{it1PropLMPP} of Proposition \ref{propLMPP}]
Take $(f,g)\in L$ and an open subset $U\Subset S$. If  $s_1\leq\cdots \leq s_k\leq s\leq t$ are positive numbers  and  $f_1,\ldots ,f_k\in\rmC(S^\Delta)$, we need to prove that
\begin{align}\label{eqPropTCMP}
h\mycdot\Ebf\left[\left(f(X_{t\wedge\tau^U})-f(X_{s\wedge\tau^U})-\int_{s\wedge\tau^U}^{t\wedge\tau^U}(hg)(X_u)\d u\right)f_1(X_{s_1})\cdots f_k(X_{s_k})\right]=0.
\end{align}
We will proceed in two steps: firstly we suppose that $U\Subset\{h\not = 0\}$. Recalling the definition \eqref{taug}, if we denote  $\tau_t:=\tau^h_t\wedge\tau^U$, then we have, for all $t\in\R_+$, 
\begin{align}\label{tauh1}
h\mycdot X_{t\wedge\tau^U(h\mycdot X)}=X_{\tau_t},
\end{align}
\begin{align}\label{tauh2}
\int_0^{t\wedge\tau^U(h\mycdot X)}(hg)(h\mycdot X_u)\d u=\int_0^{t\wedge\tau^U(h\mycdot X)}(hg)(X_{\tau_u})\d u=\int_0^{\tau_t}g(X_u)\d u.
\end{align}
Hence by \eqref{tauh1}-\eqref{tauh2} and optional sampling Theorem \ref{thmOptSam}
\begin{multline*}
h\mycdot\Ebf\left[\left(f(X_{t\wedge\tau^U})-f(X_{s\wedge\tau^U})-\int_{s\wedge\tau^U}^{t\wedge\tau^U}(hg)(X_u)
\d u\right)f_1(X_{s_1})\cdots f_k(X_{s_k})\right]\\
=h\mycdot\Ebf\left[\left(f(X_{t\wedge\tau^U})-f(X_{s\wedge\tau^U})-\int_{s\wedge\tau^U}^{t\wedge\tau^U}(hg)(X_u)
\d u\right)f_1(X_{s_1\wedge\tau^U})\cdots f_k(X_{s_k\wedge\tau^U})\right]\\
=\Ebf\left[\left(f(X_{\tau_t})-f(X_{\tau_s})-\int_{\tau_s}^{\tau_t}g(X_u)\d u\right)f_1(X_{\tau_{s_1}})\cdots f_k(X_{\tau_{s_k}})\right]=0.
\end{multline*}
Secondly, we suppose only that $U\Subset S$. Let $d$ be a metric on $S$ and we introduce, for $n\geq 1$ integer, $U_n:=\big\{a\in U\,|\, d(a,\{h=0\})>n^{-1}\big\}$. Then it is straightforward to obtain the pointwise convergences
\begin{eqnarray*}
&h\mycdot X_{t\wedge\tau^{U_n}(h\mycdot X)}\cv{n\to\infty}h\mycdot X_{t\wedge\tau^U(h\mycdot X)},\\
&\int_0^{t\wedge\tau^{U_n}(h\mycdot X)}(hg)(h\mycdot X_u)\d u\cv{n\to\infty}\int_0^{t\wedge\tau^U(h\mycdot X)}(hg)(h\mycdot X_u)\d u,
\end{eqnarray*}
so
\begin{align*}
&f(X_{t\wedge\tau^{U_n}})-f(X_{s\wedge\tau^{U_n}})-\int_{s\wedge\tau^{U_n}}^{t\wedge\tau^{U_n}}(hg)(X_u)\d u\\
&\hspace{3cm}\cv[h\mycdot \Pbf \text{-a.s.}]{n\to\infty}~f(X_{t\wedge\tau^U})-f(X_{s\wedge\tau^U})-\int_{s\wedge\tau^U}^{t\wedge\tau^U}(hg)(X_u)\d u.
\end{align*}
Applying the first step to $U_n$ and letting $n\to\infty$, by dominated convergence we obtain \eqref{eqPropTCMP}.
\end{proof}
\begin{proof}[Proof of part \ref{it1PropUSCLMP} of Proposition \ref{propUSCLMP}]
By using Proposition \ref{propEqLocGlo} we know that there exists $h\in\rmC(S,\R_+^*)$ such that $\Dloc(S)\cap\D(S^\Delta)$ has probability $1$ under $h\mycdot\Pbf^n$ and under $h\mycdot\Pbf$ and such that $h\mycdot\Pbf^n$ converges weakly to $h\mycdot\Pbf$ for the global Skorokhod topology from $\D(S^\Delta)$. Let us fix  $(f,g)$ 
and  $(f_n,g_n)$ arbitrary as in \eqref{cvgenerateur} and then we can modify $h$ 
such that it satisfies furthermore $hg_n,hg\in\rmC_0(S)$ and $hg_n\cv[\rmC_0]{n\to\infty}hg$.

Let $\Tbb$ be the set of $t\in \R_+$ such that $h\mycdot\Pbf(X_{t-}=X_t)=1$, so $\R_+\backslash\Tbb$ is countable. Let $s_1\leq \cdots\leq s_k\leq s\leq t$ belonging to $\Tbb$ and let $\varphi_1,\ldots,\varphi_k\in\rmC(S^\Delta)$ be. 
By using \ref{it1PropLMPP} of Proposition \ref{propLMPP} and the first part of Remark \ref{rkBLMP}
\begin{align}\label{eqLmCP}
h\mycdot \Ebf^n\left[\left(f_n(X_t)-f_n(X_s)-\int_s^t(hg_n)(X_u)\d u\right)\varphi_1(X_{s_1})\cdots \varphi_k(X_{s_k})\right]=0.
\end{align}
The sequence of functions $\big(f_n(X_t)-f_n(X_s)-\int_s^t(hg_n)(X_u)\d u\big)\varphi_1(X_{s_1})\cdots \varphi_k(X_{s_k})$ converges uniformly to the function $\big(f(X_t)-f(X_s)-\int_s^t(hg)(X_u)\d u\big)\varphi_1(X_{s_1})\cdots \varphi_k(X_{s_k})$ which is continuous $h\mycdot\Pbf$-almost everywhere for the topology of $\D(S^\Delta)$. Hence we can take the limit, as $n\to\infty$, in \eqref{eqLmCP} and we obtain that
\begin{align}\label{eqLmCP2}
h\mycdot\Ebf\left[\left(f(X_t)-f(X_s)-\int_s^t(hg)(X_u)\d u\right)\varphi_1(X_{s_1})\cdots \varphi_k(X_{s_k})\right]=0.
\end{align}
Since $\Tbb$ is dense in $\R_+$, by right continuity of paths of the canonical process, and 
by dominated convergence \eqref{eqLmCP2} extends to $s_i,s,t\in\R_+$. Hence $h\mycdot\Pbf\in\Mc(\{(f,hg)\})$, so using \eqref{eqProp2TC} and part \ref{it1PropLMPP} of Proposition \ref{propLMPP}, $\Pbf=(1/h)\mycdot h\mycdot\Pbf\in\Mc(\{(f,g)\})$. Since $(f,g)\in L$ was chosen arbitrary, we have proved that $\Pbf\in\Mc(L)$.
\end{proof}
\begin{proof}[Proof of part \ref{it2PropLMPP} of Proposition \ref{propLMPP}]
It is straightforward that $\Mc(\vect(L))=\Mc(L)$. Let $\Pbf\in\Mc(L)$. We apply 
the part \ref{it1PropUSCLMP} of Proposition \ref{propUSCLMP} to the stationary 
sequences $\Pbf^n=\Pbf$ and  $L_n=\vect(L)$ and to $\overline{\vect(L)}$. 
Hence $\Pbf\in\Mc(\overline{\vect(L)})$ and the proof is finished.
\end{proof}
%
\begin{proof}[Proof of part \ref{it2PropUSCLMP} of Proposition \ref{propUSCLMP}]
Take $t\in\R_+$ and an open subset $U\Subset S$, and let $d$ be a metric on $S^\Delta$. By Lemma \ref{lmUQC}, considering $\Kc:=\overline{U}$ and $\Uc:=\left\{(a,b)\in S^2\mid d(a,b)<\eps\right\}$, we have
\[
\sup_{\substack{\tau_1\leq\tau_2\\\tau_2\leq(\tau_1+\delta)\wedge\tau^U\wedge t}}\Pbf_n\big(d(X_{\tau_1},X_{\tau_2})\geq \eps\big)
\cv{\substack{n\to\infty\\\delta\to 0}}0,
\]
hence \eqref{hypropAldous} is satisfied and we can apply the Aldous criterion (see also Proposition 2.14 in \cite{GH017}).
\end{proof}
\begin{proof}[Proof of part \ref{it3PropLMPP} of Proposition \ref{propLMPP}]
It is straightforward that $\Mc(L)$ is convex. To prove the compacteness, let $(\Pbf^n)_n$ be a sequence from $\Mc(L)$. We apply the part \ref{it2PropUSCLMP} of Proposition \ref{propUSCLMP} 
to this sequence and  to the stationary sequence  $L_n=L$. Hence $(\Pbf^n)$ is tight, so there exists a subsequence $(\Pbf^{n_k})_k$ which converges toward some $\Pbf\in\Pcal(\Dloc(S))$. Thanks 
to the part \ref{it1PropUSCLMP} of Proposition \ref{propUSCLMP} we can deduce that 
$\Pbf\in\Mc(L)$. The statement of the proposition is then obtained since $\Pcal(\Dloc(S))$ is a Polish space.
\end{proof}

\subsection{Proof of Theorem \ref{thmDefLFF}}\label{secProofslocallyFeller}
To prove the theorem we will use three preliminary results. 
\begin{lemma}\label{lmMpCimSM}
Let $(\Pbf_a)_a\in\Pcal(\Dloc(S))^S$ be such that $a\mapsto\Pbf_a$ is continuous for the local Skorokhod topology. Suppose that for all $a\in S^\Delta$: $\Pbf_a(X_0=a)=1$ and there exists a dense subset $\Tbb_a\subset\R_+$ such that for any $B\in\Fc$ and $t_0\in\Tbb_a$
\[
\Pbf_a\left((X_{t_0+t})_t\in B\mid\Fc_{t_0}\right)=\Pbf_{X_{t_0}}(B)\quad\Pbf_a\text{-almost surely}.
\]
Then $(\Pbf_a)_a$ is a $(\Fc_{t+})_t$-strong Markov family.
\end{lemma}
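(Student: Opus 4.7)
The strategy is to proceed in three stages: (i) strengthen the given Markov property at times $t_0 \in \Tbb_a$ (with respect to $\Fc_{t_0}$) to the Markov property at every $t_0 \in \R_+$ with respect to the right-continuous filtration $\Fc_{t_0+}$; (ii) deduce the strong Markov property for $(\Fc_{t+})_t$-stopping times taking countably many values; (iii) approximate an arbitrary $(\Fc_{t+})_t$-stopping time from above by such discrete stopping times. Throughout, test against a bounded continuous function $\varphi\colon \Dloc(S)\to\R$, since such functions separate measures on $\Dloc(S)$.

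First I would fix $a\in S$, $t_0\geq 0$, $F\in\Fc_{t_0+}$, and pick a sequence $t_n\in\Tbb_a$ with $t_n\downarrow t_0$. Since $\Fc_{t_0+}\subset\Fc_{t_n}$ and the hypothesis provides the ordinary Markov property at $t_n$, one has
\[
\Ebf_a\bigl[\varphi((X_{t_n+s})_s)\,\mathds{1}_F\bigr]
=\Ebf_a\bigl[\Ebf_{X_{t_n}}[\varphi(X)]\,\mathds{1}_F\bigr].
\]
By right-continuity of the paths, $X_{t_n}\to X_{t_0}$ in $S^\Delta$ and $(X_{t_n+s})_{s\geq 0}\to (X_{t_0+s})_{s\geq 0}$ in $\Dloc(S)$, $\Pbf_a$-almost surely. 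On $\{X_{t_0}\in S\}$, continuity of $b\mapsto\Pbf_b$ gives $\Ebf_{X_{t_n}}[\varphi(X)]\to\Ebf_{X_{t_0}}[\varphi(X)]$; on $\{X_{t_0}=\Delta\}$, i.e.~$\{t_0\geq\xi\}$, we have $X_{t_n}=\Delta$ eventually, hence both sides equal $\Ebf_\Delta[\varphi(X)]$ for $n$ large enough. Dominated convergence yields
\[
\Ebf_a\bigl[\varphi((X_{t_0+s})_s)\,\mathds{1}_F\bigr]
=\Ebf_a\bigl[\Ebf_{X_{t_0}}[\varphi(X)]\,\mathds{1}_F\bigr],
\]
and since $\Ebf_{X_{t_0}}[\varphi(X)]$ is a pointwise limit of $\Fc_{t_n}$-measurable random variables it is $\bigcap_n\Fc_{t_n}$-measurable, hence $\Fc_{t_0+}$-measurable. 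This establishes the Markov property at $t_0$ relative to $\Fc_{t_0+}$.

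Next, given a $(\Fc_{t+})_t$-stopping time $\tau$, set $\tau_k:=2^{-k}(\lfloor 2^k\tau\rfloor+1)$ on $\{\tau<\infty\}$ and $\tau_k:=+\infty$ on $\{\tau=\infty\}$. Each $\tau_k$ is an $(\Fc_{t+})_t$-stopping time with countable range, and $\tau_k\downarrow\tau$. Decomposing $\{\tau_k<\infty\}$ along the countably many values $s$ of $\tau_k$ and applying step~(i) on each event $\{\tau_k=s\}\in\Fc_{s}\subset\Fc_{s+}$ gives, for any $F\in\Fc_{\tau_k+}$,
\[
\Ebf_a\bigl[\varphi((X_{\tau_k+s})_s)\,\mathds{1}_F\bigr]
=\Ebf_a\bigl[\Ebf_{X_{\tau_k}}[\varphi(X)]\,\mathds{1}_F\bigr],
\]
which is the strong Markov property at $\tau_k$. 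Since $\Fc_{\tau+}\subset\Fc_{\tau_k+}$, the same identity holds for $F\in\Fc_{\tau+}$, and I would pass to the limit $k\to\infty$ exactly as in step~(i): right-continuity of paths yields $X_{\tau_k}\to X_\tau$ in $S^\Delta$ and convergence of the shifted paths in $\Dloc(S)$, while the continuity of $b\mapsto\Pbf_b$ takes care of $\Ebf_{X_{\tau_k}}[\varphi(X)]\to\Ebf_{X_\tau}[\varphi(X)]$. Dominated convergence then gives the strong Markov property at $\tau$.

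The main obstacle is the handling of the cemetery point, both at the stopping time and during the approximation. One must split according to whether $X_{t_0}\in S$ (or $X_\tau\in S$) or equals $\Delta$, and in the latter case notice that, thanks to the convention $X_s=\Delta$ for $s\geq\xi$, the approximating points $X_{t_n}$ (resp.~$X_{\tau_k}$) are eventually $\Delta$, so convergence of $\Ebf_{X_{t_n}}[\varphi(X)]$ to $\Ebf_\Delta[\varphi(X)]$ is automatic and no continuity of $b\mapsto\Pbf_b$ at $\Delta$ is needed. A minor technical point, easy to verify, is that for a fixed cadlag path $x\in\Dloc(S)$ the map $t\mapsto (x_{t+s})_s$ is right-continuous from $\R_+$ into $\Dloc(S)$, which is precisely what legitimises passing $\varphi$ inside the limit.
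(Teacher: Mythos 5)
Your proposal is correct and follows essentially the same route as the paper: approximate the stopping time from the right by countably-valued stopping times, use the hypothesis at those times, and pass to the limit via right-continuity of the shifted paths (in $\Dloc(S)$), continuity of $b\mapsto\Pbf_b$, and dominated convergence. The only differences are organisational: the paper discretises $\tau$ directly inside the dense set $\Tbb_a$ and compares conditional expectations given the decreasing $\sigma$-fields $\Fc_{\tau_n}$ via a backward martingale convergence theorem, whereas you first upgrade the hypothesis to all deterministic times with respect to $(\Fc_{t+})_t$ and then test dyadic approximants against a fixed $F\in\Fc_{\tau+}$, which sidesteps that convergence theorem.
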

\begin{proof}
Let $\tau$ be a $(\Fc_{t+})_t$-stopping time, let $a\in S$ be and let $F$ be a bounded continuous function from $\Dloc(S)$ to $\R$. 
For each $n\in\N^*$ chose a discrete subspace $\Tbb_a^n\subset\Tbb_a$ such that $(t,t+n^{-1}]\cap\Tbb_a^n$ is not empty for any $t\in\R_+^*$, and define
\[
\tau_n:=\min\left\{t\in\Tbb_a^n\mid \tau<t\right\}.
\]
Hence $\tau_n$ is a $(\Fc_t)_t$-stopping time with value in $\Tbb_a^n$ , so
\[
\Ebf_a\left[F\left((X_{\tau_n+t})_t\right)\mid\Fc_{\tau_n}\right]=\Ebf_{X_{\tau_n}}F\quad\Pbf_a\text{-almost surely.}
\]
Since $\tau<\tau_n\leq\tau+n^{-1}$ on $\{\tau<\infty\}$ and $a\mapsto\Pbf_a$ is continuous, $\lim_{n\to\infty}\Ebf_{X_{\tau_n}}F=\Ebf_{X_\tau}F$. We have
\begin{align}
\nonumber&\Ebf_a\left|\Ebf_a\left[F\left((X_{\tau+t})_t\right)\mid\Fc_{\tau+}\right]-\Ebf_a\left[F\left((X_{\tau_n+t})_t\right)\mid\Fc_{\tau_n}\right]\right|\\
\label{eqPrLmMpCimSM}&\hspace{3cm}\leq\Ebf_a\left|\Ebf_a\left[F\left((X_{\tau+t})_t\right)\mid\Fc_{\tau+}\right]-\Ebf_a\left[F\left((X_{\tau+t})_t\right)\mid\Fc_{\tau_n}\right]\right|\\
\nonumber&\hspace{3cm}\quad+\Ebf_a\left|F\left((X_{\tau+t})_t\right)-F\left((X_{\tau_n+t})_t\right)\right|.
\end{align}
On the right hand side, the first term converges to $0$ (see, for instance, Theorem 7.23, p. 132 in \cite{Ka02}) and the second term converges to $0$ by dominated convergence. Hence
\[
\Ebf_a\left[F\left((X_{\tau+t})_t\right)\mid\Fc_{\tau+}\right]=\Ebf_{X_{\tau}}F\quad\Pbf_a\text{-almost surely,}
\]
so $(\Pbf_a)_a$ is a $(\Fc_{t+})_t$-strong Markov family.
\end{proof}
\begin{lemma}[Localisation of continuity]\label{lemLocCont}
Set $\widetilde{S}$ an arbitrary metrisable topological space, consider $U_n\subset S$, an increasing sequence of open subsets such that $S=\bigcup_nU_n$. Let $(\Pbf_a^n)_{a,n}\in\Pcal(\Dloc(S))^{\widetilde{S}\times \N}$ be such that 
\begin{enumerate}
\item\label{itLemLocCont1} for each $n\in\N$, $a\mapsto\Pbf_a^n$ is weakly continuous for the local Skorokhod topology, 
\item\label{itLemLocCont2} for each $n\leq m$ and $a\in \widetilde{S}$
\begin{align}\label{eq1LocCont}
\law_{\Pbf^m_a}\left(X^{\tau^{U_n}}\right) =\law_{\Pbf^n_a}\left(X^{\tau^{U_n}}\right).
\end{align}
\end{enumerate}
Then there exists a unique family $(\Pbf_a^\infty)_{a}\in\Pcal(\Dloc(S))^{\widetilde{S}}$ such that for any $n\in\N$ and $a\in \widetilde{S}$
\begin{align}\label{eq2LocCont}
\law_{\Pbf^\infty_a}\left(X^{\tau^{U_n}}\right) =\law_{\Pbf^n_a}\left(X^{\tau^{U_n}}\right).
\end{align}
Furthermore the mapping
\begin{align}\label{eq3LocCont}\begin{array}{ccc}
\N\cup\{\infty\}\times \widetilde{S}&\to&\Pcal(\Dloc(S))\\
(n,a)&\mapsto&\Pbf_a^n
\end{array}\end{align}
is weakly continuous for the local Skorokhod topology.
\end{lemma}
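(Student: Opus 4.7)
The plan is to handle the three assertions of the lemma---uniqueness of $\Pbf_a^\infty$, existence for each fixed $a$, and joint continuity of the combined map---in order, all driven by the exhaustion $S = \bigcup_n U_n$ combined with the compatibility \eqref{eq1LocCont}. For uniqueness, the pivotal observation is the purely pathwise fact that for every $x \in \Dloc(S)$, the stopped trajectory $x^{\tau^{U_n}(x)}$ converges to $x$ in the local Skorokhod topology as $n \to \infty$: given any $t$ with $\{x_s\}_{s<t} \Subset S$, this compact set sits inside some $U_{n_0}$, so $\tau^{U_n}(x) \geq t$ for $n \geq n_0$ and the stopped path agrees with $x$ on $[0,t]$ (which is precisely local Skorokhod convergence with identity time-change). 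Consequently any candidate $\Pbf_a^\infty$ satisfying \eqref{eq2LocCont} is the weak limit of the measures $\law_{\Pbf_a^n}(X^{\tau^{U_n}})$ and so is uniquely determined by the data $(\Pbf_a^n)_n$.

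For existence, I would fix $a$ and verify the Aldous criterion \eqref{hypropAldous} for the sequence $(\Pbf_a^n)_n$. Given an open $U \Subset S$, pick $n_0$ with $U \subset U_{n_0}$; then \eqref{eq1LocCont} yields $\law_{\Pbf_a^n}(X^{\tau^{U_{n_0}}}) = \law_{\Pbf_a^{n_0}}(X^{\tau^{U_{n_0}}})$ for every $n \geq n_0$. Since the Aldous event involves only stopping times $\tau_1 \leq \tau_2 \leq \tau^U \leq \tau^{U_{n_0}}$ and the values $X_{\tau_i} = X_{\tau_i \wedge \tau^{U_{n_0}}}$, it is a Borel functional of $X^{\tau^{U_{n_0}}}$, so its probability is constant in $n \geq n_0$ and tends to $0$ in $\delta$ via the single measure $\Pbf_a^{n_0}$; the finitely many indices $n < n_0$ add only individually tight terms. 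Extracting a subsequential weak limit $\Pbf_a^\infty$ and passing to the limit in $\law_{\Pbf_a^n}(X^{\tau^{U_k}}) = \law_{\Pbf_a^k}(X^{\tau^{U_k}})$ for $n \geq k$ yields \eqref{eq2LocCont}, and the uniqueness step then forces the full sequence to converge to $\Pbf_a^\infty$.

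For joint continuity, the case $n_k \to n_\infty < \infty$ is immediate from hypothesis \ref{itLemLocCont1}, since eventually $n_k = n_\infty$. For $(n_k, a_k) \to (\infty, a)$ the same Aldous argument delivers tightness of $(\Pbf_{a_k}^{n_k})_k$: for $U \subset U_{n_0}$ and $n_k \geq n_0$ the law of $X^{\tau^U}$ under $\Pbf_{a_k}^{n_k}$ equals its law under $\Pbf_{a_k}^{n_0}$, and the latter converges weakly by the continuity of $b \mapsto \Pbf_b^{n_0}$. Any weak subsequential limit $\Pbf'$ then satisfies $\law_{\Pbf'}(X^{\tau^{U_k}}) = \law_{\Pbf_a^k}(X^{\tau^{U_k}})$ for all $k$, hence $\Pbf' = \Pbf_a^\infty$ by uniqueness. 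The main obstacle throughout is that the stopping map $x \mapsto x^{\tau^U(x)}$ is not continuous everywhere on $\Dloc(S)$, so each transfer of weak convergence to stopped laws requires justification; I would resolve this by invoking Proposition \ref{propEqLocGlo} to pick $g \in \rmC(S, \R_+)$ with $\{g \neq 0\} = U_{n_0}$, making $a \mapsto g \mycdot \Pbf_a^{n_0}$ weakly continuous for the global Skorokhod topology on $\D(S^\Delta)$, performing the relevant passage to the limit there, and returning via the inverse time-change by $1/g$.
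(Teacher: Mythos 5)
Your overall route is the paper's: uniqueness from the pointwise convergence of $X^{\tau^{U_n}}$ to $X$ in the local Skorokhod topology, tightness obtained by transferring a uniform modulus through the compatibility relation \eqref{eq1LocCont}, and identification of subsequential limits by passing through a time change $g$ with $\{g\neq 0\}=U_{n_0}$ via Proposition \ref{propEqLocGlo} (the paper does exactly this, since the stopping map is not continuous). However, your tightness step contains a genuine gap. You invoke the Aldous condition \eqref{hypropAldous} and claim that, for $n\geq n_0$, the relevant supremum ``tends to $0$ in $\delta$ via the single measure $\Pbf_a^{n_0}$'', with the finitely many indices $n<n_0$ ``adding only individually tight terms'', and in the joint-continuity step that the bound for $\{\Pbf^{n_0}_{a_k}\}_k$ follows from its weak convergence. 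This inference is false: \eqref{hypropAldous} is a sufficient condition for tightness, not a necessary one, so neither a single measure, nor a finite family, nor a weakly convergent (hence tight) family need satisfy it. Concretely, if $\Pbf$ is the Dirac mass on a fixed path with a jump of size $\geq\eps$ at time $t_0<\tau^U$, the deterministic stopping times $\tau_1=t_0-\delta/2\leq\tau_2=t_0$ give $\Pbf\big(d(X_{\tau_1},X_{\tau_2})\geq\eps\big)=1$ for all small $\delta$, so the supremum in \eqref{hypropAldous} does not vanish as $\delta\to0$ even though $\{\Pbf\}$ is trivially tight. Since the lemma assumes no quasi-continuity of the families $(\Pbf^n_a)_a$, you cannot rule this behaviour out, and the Aldous bound you need for the reference measures is simply not available.

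The repair is exactly what the paper does: replace \eqref{hypropAldous} by the refined criterion \eqref{part3thmTension} (Theorem 2.15 of \cite{GH017}), whose three-stopping-time quantity $R$ characterises tightness in both directions. The necessity direction, applied to the family $\{\Pbf^n_a\mid a\in K,\ 0\leq n\leq N\}$ (tight by hypothesis \ref{itLemLocCont1}), yields the uniform small-$\delta$ bound on $\sup\Pbf^n_a(R\geq\eps)$ for stopping times below $t\wedge\tau^U$; since for $U\subset U_N$ this quantity is a functional of $X^{\tau^{U}}$, the compatibility \eqref{eq1LocCont} propagates the bound to all $n\geq N$, and the sufficiency direction then gives tightness of $\{\Pbf^n_a\mid a\in K,\ n\in\N\}$. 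With that substitution the remainder of your argument (subsequential limits, identification via $g\mycdot{}$ and recovery of $\law(X^{\tau^{U_k}})$ for $U_k\Subset\{g\neq0\}$, and the reduction of joint continuity to the case $n_k\to\infty$) matches the paper's proof.
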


Before giving the proof of this lemma let us recall that in  Theorem 2.15 of \cite{GH017} 
is obtained an improvement of the Aldous criterion of tightness. 
More precisely a subset $\Pcal\subset\Pcal\left(\Dloc(S)\right)$ is tight if and 
only if 
\begin{equation}\label{part3thmTension}
\forall t\geq 0,~\forall\eps>0,~\forall\text{ open } U\Subset S,\quad\sup_{\Pbf\in\Pcal}\sup_{\substack{\tau_1\leq\tau_2\leq\tau_3\\ \tau_3\leq (\tau_1+\delta)\wedge t\wedge\tau^U}}\Pbf(R\geq\eps)\cv{\delta\to 0}0,
\end{equation}
where the supremum is taken along  $\tau_i$ stopping times and with
\[
R:=\left\{\begin{array}{ll}
d(X_{\tau_1},X_{\tau_2})\wedge d(X_{\tau_2},X_{\tau_3})&\text{if }0<\tau_1<\tau_2,\\
d(X_{\tau_2-},X_{\tau_2})\wedge d(X_{\tau_2},X_{\tau_3})&\text{if }0<\tau_1=\tau_2,\\
d(X_{\tau_1},X_{\tau_2})&\text{if }0=\tau_1,
\end{array}\right.
\]
$d$ being an arbitrary  metric on $S^\Delta$.
\begin{proof}[Proof of Lemma \ref{lemLocCont}]
The uniqueness is straightforward using that $X^{\tau^{U_n}}$ converge to $X$ pointwise for the local Skorokhod topology as $n\to\infty$.

Let us prove that for any compact subset $K\subset \widetilde{S}$, the set $\left\{\Pbf_a^n\mid a\in K,~n\in\N\right\}$ is tight. If $U\Subset S$ is 
an arbitrary open subset,  there exists $N\in\N$ such that $U\subset U_N$.
Let $t.\eps>0$ be. By the continuity of $a\mapsto\Pbf_a^n$, the set $\left\{\Pbf_a^n\mid a\in K,~0\leq n\leq N\right\}$ is tight, so using the characterisation \eqref{part3thmTension} we have
\[
\sup_{\substack{0\leq n\leq N\\ a\in K}}\sup_{\substack{\tau_1\leq\tau_2\leq\tau_3\\ \tau_3\leq (\tau_1+\delta)\wedge t\wedge\tau^U}}\Pbf^n_a(R\geq\eps)\cv{\delta\to 0}0.
\]
Since $U\subset U_N$, for all $n\geq N$ and $a\in K$,
\[
\law_{\Pbf^N_a}\left(X^{\tau^{U}}\right) =\law_{\Pbf^n_a}\left(X^{\tau^{U}}\right),
\]
hence
\[
\sup_{n\in\N,~a\in K}\sup_{\substack{\tau_1\leq\tau_2\leq\tau_3\\ \tau_3\leq (\tau_1+\delta)\wedge t\wedge\tau^U}}\Pbf^n_a(R\geq\eps)
=\sup_{\substack{0\leq n\leq N\\ a\in K}}\sup_{\substack{\tau_1\leq\tau_2\leq\tau_3\\ \tau_3\leq (\tau_1+\delta)\wedge t\wedge\tau^U}}\Pbf^n_a(R\geq\eps)
\cv{\delta\to 0}0.
\]
So, again by \eqref{part3thmTension}, $\left\{\Pbf_a^n\mid a\in K,~n\in\N\right\}$ is tight.

Hence, if  $a\in\widetilde{S}$, then the set $\{\Pbf_a^n\}_{n}$ is tight. Fix such $a$, there exist an increasing sequence $\varphi(k)$ and a probability measure $\Pbf_a^\infty\in\Pcal(\Dloc(S))$ such that $\Pbf_a^{\varphi(k)}$ converges to $\Pbf_a^\infty$ as $k\to\infty$. Fix an arbitrary $n\in\N$, there exists $k_0\in\N$ such that $\varphi(k_0)\geq n$ and $U_n\Subset U_{\varphi(k_0)}$. Thanks to Proposition \ref{propEqLocGlo}, there exists $g\in\rmC(S,\R_+)$ such that $U_{\varphi(k_0)}=\{g\not =0\}$ and
such that $g\mycdot\Pbf_a^{n_k}$ converges to $g\mycdot\Pbf_a^\infty$ weakly for the local Skorokhod topology,
as $k\to\infty$.  By using \eqref{eq1LocCont} we have, for each $k\geq k_0$, $g\mycdot\Pbf_a^{\varphi(k)}=g\mycdot\Pbf_a^{\varphi(k_0)}$, so $g\mycdot\Pbf_a^\infty=g\mycdot\Pbf_a^{\varphi(k_0)}$. Hence we deduce
\[
\law_{\Pbf^\infty_a}\left(X^{\tau^{U_n}}\right) =\law_{\Pbf^{\varphi(k_0)}_a}\left(X^{\tau^{U_n}}\right)=\law_{\Pbf^n_a}\left(X^{\tau^{U_n}}\right).
\]

Let us prove that the mapping in \eqref{eq3LocCont} is weakly continuous for the local Skorokhod topology.
Since we already verified the tightness it suffices to prove that: for any sequences $n_k\in\N\cup\{\infty\}$, $a_k\in \widetilde{S}$ such that $n_k\to\infty$ and $a_k\to a\in \widetilde{S}$ as $k\to\infty$ and such that the sequence $\Pbf^{n_k}_{a_k}$ converges to $\Pbf\in\Pcal(\Dloc(S))$, then $\Pbf=\Pbf_a^\infty$.
Fix an arbitrary $N\in\N$, there exists $k_0\in\N$ such that $n_{k_0}\geq N$ and $U_N\Subset U_{n_{k_0}}$. As previously, by using Proposition \ref{propEqLocGlo} again, there exists $g\in\rmC(S,\R_+)$ such that $U_{n_{k_0}}=\{g\not =0\}$,  $g\mycdot\Pbf_{a_k}^{n_k}$ converges to $g\mycdot\Pbf$ and $g\mycdot\Pbf_{a_k}^{n_{k_0}}$ converges to $g\mycdot\Pbf_a^{n_{k_0}}$, as $k\to\infty$. Thanks to \eqref{eq2LocCont} $g\mycdot\Pbf_{a_k}^{n_k}=g\mycdot\Pbf_{a_k}^{n_{k_0}}$ for $k\geq k_0$, so $g\mycdot\Pbf=g\mycdot\Pbf_a^{n_{k_0}}=g\mycdot\Pbf_a^\infty$.
Hence we deduce
\[
\law_{\Pbf}\left(X^{\tau^{U_N}}\right) =\law_{\Pbf^\infty_a}\left(X^{\tau^{U_N}}\right),
\]
and letting $N\to\infty$ we deduce that $\Pbf=\Pbf_a^\infty$.
\end{proof}
\begin{lemma}[Continuity and Markov property]\label{lemContandM}
Let 
\[\begin{array}{ccc}
\N\cup\{\infty\}\times S^\Delta&\to&\Pcal(\Dloc(S))\\
(n,a)&\mapsto&\Pbf_a^n
\end{array}\]
be a weakly continuous mapping for the local Skorokhod topology such that $(\Pbf_a^n)_a$ is a Markov family for each $n\in\N$. Then $(\Pbf_a^\infty)_a$ is a Markov family.
\end{lemma}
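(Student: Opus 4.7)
The plan is to establish the Markov property for $(\Pbf^\infty_a)_a$ at a dense set of times and then invoke Lemma~\ref{lmMpCimSM} to upgrade it (in fact, to the $(\Fc_{t+})_t$-strong Markov property). First, since the evaluation $x\mapsto x_0$ is continuous on $\Dloc(S)$ (the increasing homeomorphisms of $\R_+$ appearing in the local Skorokhod convergence all fix $0$), weak continuity of $n\mapsto\Pbf^n_a$ forces $\Pbf^\infty_a(X_0=a)=1$, and Borel measurability of $a\mapsto\Pbf^\infty_a(B)$ follows from the continuity hypothesis. For each $a\in S^\Delta$, the set of continuity times
\[
\Tbb_a:=\{t>0\,:\,\Pbf^\infty_a(X_{t-}=X_t)=1\}
\]
has at most countable complement in $\R_+^*$, hence is dense.

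The key technical tool is the following uniform convergence: for each $F\in\rmC_b(\Dloc(S))$, setting $\Phi_n(b):=\Ebf^n_b[F]$, the convergence $\Phi_n\to\Phi_\infty$ is uniform on $S^\Delta$. Indeed, endowing $\N\cup\{\infty\}$ with its one-point compactification topology, the map $(n,b)\mapsto\Pbf^n_b$ is continuous on the compact space $(\N\cup\{\infty\})\times S^\Delta$ with values in the metrisable space $\Pcal(\Dloc(S))$, and composition with the continuous functional $\Pbf\mapsto\int F\d\Pbf$ gives a continuous real-valued function on this compact. A routine subsequence-and-contradiction argument exploiting the compactness of $S^\Delta$ then promotes the joint continuity plus pointwise convergence into uniform convergence on $S^\Delta$.

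Fix $a\in S^\Delta$, $t_0\in\Tbb_a$, times $s_1\leq\cdots\leq s_k\leq t_0$ in $\Tbb_a$, functions $\varphi_i\in\rmC(S^\Delta)$ and $F\in\rmC_b(\Dloc(S))$. Writing $H:=\varphi_1(X_{s_1})\cdots\varphi_k(X_{s_k})$, the Markov property of $(\Pbf^n_a)_a$ reads
\[
\Ebf^n_a\big[F((X_{t_0+t})_t)\,H\big]=\Ebf^n_a\big[\Phi_n(X_{t_0})\,H\big].
\]
The right-hand side differs from $\Ebf^n_a[\Phi_\infty(X_{t_0})H]$ by at most $\|\Phi_n-\Phi_\infty\|_\infty\|H\|_\infty\to 0$, and $\Phi_\infty(X_{t_0})H$ is bounded and, since $s_i,t_0\in\Tbb_a$, continuous in the path $\Pbf^\infty_a$-almost surely; the portmanteau theorem thus gives convergence to $\Ebf^\infty_a[\Phi_\infty(X_{t_0})H]$. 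The left-hand side converges similarly: the shift $\theta_{t_0}$ is continuous at paths continuous at $t_0$ (a standard Skorokhod fact, transferred to the local topology via Proposition~\ref{propEqLocGlo} and a cut-off), so $(F\circ\theta_{t_0})\cdot H$ is $\Pbf^\infty_a$-almost everywhere continuous. Passing to the limit yields
\[
\Ebf^\infty_a\big[F((X_{t_0+t})_t)\,H\big]=\Ebf^\infty_a\big[\Ebf^\infty_{X_{t_0}}[F]\,H\big].
\]

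A standard monotone class argument extends the identity: dominated convergence together with right-continuity of the paths removes the restriction $s_i\in\Tbb_a$ and allows $\varphi_i$ to be bounded Borel; a second monotone class step promotes $F$ from $\rmC_b(\Dloc(S))$ to the indicator of an arbitrary $B\in\Fc$. This produces the Markov identity
\[
\Pbf^\infty_a\big((X_{t_0+t})_t\in B\,\big\vert\,\Fc_{t_0}\big)=\Pbf^\infty_{X_{t_0}}(B)\quad\Pbf^\infty_a\text{-a.s.}
\]
for every $t_0\in\Tbb_a$ and every $B\in\Fc$, so Lemma~\ref{lmMpCimSM} applies and yields the $(\Fc_{t+})_t$-strong Markov property, in particular the Markov property of $(\Pbf^\infty_a)_a$. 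The main delicate point is that weak convergence does not commute with time evaluations at arbitrary times; this is overcome by restricting to the dense set of continuity times of the limit and by the crucial upgrade from pointwise to uniform convergence of $\Phi_n\to\Phi_\infty$ on the compact space $S^\Delta$, which is exactly where the joint continuity hypothesis is used.
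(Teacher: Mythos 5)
Your overall skeleton (Markov identity at a dense set of continuity times, uniform convergence of $b\mapsto\Ebf^n_b[F]$ on the compact $S^\Delta$, then Lemma~\ref{lmMpCimSM} to upgrade to the $(\Fc_{t+})_t$-strong Markov property) is the same as the paper's, but there is a genuine gap at the portmanteau step, and it is precisely the point the paper's detour through Proposition~\ref{propEqLocGlo} is designed to handle. You apply portmanteau directly on $\Dloc(S)$ with the local Skorokhod topology, claiming that $x\mapsto\Phi_\infty(x_{t_0})H(x)$ and $x\mapsto F((x_{t_0+s})_s)H(x)$ are $\Pbf^\infty_a$-a.s.\ continuous because $t_0,s_i\in\Tbb_a$. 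For the \emph{local} topology this is false on the event $\{\xi\leq t_0\}$: if $x$ explodes at or before $t_0$ (so $x_{t_0-}=x_{t_0}=\Delta$, and such $x$ do belong to $\{X_{t_0-}=X_{t_0}\}$), one can build $x^k\to x$ locally whose values after the explosion time of $x$ are arbitrary (e.g.\ paths that climb to level $k$ and return to a fixed point), so neither the evaluation at $t_0$ nor the shift by $t_0$ is continuous at $x$. Since nothing in the hypotheses prevents $\Pbf^\infty_a(\xi\leq t_0)>0$, the a.s.-continuity needed for portmanteau is not available, and both limits
\[
\Ebf^n_a\big[F((X_{t_0+s})_s)H\big]\longrightarrow\Ebf^\infty_a\big[F((X_{t_0+s})_s)H\big],\qquad
\Ebf^n_a\big[\Phi_\infty(X_{t_0})H\big]\longrightarrow\Ebf^\infty_a\big[\Phi_\infty(X_{t_0})H\big]
\]
are unjustified as stated.

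Your parenthetical ``transferred to the local topology via Proposition~\ref{propEqLocGlo} and a cut-off'' points at the fix but does not carry it out, and carrying it out is not cosmetic. The paper first chooses $g$ via Proposition~\ref{propEqLocGlo} so that all laws $g\mycdot\Pbf^n_a$ live on $\Dloc(S)\cap\D(S^\Delta)$ and $(n,a)\mapsto g\mycdot\Pbf^n_a$ is continuous for the \emph{global} topology on $\D(S^\Delta)$, where evaluations and shifts are indeed a.s.\ continuous on $\{X_{t-}=X_t\}$. But then the Markov identity one passes to the limit must hold for the time-changed laws $g\mycdot\Pbf^n_a$ at deterministic times, and a deterministic time for $g\mycdot X$ is a random time for $X$; this is why the paper first upgrades each $(\Pbf^n_a)_a$, $n\in\N$, to $(\Fc_{t+})_t$-strong Markov via Lemma~\ref{lmMpCimSM}, invokes the stability of the strong Markov property under time change \eqref{eqProp5TC}, and at the end undoes the time change with $1/g$ using \eqref{eqProp2TC}. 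Your proposal uses neither of these ingredients, so even after inserting the time change the argument would be incomplete. The uniform-convergence step and the density-of-continuity-times device are correct and match the paper; the missing idea is the reduction to the global topology together with the strong-Markov/time-change compatibility that makes that reduction legitimate.
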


Before giving the proof of the result recall the following property of the time change stated 
in the fifth part of Proposition 3.3 of \cite{GH017}: for any $(\Pbf_a)_a\in\Pcal(\Dloc(S))^S$ and $g\in\rmC(S,\R_+)$,
\begin{equation}\label{eqProp5TC}
(\Pbf_a)_a\text{ is $(\Fc_{t+})_t$-strong Markov }\Rightarrow(g\mycdot\Pbf_a)_a\text{ is $(\Fc_{t+})_t$-strong Markov }.
\end{equation}

\begin{proof}
Using Proposition \ref{propEqLocGlo}, there exists $g\in\rmC(S,\R_+^*)$ such that for all $(n,a)\in \N\cup\{\infty\}\times S^\Delta$, $\Pbf_a^n(\Dloc(S)\cap\D(S^\Delta))=1$ and such that $(n,a)\mapsto\Pbf_a^n$ is weakly continuous for the global Skorokhod topology from $\D(S^\Delta)$. For all $n\in\N$, by Lemma \ref{lmMpCimSM}, $(\Pbf_a^n)_a$ is $(\Fc_{t+})_t$-strong Markov, so, by \eqref{eqProp5TC}, $(g\mycdot\Pbf_a^n)_a$ is $(\Fc_{t+})_t$-strong Markov.

Take $a\in S$ and denote $\Tbb_a:=\big\{t\in\R_+\,\big| g\mycdot\Pbf^\infty_a(X_{t-}=X_t)=1\big\}$, so $\Tbb_a$ is dense in $\R_+$. Let $t\in\Tbb_a$ be and consider $F,G$ two bounded function from $\D(S^\Delta)$ to $\R$ continuous for the global Skorokhod topology, we want to prove that
\begin{align}\label{eq1proLemContandMark}
g\mycdot\Ebf_a^\infty\left[F\left((X_{t+s})_s\right)G\left((X_{t\wedge s})_s\right)\right]=g\mycdot\Ebf_a^\infty\Big[g\mycdot\Ebf_{X_t}^\infty[F]G\left((X_{t\wedge s})_s\right)\Big].
\end{align}
For any $n\in\N$, by the Markov property we have
\begin{align}\label{eq2proLemContandMark}
g\mycdot\Ebf_a^n\Big[F\left((X_{t+s})_s\right)G\left((X_{t\wedge s})_s\right)\Big]=g\mycdot\Ebf_a^n\Big[g\mycdot\Ebf_{X_t}^n[F]G\left((X_{t\wedge s})_s\right)\Big].
\end{align}
The mappings
\[\begin{array}{rcl}
\D(S^\Delta)&\to&\R\\
x&\mapsto&F\left((x_{t+s})_s\right)G\left((x_{t\wedge s})_s\right)
\end{array}\quad\text{ and }\quad\begin{array}{rcl}
\D(S^\Delta)&\to&\R\\
x&\mapsto&g\mycdot\Ebf_{x_t}^\infty[F]G\left((x_{t\wedge s})_s\right)
\end{array}\]
are continuous on the set $\{X_{t-}=X_t\}$ for the global topology. Hence, since $g\mycdot\Ebf_a^n$ converges to $g\mycdot\Ebf_a^\infty$ weakly for the global topology and $g\mycdot\Pbf^\infty_a(X_{t-}=X_t)=1$, we have
\begin{align}
\label{eq3proLemContandMark}&g\mycdot\Ebf_a^n\Big[F\left((X_{t+s})_s\right)G\left((X_{t\wedge s})_s\right)\Big]\cv{n\to\infty}g\mycdot\Ebf_a^\infty\Big[F\left((X_{t+s})_s\right)G\left((X_{t\wedge s})_s\right)\Big],\\
\label{eq4proLemContandMark}&g\mycdot\Ebf_a^n\Big[g\mycdot\Ebf_{X_t}^\infty[F]G\left((X_{t\wedge s})_s\right)\Big]\cv{n\to\infty}g\mycdot\Ebf_a^\infty\Big[g\mycdot\Ebf_{X_t}^\infty[F]G\left((X_{t\wedge s})_s\right)\Big].
\end{align}
Since $(n,b)\mapsto g\mycdot\Pbf_b^n$ is continuous for the global topology, using the compactness of $S^\Delta$ we have
\begin{align}\label{eq5proLemContandMark}
\sup_{a\in S^\Delta}\big|g\mycdot\Ebf_a^n F-g\mycdot\Ebf_a^\infty F\big|\cv{n\to\infty}0.
\end{align}
We deduce \eqref{eq1proLemContandMark} from 
\eqref{eq2proLemContandMark}-\eqref{eq5proLemContandMark}
and so
\[
g\mycdot\Ebf_a^\infty\big[F\left((X_{t+s})_s\right)\,\big|\,\Fc_t\big]=g\mycdot\Ebf_{X_t}^\infty[F],\quad g\mycdot\Pbf_a^\infty\text{-almost surely},
\]
so, by Lemma \ref{lmMpCimSM}, $(g\mycdot\Pbf_a^\infty)_a$ is $(\Fc_{t+})_t$-strong Markov. Applying \eqref{eqProp5TC} to $(g\mycdot\Pbf_a^\infty)_a$ and $1/g$, and using \eqref{eqProp2TC}, we deduce that  $(\Pbf_a^\infty)_a$ is $(\Fc_{t+})_t$-strong Markov.
\end{proof}
\begin{proof}[Proof of Theorem \ref{thmDefLFF}]\,

\emph{\ref{item1thmDefLFF}$\Rightarrow$\ref{item2thmDefLFF}}
Thanks to Proposition \ref{propEqLocGlo} there exists $g\in\rmC(S,\R_+^*)$ such that for all $a\in S^\Delta$, $\Pbf_a(\Dloc(S)\cap\D(S^\Delta))=1$ and such that the mapping $a\mapsto\Pbf_a$
is weakly continuous for the global Skorokhod topology from $\D(S^\Delta)$. Lemma \ref{lmMpCimSM} insure that $(\Pbf_a)_a$ is $(\Fc_{t+})_t$-strong Markov.  
By \eqref{eqProp5TC} we can deduce that $(g\mycdot\Pbf_a)_a$ is $(\Fc_{t+})_t$-strong Markov. Take $a\in S$ and $t\in\R_+^*$, we will prove that $g\mycdot\Pbf_a(X_{t-}=X_t)=1$. For any $f\in\rmC(S^\Delta)$, $s<t$ and $\eps>0$, by the Markov property
\[
g\mycdot\Ebf_a\Big[\frac{1}{\eps}\int_s^{s+\eps}f(X_u)\d u\,\Big|\,\Fc_s\Big]\overset{g\mycdot\Pbf_a\text{-a.s.}}{=}g\mycdot\Ebf_{X_s}\Big[\frac{1}{\eps}\int_0^{\eps}f(X_u)\d u\Big].
\]
Since $a\mapsto g\mycdot\Pbf_a$ is weakly continuous for the global topology and since $x\mapsto \frac{1}{\eps}\int_0^{\eps}f(X_u)\d u$ is continuous for the global topology,
\[
g\mycdot\Ebf_{X_s}\Big[\frac{1}{\eps}\int_0^{\eps}f(X_u)\d u\Big]\cv{\substack{s\to t\\s<t}}g\mycdot\Ebf_{X_{t-}}\Big[\frac{1}{\eps}\int_0^{\eps}f(X_u)\d u\Big].
\]
By a similar reasoning as in \eqref{eqPrLmMpCimSM} we have
\[
g\mycdot\Ebf_a\left|g\mycdot\Ebf_a\Big[\frac{1}{\eps}\int_t^{t+\eps}f(X_u)\d u\,\Big|\,\Fc_{t-}\Big]-g\mycdot\Ebf_a\Big[\frac{1}{\eps}\int_s^{s+\eps}f(X_u)\d u\,\Big|\,\Fc_s\Big]\right|\cv{\substack{s\to t\\s<t}}0
\]
so
\[
g\mycdot\Ebf_a\Big[\frac{1}{\eps}\int_t^{t+\eps}f(X_u)\d u\,\Big|\,\Fc_{t-}\Big]\overset{g\mycdot\Pbf_a\text{-a.s.}}{=}g\mycdot\Ebf_{X_{t-}}\Big[\frac{1}{\eps}\int_0^{\eps}f(X_u)\d u\Big].
\]
Hence letting $\eps\to 0$ we deduce $g\mycdot\Ebf_a\left[f(X_t)\mid\Fc_{t-}\right]\overset{g\mycdot\Pbf_a\text{-a.s.}}{=}f(X_{t-})$. Since $f$ is arbitrary, this is also true for $f^2$ so we deduce
\begin{align*}
g\mycdot\Ebf_a\left(f(X_t)-f(X_{t-})\right)^2&=g\mycdot\Ebf_a\left[g\mycdot\Ebf_a\left[f^2(X_t)\mid\Fc_{t-}\right]-f^2(X_{t-})\right]\\
&\quad-2g\mycdot\Ebf_a\left[f(X_{t-})\left(g\mycdot\Ebf_a\left[f(X_t)\mid\Fc_{t-}\right]-f(X_{t-})\right)\right]\\
&=0.
\end{align*}
Since $f$ is arbitrary, taking a dense sequence of $\rmC(S^\Delta)$, we get $g\mycdot\Pbf_a(X_{t-}=X_t)=1$. Finally, for any $t\in\R_+$ and $f\in\rmC(S^\Delta)$, since $x\mapsto f(x_t)$ is continuous for the global Skorokhod topology on $\{X_{t-}=X_t\}$, the function
\[\begin{array}{ccc}
S^\Delta&\to&\R\\
a&\mapsto&g\mycdot\Ebf_af(X_t)
\end{array}\]
is continuous, so $(g\mycdot\Pbf_a)_a$ is a Feller family.\\
\emph{\ref{item2thmDefLFF}$\Rightarrow$\ref{item3thmDefLFF}.}
Let $L$ be the $\rmC_0\times\rmC_0$-generator of $(g\mycdot\Pbf_a)_a$, then, by Proposition \ref{propUniMgPb}, $\Mc(L)=\{g\mycdot\Pbf_\mu\}_{\mu\in \Pcal(S^\Delta)}$ so by the first part of Proposition \ref{propLMPP} and by \eqref{eqProp2TC},
\[
\Mc\left(\frac{1}{g}L\right)=\{\Pbf_\mu\}_{\mu\in \Pcal(S^\Delta)}.
\]
\emph{\ref{item3thmDefLFF}$\Rightarrow$\ref{item1thmDefLFF}.} Thanks to \ref{it3PropLMPP} from Proposition \ref{propLMPP}, for the local Skorokhod topology,
\[\begin{array}{ccc}
\{\Pbf_a\}_{a\in S}&\to&S\\
\Pbf_a&\mapsto&a
\end{array}\]
is a continuous injective function defined on a compact set, so $a\mapsto\Pbf_a$ is also continuous. Let $\tau$ be a $(\Fc_{t+})_t$-stopping time and $a$ be in $S$. As in Remark \ref{rkExCL} we denote
\[
\Qbf_X\overset{\Pbf_a\text{-a.s.}}{:=}\law_{\Pbf_a}\left((X_{\tau+t})_{t\geq 0}\mid\Fc_{\tau+}\right).
\]
By using Proposition \ref{propCondLMP}, $\Qbf_X\in\Mc(L)$, $\Pbf_a$-almost surely, so $\Qbf_X=\Pbf_{X_\tau}$, $\Pbf_a$-almost surely, hence $(\Pbf_a)_a$ is $(\Fc_{t+})_t$-strong Markov. The quasi-continuity is a consequence of \ref{it4PropLMPP} from Proposition \ref{propLMPP}.\\
\emph{\ref{item2thmDefLFF}$\Rightarrow$\ref{item4thmDefLFF}.} Take an open subset $U\Subset S$ and define for all $a\in S$
\[
\widetilde{\Pbf}_a:= h\mycdot\Pbf_a\quad\text{ where }\quad h:=\frac{g\wedge\min_Ug}{\min_Ug}.
\]
By Proposition \ref{propTCFel}, $(\widetilde{\Pbf}_a)_a$ is Feller, and  moreover, since $X^{\tau^U}=(h\mycdot X)^{\tau^U}$,
\[
\forall a\in S,\quad\law_{\Pbf_a}\left(X^{\tau^U}\right) = \law_{\widetilde{\Pbf}_a}\left(X^{\tau^U}\right).
\]
\emph{\ref{item4thmDefLFF}$\Rightarrow$\ref{item1thmDefLFF}.} Let $U_n\Subset S$ be an increasing sequence of open subsets such that $S=\bigcup_nU_n$. For each $n\in\N$ there exists a Feller family $(\Pbf^n_a)_a$ such that
\[
\forall a\in S,\quad\law_{\Pbf_a}\left(X^{\tau^{U_n}}\right) = \law_{\Pbf^n_a}\left(X^{\tau^{U_n}}\right).
\]
Denote $\Pbf^\infty_a:=\Pbf_a$, then thanks to Lemma \ref{lemLocCont} the mapping
\[\begin{array}{ccc}
\N\cup\{\infty\}\times S^\Delta&\to&\Pcal(\Dloc(S))\\
(n,a)&\mapsto&\Pbf_a^n
\end{array}\]
is continuous and thanks to Lemma \ref{lemContandM} $(\Pbf^\infty_a)_a$ is a Markov family.
\end{proof}

\subsection{Proof of Lemma \ref{lmMPtoSMP}}\label{secProofLmMPtoSMP}

Before proving the Lemma \ref{lmMPtoSMP} let us note that
thanks to Proposition \ref{propEqLocGlo} and \eqref{eqProp5TC}, if $(\Pbf_a)_a\in\Pcal(\Dloc(S))^S$ is locally Feller then for any open subset $U\subset S$ there exists $h_0\in\rmC(S,\R_+)$ such that $U=\{h_0\not = 0\}$ and $(h_0\mycdot\Pbf_a)_a$ is locally Feller. This fact does not ensure that the martingale local problem associate to $h_0L$ is well-posed as is stated in lemma. During the proof we will use two preliminary results.
\begin{lemma}\label{lmExTCWP}
Let $L$ be a subset of $\rmC_0(S)\times\rmC(S)$ such that $\rmD(L)$ is dense in $\rmC_0(S)$ and $U$ be an open subset of $S$, then there exist a subset $L_0$ of $L$ and a function $h_0$ of $\rmC(S,\R_+)$ with $\{h_0\not =0\}=U$ such that $\overline{L}=\overline{L_0}$, such that $h_0L_0\subset\rmC_0(S)\times\rmC_0(S)$ and such that: for any $h\in\rmC(S,\R_+)$ with $\{h\neq 0\}=U$ and $\sup_{a\in U} (h/h_0)(a)<\infty$ and any $\Pbf\in\Mcons\left((hL_0)^\Delta\right)$, $\Pbf(X=X^{\tau^U})=1$.
\end{lemma}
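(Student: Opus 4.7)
The plan splits into two parts: (i) construct the countable collection $L_0\subset L$ and the weight $h_0$ by a direct procedure, and (ii) prove the path-freezing property by conditioning at $\tau^U$ and reducing to a trap property at points of $(S\setminus U)\cup\{\Delta\}$.

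For the first part, using separability of $\rmC_0(S)\times\rmC(S)$ and density of $\rmD(L)$ in $\rmC_0(S)$, extract a countable $L_0=\{(f_n,g_n)\}_{n\in\N}\subset L$ with $\overline{L_0}=\overline{L}$. To build $h_0$, fix an increasing exhaustion $K_n\Subset K_{n+1}\Subset U$ with $U=\bigcup_nK_n$, set $M_n:=1+\max_{k\leq n}\|g_k\|_{K_n}$, and via Urysohn's lemma on the locally compact space $S$ produce $h_0\in\rmC(S,[0,1])$ with $\{h_0\neq 0\}=U$ and $h_0\leq 2^{-n}/M_n$ on $U\setminus K_n$. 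By construction $h_0 g_n\in\rmC_0(S)$ for each $n$, hence $h_0L_0\subset\rmC_0(S)\times\rmC_0(S)$, while the identity $\overline{L_0}=\overline{L}$ is automatic.

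For the third property, given $h\in\rmC(S,\R_+)$ with $\{h\neq 0\}=U$ and $\sup_U(h/h_0)\leq C$, the identity $hg_n=(h/h_0)(h_0g_n)$ on $U$ together with $h_0g_n\in\rmC_0(S)$ vanishing on $\partial U$ gives $hg_n\in\rmC_0(S)$; thus $(hL_0)^\Delta\subset\rmC(S^\Delta)\times\rmC(S^\Delta)$ with dense domain. Taking $\Pbf\in\Mcons((hL_0)^\Delta)$, part~\ref{it4PropLMPP} of Proposition~\ref{propLMPP} applied on the compact space $S^\Delta$ yields $X_{\tau^U}\not\in U$ on $\{\tau^U<\infty\}$ almost surely. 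Conditioning at $\tau^U$ via the conservative analogue of Proposition~\ref{propCondLMP}, the post-$\tau^U$ process is $\Pbf$-a.s.\ an element of $\Mcons((hL_0)^\Delta)$ starting from $X_{\tau^U}$. It therefore suffices to prove a trap property: for every $a\in(S\setminus U)\cup\{\Delta\}$ and every $\Qbf\in\Mcons((hL_0)^\Delta)$ with $\Qbf(X_0=a)=1$, one has $\Qbf(X\equiv a)=1$.

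The trap property at $a\in S\setminus U$ will be proved using that $hg\equiv 0$ on $S\setminus U$ and the positive maximum principle inherited by $L_0$ from $L$ (cf.\ Theorem~\ref{thmExMP} and Remark~\ref{rqunivariate}). Concretely, for any neighbourhood $V$ of $a$ in $S^\Delta$ one picks $f\in\rmD(L_0)$ with $f(a)=\max f$ and $f\leq f(a)-\eps$ on $S^\Delta\setminus V$, applies Theorem~\ref{thmOptSam} to the martingale $f(X_t)-\int_0^t(hg)(X_s)\,ds$ at $\tau^V\wedge t$, and uses the continuity of $hg$ combined with $h(a)=0$ to bound $\Qbf(\tau^V\leq t)$ by $t\|hg\|_V/\eps$. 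Iterating via conditioning at $\tau^V$, then shrinking $V\downarrow\{a\}$ along a countable basis, forces $\Qbf(X_t=a)=1$ for all $t$. The case $a=\Delta$ is similar, using $(\mathds{1}_{S^\Delta},0)\in(hL_0)^\Delta$ and $f(\Delta)=0$ for $f\in\rmC_0(S)$. The main obstacle will be the trap estimate at points $a\in\partial U$: excursions of $X$ into $U$ are a priori possible on arbitrarily short time scales, and closing the iteration rigorously should require a Lemma~\ref{lmUQC}-type uniform continuity estimate along stopping times to control $\|hg\|_V$ as $V\downarrow\{a\}$.
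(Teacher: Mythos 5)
Your overall architecture matches the paper's: a countable $L_0\subset L$ with $\overline{L_0}=\overline{L}$, a weight $h_0$ with $\{h_0\neq0\}=U$ and $h_0L_0\subset\rmC_0(S)\times\rmC_0(S)$, then reduction by conditioning at $\tau^U$ (Proposition \ref{propCondLMP} plus part \ref{it4PropLMPP} of Proposition \ref{propLMPP}) to a trap property at points of $S^\Delta\setminus U$, proved with peaked test functions and optional sampling. The genuine gap is the one you flag yourself and do not close: in your trap estimate $\Qbf(\tau^V\le t)\le t\|hg\|_V/\eps$ the pair $(f,g)\in L_0$ must be adapted to $V$ (peaked inside $V$), so as $V\downarrow\{a\}$ the companion $g=g_V$ changes, and nothing bounds $\|hg_V\|_V$: continuity of $hg$ with $h(a)=0$ helps only for a fixed pair, while for an abstract $L$ the companions of sharper and sharper bumps can blow up arbitrarily fast (the same problem occurs at $a=\Delta$). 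A Lemma \ref{lmUQC}-type estimate cannot repair this, since that lemma controls oscillations of solutions given the operators, not the size of companions of test functions. Nor is the difficulty cosmetic: the conclusion is false for a generic $h_0$ with $h_0L_0\subset\rmC_0(S)\times\rmC_0(S)$. For instance with $S=\R$, $U=(0,\infty)$, $L=\{(f,f')\mid f\in\rmC_c^\infty(\R)\}$ and $h_0(x)=\sqrt{x^+}\wedge1$, the Dirac mass on the solution of $\dot x=h_0(x)$, $x_0=0$, which leaves $0$ immediately (equal to $t^2/4$ for small $t$) belongs to $\Mcons((h_0L_0)^\Delta)$ and violates $\Pbf(X=X^{\tau^U})=1$. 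So the whole content of the lemma is that $h_0$ must be built with a stronger property than the ones your construction secures; incidentally, your bound $h_0\le 2^{-n}/M_n$ on $U\setminus K_n$ with $M_n$ formed from $\|g_k\|_{K_n}$ does not even give $h_0g_k\in\rmC_0(S)$ as stated, because on $U\setminus K_n$ the function $g_k$ is not controlled by its norm on $K_n$.

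The missing idea (and the paper's route) is to co-design $h_0$ with a countable family of test pairs chosen in advance. For each $n$, cover $S^\Delta\setminus U$ by finitely many balls $B(a_{n,m},n^{-1})$ with $a_{n,m}\in S^\Delta\setminus U$, and use density of $\rmD(L_0)$ to pick $(f_{n,m},g_{n,m})\in L_0$ with $f_{n,m}$ within $n^{-1}$ of an indicator which vanishes on $B(a_{n,m},n^{-1})$ and equals $1$ off $B(a_{n,m},2n^{-1})$. Only then choose $h_0\in\rmC(S,\R_+)$ with $\{h_0\neq0\}=U$, $h_0g\in\rmC_0(S)$ for all $(f,g)\in L_0$, and additionally $\sup_{B(a_{n,m},4n^{-1})}h_0\,|g_{n,m}|\le n^{-1}$ for all $n,m$. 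Then for $h\le Ch_0$ and any solution of $(hL_0)^\Delta$ started at $a\in S^\Delta\setminus U$, applying optional sampling to $(f_{n,m},hg_{n,m})$ with $d(a,a_{n,m})<n^{-1}$ and the stopping time $\tau^{B(a,3n^{-1})}$ gives $\Pbf\big(\tau^{B(a,3n^{-1})}\le t\big)\le(2+tC)/n$, a bound which is uniform in $h$ and tends to $0$ as the ball shrinks; letting $n\to\infty$ yields $\Pbf(\forall s,\ X_s=a)=1$, and your conditioning step then finishes the proof exactly as in the paper. Without tying the decay of $h_0$ near $S^\Delta\setminus U$ to the specific companions $g_{n,m}$ at every scale, the iteration you describe cannot be closed.
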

\begin{proof}
Take $L_0$ a countable dense subset of $L$ and let $d$ be a metric on $S^\Delta$. For any $n\in\N^*$ there exist $M_n\in\N$ and $(a_{n,m})_{1\leq m\leq M_n}\in (S^\Delta\backslash U)^{M_n}$ such that
\[
S^\Delta\backslash U\subset\bigcup_{m=1}^{M_n}B(a_{n,m},n^{-1}).
\]
For each $1\leq m\leq M_n$ there exists $(f_{n,m},g_{n,m})\in L_0$ such that
\[
f_{n,m}(a)\in\left\{\begin{array}{ll}
[1-n^{-1},1+n^{-1}] & \text{if } d(a,a_{n,m})\geq 2n^{-1},\\
{[-n^{-1},1+n^{-1}]} & \text{if } n^{-1}\leq d(a,a_{n,m})\geq 2n^{-1},\\
{[-n^{-1},n^{-1}]} & \text{if } n^{-1}\leq d(a,a_{n,m}).
\end{array}\right.
\]
Take $h_0\in\rmC_0(S,\R_+)$ with $\{h_0\not =0\}=U$, such that $h_0g\in\rmC_0(S)$ for any $(f,g)\in L_0$ and such that for any $n\in\N^*$ and $1\leq m\leq M_n$
\[
\|h_0\|_{B(a_{n,m},4n^{-1})}\|g_{n,m}\|\leq \frac{1}{n}.
\]
Hence $\overline{L}=\overline{L_0}$ and $hL_0\subset\rmC_0(S)\times\rmC_0(S)$.
Let $h\in\rmC(S,\R_+)$ be such that $\{h\neq 0\}=U$ and $C:=\sup_{a\in U} (h/h_0)(a)<\infty$.
Let $\Pbf\in\Mcons\left((hL)^\Delta\right)$ be such that there exists $a\in S^\Delta\backslash U$ with $\Pbf(X_0=a)=1$. We will prove that
\begin{align}\label{eqLmExTCWP}
\Pbf(\forall s\geq 0,~X_s=a)=1.
\end{align}
Take $t\in\R_+$ and $n\in\N$. There exists $m\leq M_n$ such that $d(a,a_{n,m})<\frac{1}{n}$. If we denote
\[
\tau:=\tau^{B(a,3n^{-1})},
\]
then
\begin{align*}
\Ebf[f_{n,m}(X_{t\wedge\tau})]
&=f_{n,m}(a)+\Ebf\left[\int_0^{t\wedge\tau}h(X_s)g_{n,m}(X_s)\d s\right]\\
&\leq f_{n,m}(a)+t\|h\|_{B(a_{n,m},4n^{-1})}\|g_{n,m}\|
\leq \frac{1+tC}{n}
\end{align*}
Since by \ref{it4PropLMPP} from Proposition \ref{propLMPP} we have $\Pbf\big(\tau<\infty \Rightarrow d(X_\tau,a)\geq\frac{3}{n}\big)=1$,
\begin{align*}
\Ebf[f_{n,m}(X_{t\wedge\tau})]
&=\Ebf[f_{n,m}(X_{\tau})\mathds{1}_{\{\tau\leq t\}}]+\Ebf[f_{n,m}(X_{t})\mathds{1}_{\{t<\tau\}}]\\
&\geq (1-\frac{1}{n})\Pbf(\tau\leq t)-\frac{1}{n}\Pbf(t<\tau)
=\Pbf(\tau\leq t)-\frac{1}{n},
\end{align*}
so
\[
\Pbf(\tau\leq t)\leq \frac{2+tC}{n}.
\]
Hence we obtain
\[
\Pbf\big(\forall s\in[0,t],~d(X_s,a)\leq \frac{3}{n}\big)\geq\Pbf(t<\tau)\geq 1-\frac{2+tC}{n}.
\]
By taking the limit with respect to $n$ and $t$ we obtain \eqref{eqLmExTCWP}.

To complete the proof let us consider an arbitrary $\Pbf\in\Mcons\left((hL_0)^\Delta\right)$. As in Remark \ref{rkExCL} we denote 
\[
\Qbf_X\overset{\Pbf\text{-a.s.}}{:=}\law_\Pbf\left((X_{\tau^U+t})_{t\geq 0}\mid\Fc_{\tau^U}\right). 
\]
Thanks to Proposition \ref{propCondLMP} $\Pbf$-almost surely $\Qbf_X\in\Mc\left((hL)^\Delta\right)$, and thanks to \ref{it4PropLMPP} from Proposition \ref{propLMPP} $\Pbf$-almost surely $\Qbf_X(X_0=a)=1$ with $a=X_\tau\in S^\Delta\backslash U$ on $\{\tau^U<\infty\}$. By using the previous situation and by applying \eqref{eqLmExTCWP} we get that $\Pbf$-almost surely $\Qbf_X(\forall s\geq 0,~X_s=a)=1$, with $a=X_\tau\in S^\Delta\backslash U$ on $\{\tau^U<\infty\}$. Hence $\Pbf(X=X^{\tau^U})=1$.
\end{proof}
\begin{lemma}\label{lmMPtoSMP1}
Let $L$ be a subset of $\rmC_0(S)\times\rmC_0(S)$ such that the martingale problem associated to $L$ is well-posed. Then the martingale problem associated to $L^\Delta$ is well-posed if and only if $\Pbf(X=X^{\tau^S})=1$ for all $\Pbf\in\Mcons(L^\Delta)$ (in other words $\Pbf\in\Pcal(\Dloc(S))$) .
\end{lemma}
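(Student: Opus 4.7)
The plan is to prove both implications separately; the $(\Leftarrow)$ direction is essentially bookkeeping using Remark \ref{rkMPinCompactified}, while $(\Rightarrow)$ relies on quasi-continuity together with conditioning and the uniqueness hypothesis applied at the point $\Delta$.

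For $(\Leftarrow)$, I would verify existence and uniqueness in $\Mcons(L^\Delta)$ for each initial point $a\in S^\Delta$. For $a\in S$, the unique $\Pbf\in\Mc(L)$ supplied by well-posedness of the MP for $L$, viewed as a measure on $\D(S^\Delta)$, belongs to $\Mcons(L^\Delta)$ (using $f(\Delta)=g(\Delta)=0$ for $(f,g)\in L\subset\rmC_0(S)\times\rmC_0(S)$); for $a=\Delta$, the constant-$\Delta$ path is an obvious solution. For uniqueness, any two $\Pbf_1,\Pbf_2\in\Mcons(L^\Delta)$ with $\Pbf_i(X_0=a)=1$ are by hypothesis supported on paths stopped at $\Delta$, hence identifiable with elements of $\Mcons(L^\Delta)\cap\Dloc(S)\subset\Mc(L)$ by Remark \ref{rkMPinCompactified}; well-posedness of MP for $L$ then gives $\Pbf_1=\Pbf_2$ when $a\in S$, while for $a=\Delta$ the condition $\Pbf_i(X=X^{\tau^S})=1$ combined with $\tau^S=0$ forces both measures to be concentrated on the constant-$\Delta$ path.

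For $(\Rightarrow)$, fix $\Pbf\in\Mcons(L^\Delta)$. The goal is to show that after $\tau^S$ the canonical process is at $\Delta$ and remains there. First I would invoke part \ref{it4PropLMPP} of Proposition \ref{propLMPP}, applied on the compact space $S^\Delta$ with the subset $L^\Delta$ (whose domain is dense in $\rmC(S^\Delta)$), to obtain $\Pbf(X_{\tau^S-}=X_{\tau^S})=1$; since by the very definition of $\tau^S$ at least one of the two values equals $\Delta$ on $\{\tau^S<\infty\}$, both do, so $X_{\tau^S}=\Delta$ there. Then I would apply the conditioning Proposition \ref{propCondLMP} at the $(\Fc_{t+})_t$-stopping time $\tau^S$: the regular conditional distribution $\Qbf_X:=\law_\Pbf((X_{\tau^S+t})_{t\geq 0}\mid\Fc_{\tau^S+})$ lies in $\Mcons(L^\Delta)$ $\Pbf$-a.s., and on $\{\tau^S<\infty\}$ satisfies $\Qbf_X(X_0=\Delta)=1$. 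Well-posedness of the MP for $L^\Delta$ at $a=\Delta$ (with the trivial constant-$\Delta$ solution) then forces $\Qbf_X$ to be this trivial measure, yielding $X_t=\Delta$ for all $t\geq\tau^S$ $\Pbf$-a.s., and hence $\Pbf(X=X^{\tau^S})=1$.

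The main obstacle is the potential pathology, allowed a priori in $\D(S^\Delta)$, of paths that exit $S$ only via a left limit ($X_{\tau^S-}=\Delta$ while $X_{\tau^S}\in S$): such paths would prevent the direct ``restart at $\Delta$'' argument. Ruling them out is precisely the role of quasi-continuity, which requires the density of $\rmD(L^\Delta)$ in $\rmC(S^\Delta)$, equivalently of $\rmD(L)$ in $\rmC_0(S)$; this density, although not stated explicitly in the lemma, is already an implicit feature of the setting (via Theorem \ref{thmExMP} and the way the lemma is invoked inside the proof of Lemma \ref{lmMPtoSMP}). Once this technical input is secured, both directions are short.
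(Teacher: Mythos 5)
Your argument is sound, but it follows a genuinely different and heavier route than the paper's, and as written it proves the lemma only under the extra hypothesis that $\rmD(L)$ is dense in $\rmC_0(S)$. The paper's forward direction is a two-line argument needing neither quasi-continuity nor conditioning: by the first part of Remark \ref{rkMPinCompactified}, $\law_\Pbf(X^{\tau^S})\in\Mcons(L^\Delta)$, and since this law has the same initial distribution as $\Pbf$, uniqueness for $L^\Delta$ forces $\Pbf=\law_\Pbf(X^{\tau^S})$, which is exactly $\Pbf(X=X^{\tau^S})=1$. Your alternative — quasi-continuity on the compact space $S^\Delta$ to get $X_{\tau^S}=\Delta$ on $\{\tau^S<\infty\}$ (note that the relevant consequence of part \ref{it4PropLMPP} of Proposition \ref{propLMPP} is $\Pbf(\tau^U<\infty\Rightarrow X_{\tau^U}\notin U)=1$ applied with $U=S$, rather than the continuity at the random time $\tau^S$ you state), then Proposition \ref{propCondLMP} at $\tau^S$ and uniqueness at the absorbing point $\Delta$ — does work, but it is precisely what forces you to import the density of $\rmD(L^\Delta)$ in $\rmC(S^\Delta)$, which is not among the lemma's hypotheses. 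For the converse, your uniqueness argument coincides with the paper's (the inclusion $\Mcons(L^\Delta)\cap\Pcal(\Dloc(S))\subset\Mc(L)$ plus well-posedness for $L$, with the trivial case $a=\Delta$); the paper stops there, whereas you also treat existence, and that step again quietly needs density, for a reason slightly different from the pathology you name: an element of $\Mc(L)$ is a measure on $\Dloc(S)$, which is \emph{not} contained in $\D(S^\Delta)$ (explosion can occur with no left limit at $\xi$), so ``viewing it as a measure on $\D(S^\Delta)$'' already requires $\Pbf\big(\Dloc(S)\cap\D(S^\Delta)\big)=1$, i.e. quasi-continuity, hence density of $\rmD(L)$.

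So, measured against the literal statement, the density reliance is a gap, though one you flag yourself; it is indeed satisfied in the only place the lemma is invoked (inside the proof of Lemma \ref{lmMPtoSMP}, where $\rmD(L)$ is dense and $hL_0\subset\rmC_0(S)\times\rmC_0(S)$), and there your argument goes through. Still, the paper obtains the implication ``well-posed $\Rightarrow$ every $\Pbf\in\Mcons(L^\Delta)$ is stopped at $\tau^S$'' with no density assumption at all via the stopped-law trick, so you should either adopt that shortcut or state the density hypothesis explicitly; conversely, note that the paper's own converse only establishes uniqueness, the existence half being supplied separately where the lemma is used, so your existence discussion is an addition rather than a correction.
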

\begin{proof}
Assume that the martingale problem associated to $L^\Delta$ is well-posed and take $\Pbf\in\Mcons(L^\Delta)$. Then $\law_\Pbf(X^{\tau^S})\in\Mcons(L^\Delta)$, so by uniqueness of the solution $\Pbf=\law_\Pbf(X^{\tau^S})$ and so $\Pbf(X=X^{\tau^S}) = 1$. For the converse, 
let $\Pbf^1,\Pbf^2\in\Mcons(L^\Delta)$ be such that $\law_{\Pbf^1}(X_0)=\law_{\Pbf^2}(X_0)$. Then $\Pbf^1,\Pbf^2\in\Pcal(\Dloc(S))$ so $\Pbf^1,\Pbf^2\in\Mc(L)$, hence $\Pbf^1=\Pbf^2$.
\end{proof}
\begin{proof}[Proof of Lemma \ref{lmMPtoSMP}]
Let $L_0$ and $h_0$ be as in Lemma \ref{lmExTCWP} and take $h\in\rmC(S,\R_+)$ with $\{h\neq 0\}=U$ and $\sup_{a\in U} (h/h_0)(a)<\infty$. The existence of a solution for the martingale problem associated to $(hL_0)^\Delta$ is given 
by the existence of a solution for the martingale problem associated to $L$. Let $\Pbf^1,\Pbf^2\in\Mcons((hL_0)^\Delta)$ be such that $\law_{\Pbf^1}(X_0)=\law_{\Pbf^2}(X_0)$. Thanks to Lemma \ref{lmExTCWP} and Lemma \ref{lmMPtoSMP1}, for an open subset $V\Subset U$, there exist $k\in\rmC(S,\R_+^*)$ and a dense subset $L_1$ of $L_0$ such that $k(a)=h(a)$ for any $a\in V$, $kL_1\subset\rmC_0(S)\times\rmC_0(S)$ and the martingale problem associated to $(kL_1)^\Delta$ is well-posed.
Hence we may apply Theorem 6.1 p. 216 from \cite{EK86} and deduce that $\law_{\Pbf^1}(X^{\tau^V})=\law_{\Pbf^2}(X^{\tau^V})$. Letting $V$ 
growing toward $U$ we deduce that $\law_{\Pbf^1}(X^{\tau^U})=\law_{\Pbf^2}(X^{\tau^U})$ and so, since $\Pbf^i(X=X^{\tau^U})=1$ for $i\in\{1,2\}$, we conclude that $\Pbf^1=\Pbf^2$.
\end{proof}

\bibliographystyle{alpha}
\bibliography{Loc_Fel.bib}

\begin{thebibliography}{BSW13}

\bibitem[BS11]{BS11}
Bj\"orn B\"ottcher and Alexander Schnurr.
\newblock The {E}uler scheme for {F}eller processes.
\newblock {\em Stoch. Anal. Appl.}, 29(6):1045--1056, 2011.

\bibitem[BSW13]{BSW13}
Bj\"orn B\"ottcher, Ren\'e Schilling, and Jian Wang.
\newblock {\em L\'evy matters. {III}}, volume 2099 of {\em Lecture Notes in
  Mathematics}.
\newblock Springer, Cham, 2013.
\newblock L\'evy-type processes: construction, approximation and sample path
  properties, With a short biography of Paul L\'evy by Jean Jacod, L\'evy
  Matters.

\bibitem[EK86]{EK86}
Stewart~N. Ethier and Thomas~G. Kurtz.
\newblock {\em Markov processes}.
\newblock Wiley Series in Probability and Mathematical Statistics. John Wiley
  \& Sons, Inc., New York, 1986.
\newblock Characterization and convergence.

\bibitem[GH17a]{GH217}
Mihai Gradinaru and Tristan Haugomat.
\newblock L\'evy-type processes: convergence and discrete schemes.
\newblock {\em arXiv: 1707.02889}, July 2017.

\bibitem[GH17b]{GH017}
Mihai Gradinaru and Tristan Haugomat.
\newblock Local {S}korokhod topology on the space of cadlag processes.
\newblock {\em arXiv: 1706.03669}, June 2017.

\bibitem[Kal02]{Ka02}
Olav Kallenberg.
\newblock {\em Foundations of modern probability}.
\newblock Probability and its Applications. Springer-Verlag, New York, second
  edition, 2002.

\bibitem[Kur11]{Ku11}
Thomas~G. Kurtz.
\newblock Equivalence of stochastic equations and martingale problems.
\newblock In {\em Stochastic analysis 2010}, pages 113--130. Springer-Verlag,
  Berlin, Heidelberg, 2011.

\bibitem[Lum73]{Lu73}
Gunter Lumer.
\newblock Perturbation de g\'en\'erateurs infinit\'esimaux, du type
  ``changement de temps''.
\newblock {\em Ann. Inst. Fourier (Grenoble)}, 23(4):271--279, 1973.

\bibitem[Str75]{St75}
Daniel~W. Stroock.
\newblock Diffusion processes associated with {L}\'evy generators.
\newblock {\em Z. Wahrscheinlichkeitstheorie und Verw. Gebiete},
  32(3):209--244, 1975.

\bibitem[SV06]{SV06}
Daniel~W. Stroock and S.~R.~Srinivasa Varadhan.
\newblock {\em Multidimensional diffusion processes}.
\newblock Classics in Mathematics. Springer-Verlag, Berlin, 2006.
\newblock Reprint of the 1997 edition.

\bibitem[vC92]{vC92}
Jan~A. van Casteren.
\newblock On martingales and {F}eller semigroups.
\newblock {\em Results Math.}, 21(3-4):274--288, 1992.

\end{thebibliography}
\end{document}